\newtheorem{theorem}{Theorem}
\newtheorem{proposition}{Proposition}
\newtheorem{lemma}{Lemma}
\newtheorem{corollary}{Corollary}
\theoremstyle{definition}
\newtheorem{definition}{Definition}
\newtheorem{example}{Example}
\newtheorem{remark}{Remark}
\newtheorem{assumption}{Assumption}
\newtheorem{question}{Question}
\newcommand{\X}{\mathbb{X}}
\newcommand{\C}{\mathbb{C}}
\newcommand{\Z}{\mathbb{Z}}
\newcommand{\R}{\mathbb{R}}
\newcommand{\E}{\mathbb{E}}
\newcommand{\F}{\mathcal{F}}
\newcommand{\G}{\mathcal{G}}
\newcommand{\D}{\mathcal{D}}
\newcommand{\T}{\mathcal{T}}
\newcommand{\Fsep}{H}
\newcommand{\LFsep}{L}
\newcommand{\fsep}{h}
\newcommand{\intsep}{f}
\newcommand{\lfsep}{l}
\newcommand{\sdint}{\mathcal{S}^\internal}
\newcommand{\sdlint}{\mathcal{S}^\linternal}
\newcommand{\sd}{\mathcal{S}}
\newcommand{\sgn}{\mathrm{sgn}}
\newcommand{\ttop}{\mathrm{top}}
\newcommand{\htr}{\mathrm{htr}}
\newcommand{\internal}{\mathrm{I}}
\newcommand{\linternal}{\mathrm{LI}}
\newcommand{\Left}{\mathrm{Left}}
\newcommand{\Right}{\mathrm{Right}}
\newcommand{\treks}{\mathbf{T}}
\newcommand{\paths}{\mathbf{P}}
\newcommand{\stable}{\mathrm{stb}}
\newcommand{\full}{\mathrm{full}}
\newcommand{\diag}{\mathrm{diag}}
\newcommand{\res}{\mathrm{res}}
\newcommand{\pd}{\mathrm{PD}}
\newcommand{\rank}{\mathrm{rank}}
\newcommand{\pa}{\mathrm{pa}}
\titleformat{\title}{display}{}{}{}[]
\titleformat{\section}[runin]{\normalsize\bfseries}{\thesection.}{0.5em}{}[.]
\titleformat{\subsection}[runin]{\normalsize\itshape}{\thesubsection.}{0.3em}{}[.]
\providecommand{\keywords}[1]
{
  \small	
  \textbf{\textit{Keywords---}} #1
}
\title{\large\textbf{CAUSAL INFERENCE ON PROCESS GRAPHS \\ PART II} \\ \normalsize{\textbf{CAUSAL STRUCTURE AND EFFECT IDENTIFICATION}}}
\author[1]{\normalsize Nicolas-Domenic Reiter \thanks{nicolas-domenic.reiter@dlr.de}}
\author[2,1]{ \normalsize Jonas Wahl \thanks{wahl@tu-berlin.de}}
\author[1]{ \normalsize Andreas Gerhardus \thanks{andreas.gerhardus@dlr.de}}
\author[1,2,3]{\normalsize Jakob Runge \thanks{jakob.runge@tu-dresden.de}}
\affil[1]{\normalsize German Aerospace Center (DLR), Institute of Data Science, Jena, Germany}
\affil[2]{\normalsize Technische Universität Berlin, Berlin, Germany}
\affil[3]{\normalsize Center for Scalable Data Analytics and Artificial Intelligence (ScaDS.AI) Dresden/Leipzig, TU Dresden, Germany}
\date{}
\begin{document}

\maketitle
\begin{abstract}
 A structural vector autoregressive (SVAR) process is a linear causal model for variables that evolve over a discrete set of time points and between which there may be lagged and instantaneous effects. The qualitative causal structure of an SVAR process can be represented by its finite and directed process graph, in which a directed link connects two processes whenever there is a lagged or instantaneous effect between them. At the process graph level, the causal structure of SVAR processes is compactly parameterised in the frequency domain. In this paper, we consider the problem of causal discovery and causal effect estimation from the spectral density, the frequency domain analogue of the auto covariance, of the SVAR process. Causal discovery concerns the recovery of the process graph and causal effect estimation concerns the identification and estimation of causal effects in the frequency domain.
 We show that information about the process graph, in terms of $d$- and $t$-separation statements, can be identified by verifying algebraic constraints on the spectral density. Furthermore, we introduce a notion of rational identifiability for frequency causal effects that may be confounded by exogenous latent processes, and show that the recent graphical latent factor half-trek criterion can be used on the process graph to assess whether a given (confounded) effect can be identified by rational operations on the entries of the spectral density.  
\end{abstract}
\keywords{Causal inference; SVAR processees; spectral density; identifibality}
\section{Introduction}
Questions such as \textquote{what are the causes of a given phenomenon?} or \textquote{how strong is the effect of one phenomenon on another?} are asked and studied in fields as diverse as climate science \cite{runge2023causal, MemoryMattersACaseforGrangerCausalityinClimateVariabilityStudies, ebert2012causal}, social science and economics \cite{bernanke1995blackbox, imbens2024causal}, and neuroscience \cite{Seth3293, siddiqi2022causal}. Causal inference \citep{pearl2009causality, peters2017elements} formalises these questions mathematically so that they can be analysed at an abstract level. 

Causal models are the mathematical object with which to formalise causal questions. It consists of a (causal) graph and a compatible stochastic functional model, commonly called \emph{Structural Causal Model (SCM)} \cite{pearl2009causality, peters2017elements}. The vertices on the causal graph represent the entities among which causal relationships are to be modeled and a directed edge between two vertices indicates an immediate cause-effect relation between the corresponding entities. The functional model assigns a stochastic object to each vertex on the causal graph, along with a set of functions that specify how each stochastic object is affected by its causes. One goal in causal inference is to understand the conditions on the causal model under which certain aspects of the model can be reconstructed from observational data. This goal splits into two main research directions of which the first is known as causal structure identification and the second as causal effect identification. Causal structure identification concerns the reconstruction of the causal graph from observational data, whereas causal effect identification is about extracting information about the functional model from observations, given at least partial knowledge of the causal graph. 

Linear Gaussian \emph{structural causal models (SCM)} \cite{bollen1989structural} are an important and extensively researched class of causal models. A linear Gaussian SCM assigns a Gaussian noise term to every vertex and a coefficient to every edge on the causal graph. The observational distribution of a linear Gaussian SCM model is a multivariate Gaussian distribution. Given that a Gaussian distribution is uniquely characterised by its mean and covariance matrix, the covariance represents the observational information from which one seeks to recover the causal model. Since the entries in the covariance are rational functions of the parameters of the linear model, this problem is effectively approached from an algebraic point of view \cite{drton2018algebraic, drton2009lectures}. From this point of view, causal structure identification is about expressing information of the causal graph in terms of algebraic relations between the entries in the covariance matrix. Results in this fashion include the algebraic characterisation of $d$- and $t$-separation in the underlying causal graph \cite{pearl2009causality, pearl1990independence, peters2017elements} resp. \cite{sullivant2010trek}.
Similarly, causal effect identification in linear Gaussian SCM's specialises to the problem of recovering the parameters of the causal model from the covariance matrix. The identification of causal effects is an interesting problem when some of the variables are hidden, i.e. not directly included in the covariance matrix. Algebraically, this means that the causal model for the observed variables must be recovered from a submatrix of the covariance. The structure of the causal graph (including the hidden variables) imposes algebraic relations between the entries in the covariance, and these relations allow or prevent certain aspects of the observable causal model to be recovered. In this regard, several graphical criteria have been identified for deciding whether, in a given causal graph, a specific causal effect can be identified from the covariance for almost all linear Gaussian SCMs compatible with the graph \cite{drton2011global, 10.1214/22-AOS2221, foygel2012half, weihs2018determinantal}. In order to apply these criteria to identify causal effects from observational data it is necessary that the observations are identically distributed and independent of each other.

However, in many practical applications, observations are time-dependent \cite{runge2023causal, FRISTON2013172, runge2019inferring, runge2019detecting, MemoryMattersACaseforGrangerCausalityinClimateVariabilityStudies}. In such a situation it is necessary to have a causal model that takes time into account. A simplified way of incorporating temporal dependencies into the causal model is to assume that the system being modelled evolves over a discrete set of time points. The causal structure in such systems can be represented graphically by drawing a vertex for each process at every time point, so that a directed edge between vertices expresses a delayed or simultaneous effect. This construction yields an infinite causal graph. Graphs of this particular type are well established in the literature \cite{dahlhaus2003causality, peters2017elements, gerhardus2024characterization, runge2019inferring} and are, among other names, often referred to as time series graphs. The infinite time series graph is reduced to a finite graph by projecting away the time coordinate \cite{peters2017elements, runge2019inferring, assad2022timeseries}. In this paper, we call this reduced graph the process graph of the time series graph. 

Structural vector autoregressive (SVAR) processes are theoretically well understood \cite{lutkepohl2005new, brockwell2009time} and widely used across scientific disciplines to model dynamically interacting quantities \cite{FRISTON2013172, Seth3293}. SVAR processes can be seen as a linear parameterisation of an appropriate causal time series graph. This leads to the question of how to systematically formalise the questions of causal structure identification and causal effect identification for linear causal time series models. The effectiveness and clarity of the algebraic perspective on causal inference for linear causal models with iid observations motivates the search for a similar formulation for linear time series models. 
The time series graph, together with its linear SVAR parameterisation, appears to be the natural choice for a causal model to be identified from observations \cite{gerhardus2024characterization, runge2019inferring, runge2019detecting, runge2020discovering, peters2017elements, gerhardus2020high, thams2022instrumental, mogensen2022instrumental, gerhardus2023projecting}. Using or making assumptions about the time series graph can be useful in recovering causal information from observations, e.g. cyclic relationships between processes could be resolved. However, for a given number of processes, the infinite time series graph can be arbitrarily complex. The corresponding process graph, on the contrast, is a finite graph, so its complexity is limited by the number of processes to be modelled. From a combinatorial point of view, it would therefore be attractive to use only the process graph instead of the time series graph when analysing the causal structure \cite{granger1969investigating, geweke1982measurement, geweke1984measures, eichler2010granger, eichler2010graphical}. 

In the first part of this two-part paper series \cite{reiter2023formalising}, we established that SVAR processes can be represented as linear causal models of stochastic processes on the finite process graph \cite{reiter2023formalising}, and we termed this reformulation the \emph{structural equation process (SEP)} representation. In this process representation, each vertex is associated with a stochastic process called the internal dynamics, which is similar to the Gaussian noise term in a classical SCM, and each edge on the process graph is associated with a filter. This linear causal model of stochastic processes can be equivalently represented in the frequency domain, where causal effects can be computed by means of a generalised path rule. The SEP representation is also relevant to applications because some questions are more naturally formulated in the frequency domain. In another related work \cite{reiter2024asymptotic}, we analyse statistical properties of causal effects in the frequency domain and illustrate the developed methods with a an example from the climate sciences, the effect of the 10-11 year solar cycle on the North Atlantic Oscillation (NAO). 

In this paper, we use the frequency representation of an SVAR process to give an algebraic formulation of the causal structure and causal effect identification problem in the frequency domain. The structure of this paper is as follows. In Section \ref{sec: preliminaries}, we recall the necessary concepts on SVAR processes as causal models for dynamically interacting variables. The object from which we seek to identify the frequency representation of the causal model, i.e. the process graph together with the Fourier transformed link filters, is the spectral density or spectrum of the process. In Section \ref{sec: frequnency domain parameterisation}, for a fixed time series graph, we parameterise the spectrum of a SVAR process as a matrix over the field of rational functions with real coefficients. Based on this parameterization we develop the main results of this paper.

\textsc{Causal structure identification}. Section \ref{sec: structure identifiability} concerns the (partial) identification of the process graph. Our first result concerns $d$-separation on the process graph. The following is an informal version of this result.   
\begin{theorem}[$d$-separation of processes]
    Let $G=(V,D)$ be an acyclic process graph, and let $X,Y,Z \subset V$ be three disjoint sets of process indices. For a generic SVAR process $\X$ compatible with $G$, it holds that the subprocesses $\X_X$ and $\X_Y$ are conditionally independent given $\X_Z$ if and only if $X$ and $Y$ are $d$ separated by $Z$. In particular, the Markov equivalence class of $G$ is generically identifiable from the spectrum of the process $\X$. 
\end{theorem}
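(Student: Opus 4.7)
The strategy is to reduce process-level conditional independence to an algebraic statement about the spectral density, and then to apply the classical correspondence between $d$-separation and covariance vanishing in linear Gaussian SCMs pointwise in the frequency domain, before promoting the resulting pointwise identity to a statement about rational functions in the model parameters.

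First, I would invoke the standard fact for stationary Gaussian processes that $\X_X \perp \X_Y \mid \X_Z$ is equivalent to the conditional spectral density
\[
S_{XY \mid Z}(\omega) \;:=\; S_{XY}(\omega) - S_{XZ}(\omega)\, S_{ZZ}(\omega)^{-1} S_{ZY}(\omega)
\]
vanishing for (Lebesgue-)almost every $\omega$. Because Section~\ref{sec: frequnency domain parameterisation} represents $S$ as a matrix over the field of rational functions in $z=e^{-i\omega}$ whose coefficients are rational in the link-filter and internal-dynamics parameters, ``vanishing almost everywhere in $\omega$'' is equivalent to ``vanishing identically as a rational function,'' once the finitely many frequencies at which $S_{ZZ}(\omega)$ becomes singular are removed.

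Second, for the \emph{if} direction, I would fix a regular frequency $\omega$ and observe that $S(\omega)$ has exactly the algebraic structure of the covariance matrix of a (complex-valued) linear Gaussian SCM on the acyclic graph $G$, with edge weights given by the Fourier-transformed link filters at $\omega$ and noise variances given by the internal-dynamics spectra at $\omega$. The classical result that $d$-separation in an acyclic linear Gaussian SCM forces the corresponding conditional covariance entries to vanish then gives $S_{XY \mid Z}(\omega)=0$ at every regular $\omega$, hence as a rational function, hence $\X_X \perp \X_Y \mid \X_Z$ by Step~1. For the \emph{only if} direction, I would argue contrapositively and generically: if $X$ and $Y$ are not $d$-separated by $Z$, the path/trek rule for the rational spectral density from Part~I expresses the entries of $S_{XY \mid Z}$ as sums over ``open'' treks in $G$, each contributing a distinct rational monomial in the link-filter parameters. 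A perturbation argument (choose filters that realise a generic real Gaussian SCM on $G$, for which the conditional covariance is well-known to be nonzero on a Zariski-open parameter set) shows that these monomials cannot cancel identically, so $S_{XY \mid Z}$ is a nonzero element of the rational function field for generic parameters, and $\X_X \perp \X_Y \mid \X_Z$ fails. Markov-equivalence-class identifiability follows at once, since Markov equivalence is characterised by the family of $d$-separation statements.

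\emph{Main obstacle.} The delicate step is the cancellation analysis in the only-if direction: unlike the unconditional case, where only treks from $X$ to $Y$ contribute, the inverse $S_{ZZ}(\omega)^{-1}$ introduces terms indexed by walks through $Z$, and one must verify that contributions of $d$-connecting paths are not cancelled by $d$-blocked ones on a Zariski-dense set of parameter choices. I would handle this by transferring the finite-dimensional trek-separation/genericity argument from classical linear Gaussian SCM theory to the spectral setting, using that a generic choice of link filters realises a Zariski-generic choice of edge coefficients in the associated ``frozen'' Gaussian SCM at almost every frequency. A minor bookkeeping issue is to confirm that the singular set of $S_{ZZ}(\omega)$ is a finite subset of the unit circle, so that the ``almost-everywhere'' equivalence in Step~1 is unaffected.
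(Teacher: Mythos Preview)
Your approach is valid in outline but takes a genuinely different route from the paper. The paper never freezes a frequency and appeals to classical linear-SCM faithfulness. Instead it (i) rewrites process-level conditional independence as a rank condition, namely $\X_X \perp \X_Y \mid \X_Z$ iff $\rank([\sd]_{X\cup Z,\,Y\cup Z})=|Z|$ as a matrix over $\R(z)$; (ii) invokes the spectral trek-separation theorem (Theorem~\ref{thm: trek separation}, itself proved through a spectral Gessel--Viennot lemma), which identifies the generic rank of any subspectrum $[\sd]_{A,B}$ with the minimum $|C_A|+|C_B|$ over $t$-separating pairs; and (iii) applies the purely graphical Lemma~\ref{lemma: d-t-separation} that $Z$ $d$-separates $X$ and $Y$ iff $Z$ partitions into a pair $t$-separating $X\cup Z$ from $Y\cup Z$. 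Both directions then follow from one rank computation, with no separate cancellation analysis and no pointwise argument in $\omega$.

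What your route buys is self-containment: the \emph{if} direction is immediate, and you avoid developing the Gessel--Viennot and trek-separation machinery. Its cost sits exactly where you flag it. To make the \emph{only-if} direction rigorous you must argue, for \emph{each} time series graph $\G\in\T(G)$, that the evaluation $(\Phi,\Omega)\mapsto\bigl(\fsep_{v,w}(z_0),\,\sdint_v(z_0)\bigr)$ at some fixed $z_0\in S^1$ hits a Zariski-generic point of the classical SCM parameter space; otherwise the ``frozen'' faithfulness result does not transfer back. This is true (e.g.\ at $z_0=1$ one can vary the numerators $\varphi_{v,w}$ and the $\omega_v$ freely), but it is a verification the paper's rank-plus-$t$-separation argument does not need: there, genericity for every $\G$ is inherited directly from the spectral Gessel--Viennot lemma.
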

Our second result generalises the main result in \cite{sullivant2010trek} to SVAR processes. An informal formulation of this result reads as follows.
\begin{theorem}[$t$-separation of processes]
    Let $G=(V,D)$ be a process graph and $X,Y \subset V$ be two sets of processes (indices). For a generic SVAR process compatible with $G$, the rank of the subspectrum $\sd_{X,Y}$ (as a matrix over the field of rational functions) is equal to the minimal set of processes that trek separate $X$ and $Y$ on $G$. 
\end{theorem}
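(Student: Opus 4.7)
The plan is to adapt the classical trek-separation theorem of Sullivant, Talaska and Draisma to the frequency domain, leveraging the generalized path rule for the SEP representation developed in Part I. By that path rule, the spectrum factorizes as $\sd(z)=\Phi(z)\Omega(z)\Phi(z)^{*}$, where $\Phi(z)=(I-F(z))^{-1}$ is the transfer matrix whose $(i,j)$ entry sums directed-path products of link filters $\fsep_{e}(z)$ on $G$, and $\Omega(z)$ is the internal-dynamics spectrum. Entrywise, $\sd_{ij}(z)=\sum_{\tau}\lp(\tau_{\Left},z)\,\omega_{\ttop(\tau)}(z)\,\overline{\lp(\tau_{\Right},z)}$, the sum ranging over treks $\tau=(\tau_{\Left},\tau_{\Right})$ from $i$ to $j$ on the process graph. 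This trek rule is the algebraic object on which the entire argument rests.

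For the upper bound $\rank\sd_{X,Y}\le|L|$, I would fix any trek separator $L=L_{\Left}\sqcup L_{\Right}$ and exploit the fact that every trek from $X$ to $Y$ crosses $L_{\Left}$ on its left leg or $L_{\Right}$ on its right leg. Splitting the trek rule accordingly yields a decomposition $\sd_{X,Y}(z)=M_{\Left}(z)+M_{\Right}(z)$ in which $M_{\Left}$ factors through a rational matrix of inner dimension $|L_{\Left}|$ and $M_{\Right}$ through one of inner dimension $|L_{\Right}|$, bounding the rank over $\R(z)$ by $|L_{\Left}|+|L_{\Right}|=|L|$. Minimizing over trek separators gives the desired inequality.

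For the matching lower bound, I would use a Menger-type duality (essentially Sullivant–Talaska–Draisma's max-flow argument on the ``trek graph'') to exhibit a system $\T$ of $k=\min|L|$ pairwise vertex-disjoint treks between subsets $X'\subseteq X$ and $Y'\subseteq Y$, and then apply a Lindström–Gessel–Viennot expansion of the $k\times k$ minor $\det\sd_{X',Y'}(z)$. Each term of this expansion is a rational function in $z$ whose coefficient is a monomial in the SVAR parameters (link-filter coefficients and internal-dynamics parameters). I would argue that the monomial contributed by $\T$ cannot be produced by any competing trek system, so the expansion does not cancel identically. Concretely, on a suitable Zariski-open subset of SVAR parameter space the numerator of this minor is a non-zero polynomial, establishing that $\rank\sd_{X,Y}\ge k$ generically.

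The main obstacle is the non-cancellation claim in the Lindström–Gessel–Viennot expansion, especially because the process graph may contain directed cycles: the entries of $\Phi(z)=(I-F(z))^{-1}$ are then genuine rational functions in the SVAR parameters rather than finite polynomials, and the ``path'' sum over a single trek already contains infinitely many terms. I would handle this by either (i) a rational-field genericity argument, clearing denominators and extracting a leading monomial coefficient witnessed by the disjoint trek system $\T$ that no other configuration can cancel, using algebraic independence of the families $\{\fsep_{e}(z)\}_{e\in D}$ and $\{\omega_{v}(z)\}_{v\in V}$ over the SVAR parameter ring, or (ii) an unrolling argument that reduces to the acyclic case by passing to a finite truncation and then using lower semicontinuity of rank to conclude. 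The rest of the proof is routine bookkeeping once this non-cancellation is in place.
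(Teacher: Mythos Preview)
Your approach is essentially the same as the paper's: both rest on a spectral Gessel--Viennot lemma for the non-cancellation step, a Cauchy--Binet expansion of subdeterminants of $\sd$, and then the Menger/max-flow--min-cut argument of Sullivant--Talaska--Draisma verbatim. The paper's own proof is in fact a one-line pointer to \cite{sullivant2010trek}, with the substantive new work deferred to the spectral Gessel--Viennot subsection (Proposition~\ref{prop: spectral gessel viennot lemma} and Proposition~\ref{prop: invertible subspectrum}).

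One point worth flagging: the formal version of this theorem in the paper (Theorem~\ref{thm: trek separation}) assumes the process graph $G$ is \emph{acyclic}, a hypothesis omitted from the informal statement you were given. Under acyclicity the path sums in $(I-\Fsep)^{-1}$ are finite polynomials in the link functions, and the paper's non-cancellation argument proceeds by a clean leading-monomial extraction in $\R[\Phi,z]$ (see the second half of the proof of Proposition~\ref{prop: spectral gessel viennot lemma}). Your extended discussion of handling directed cycles via rational-field genericity or unrolling is therefore beyond what the paper actually claims and proves; the paper sidesteps the obstacle entirely by hypothesis. Of the two options you sketch for the cyclic case, neither is worked out, and the unrolling route (ii) in particular would need care: lower semicontinuity of rank goes the wrong way for a lower bound under truncation, so you would need an additional argument that a non-vanishing minor in the truncated model survives in the limit.
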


\textsc{Causal effect identification}. Section \ref{sec: rational identifiability} is devoted to the identification of causal effects in the frequency domain. If a causal effect in the frequency domain can generically be expressed through addition/ subtraction and multiplication/division of the entries in the spectrum, then we call that effect rationally identifiable. In this paper, we make this notion more precise and, in so doing, generalise the rational identifiability term that is known for linear Gaussian SCMs. We will conclude this section by showing that the recent latent factor half-trek criterion for linear Gaussian SCMs \cite{10.1214/22-AOS2221} can be applied to the process graph to decide whether a given frequency domain causal effect is rationally identifiable. However, we need to require that the process graph is acyclic, while \cite{10.1214/22-AOS2221} allows also for cyclic graphs. 

\section{Preliminaries}\label{sec: preliminaries}
Let $V$ be a finite set of process indices and $\Z$ the set of integers. A \emph{stationary time series graph} is an infinite graph $\mathcal{G} = (V \times \Z, \mathcal{D})$ such that the set of directed edges $\D$ satisfies the following requirement: If there is a directed edge $(v(t-k), w(t)) \in \D$, then it must hold that $k \geq 0$ and $\{(v(s-k), w(s))\mid s \in \Z \} \subset \D$. In this case we say that $v$ is causing $w$ at lag $k$, which we abbreviate with the notation $v\to_k w$. If the lag $k=0$, then $v\to_0 w$ represents a \emph{direct contemporaneous effect}. The requirement $k \geq0$ makes sure that a cause cannot not happen after its effect.
The \emph{order} $p$ of the time series graph $\G$ is the maximum time lag of a link on $\G$, i.e. $p \coloneqq \max_{v,w\in V}\{k\mid v \to_k w \}$.

The time series graph encodes how the states of a dynamically evolving system of processes causally influence each other at and across discrete time points, i.e. through contemporaneous and lagged effects. Collapsing the infinite time series graph $\G$ along the time axis yields its finite \emph{process graph} $G=(V,D)$. This graph contains a directed edge $(v,w)\in D$, which we abbreviate as $v \to w$, if and only if there is $k \geq 0$ such that $v \to_k w$ is a contemporaneous or lagged causal relation on the time series graph $\G$. The parent set of some process $v\in V$ is denoted by the symbol $\pa(v)$ and consists of those processes $u$ that are connected to $v$ by a direct link, i.e. $u \to v$. For a directed graph $G=(V,D)$ we denote by $\T(G)$ the set of all time series graphs that have process graph $G$.

\begin{figure}
    \centering
    \includegraphics[width=0.7\textwidth]{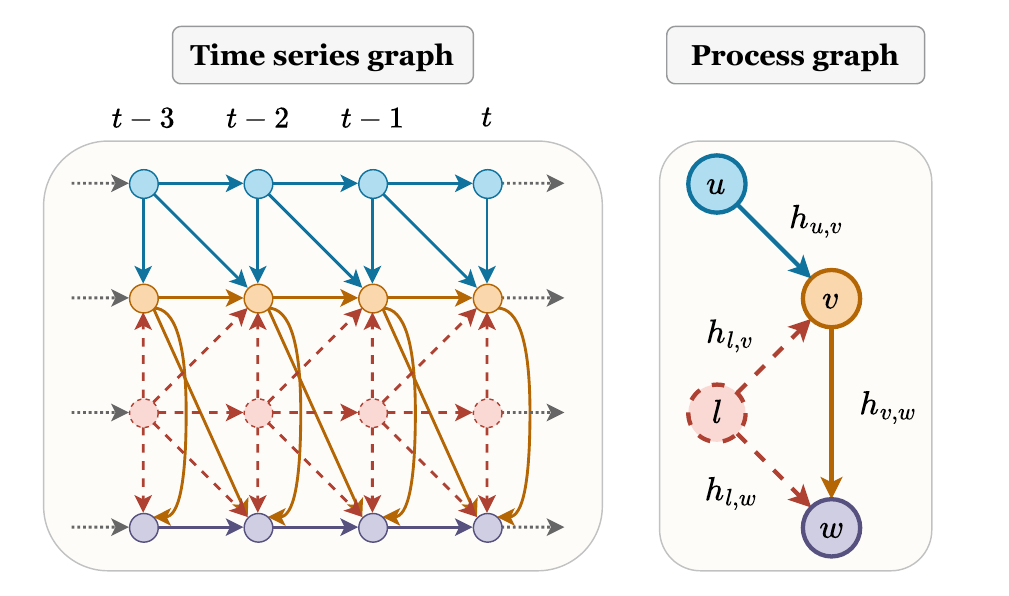}
    \caption{The figure on the left shows an excerpt of an order one time series graph for the processes $V=\{u,v,h,w \}$. The graph on the right displays the associated process graph.}
    \label{fig:tsg_example}
\end{figure}

A stochastic process $\X = (\X_v(t))_{v\in V, t \in \Z}$ is said to be a \emph{structural vector autoregressive (SVAR) process} compatible with a time series graph $\G$ if there exists a collection of coefficients $\Phi = (\phi_{v,w}(k))_{v \to_k w}$ indexed by the contemporaneous and lagged relations in $\D$, such that for every $v\in V$ and every $t \in \Z$ the associated random variable satisfies the structural equation 
\begin{align}
    \X_v(t) &= \sum_{u \in pa(v)} \sum_{k: u \to_k v} \phi_{u,v}(k) \X_u(t-k) + \eta_v(t), 
\end{align}
where $\eta = (\eta_v(t))_{v \in V, t \in \Z}$ is a mutually independent zero mean Gaussian white noise process, i.e. all $\eta_v(t)$ are independent zero mean normally distributed random variables. In what follows we denote by $\Omega=\diag((\omega_v)_{v\in V})$ the positive diagonal covariance matrix of the random vector $\eta(t)$. In particular, the variance does not depend on the time index $t$. Furthermore, we require the coefficients $\Phi$ to be \emph{stable}, see \cite{lutkepohl2005new}. 
\subsection{The frequency representation of a SVAR process}
In the previous section we interpreted an SVAR process as a parametrised model for an appropriate time series graph $\G$. In \cite{reiter2023formalising}, we show that an SVAR process can equivalently be written as a structural equation process on the process graph. This model contains a linear filter $\lambda_{v,w} = \left( \lambda_{v,w}(s) \right)_{s \in \Z }$ for each link $v \to w$ on the process a graph, and for every $v \in V$ a univariate process $\X^\internal_v = \left( \X_v^\internal(t) \right)_{t \in \Z}$ called the internal dynamics of $v$. These filters and processes are such that the internal dynamics are mutually independent, and for every $v \in V$ it holds that 
\begin{align}
    \X_v &= \sum_{u \in pa(v)} \lambda_{u,v} \ast \X_u + \X_v^{\internal}, 
\end{align}
where $\ast$ denotes convolution of sequences. If we arrange the filters $(\lambda_{u,v})_{u \to v}$ into a matrix valued filter $\Lambda$, then we obtain the following system of linear equations for the stochastic processes: 
\begin{equation} \label{eq: time SEP formulation}
    \begin{split}
        \X & = \Lambda^\top \ast \X  + \X^\internal \\
    & = (\sum_{k \geq 0} \Lambda^k)^\top \ast \X^\internal
    \end{split}
\end{equation}
This representation exists under certain conditions on $\Phi$, as detailed in \cite{reiter2023formalising}. 

The contemporaneous and lagged covariance information in the processes $\X$ resp. $\X^\internal$ are captured in their \emph{Auto Covariance Sequence (ACS)}, which we denote by $\Sigma = (\Sigma(k))_{k \in \Z}$ resp. $\Sigma^\internal = (\Sigma^\internal(k))_{k \in \Z}$. For $k \in \Z$ the corresponding entries in the ACS are as follows
\begin{align*}
    \Sigma(k) &\coloneqq \E[\X(t) \X(t-k)^\top] & \Sigma^\internal(k) &\coloneqq \E[\X^\internal(t) \X^\internal(t-k)].
\end{align*}
Since the internal dynamics are mutually independent it holds that $\Sigma^\internal_{v,w}(k) = 0 $ for every $k \in \Z$ if $v \neq w$. The structural process formulation (\ref{eq: time SEP formulation}) yields the following expression for the ACS \cite{reiter2023formalising}: 
\begin{align}\label{eq: acs parameterisation}
    \Sigma &= (\sum_{k\geq0}\Lambda^k)^\top \ast \Sigma^\internal \hat{\ast} (\sum_{k \geq 0} \Lambda^k)
\end{align}

Every stationary stochastic process admits an equivalent description in the frequency domain \cite{brockwell2009time}, and it turns out that the frequency domain is a convenient perspective to reason about the causal structure of SVAR process at the level of its process graph. The frequency domain analogue of the ACS is the \emph{spectral density} $\sd \coloneqq \F(\Sigma)$, where $\F$ is the Fourier transformation. Recall that the Fourier transform maps sequences to complex valued functions that are defined on the complex unit circle $S^1 \subset \C$. Elementary properties of the Fourier transformation together with Equation (\ref{eq: acs parameterisation}) give the identity
\begin{align} \label{eq: structured spectrum}
    \sd &= (I - \Fsep)^{-\top} \sdint (I - \Fsep^\ast)^{-1},
\end{align}
where $\Fsep = \F(\Lambda)$ is the \emph{direct transfer function}, as shown in \cite{reiter2023formalising}. The entries in the direct transfer function are rational functions parameterised by the SVAR coefficients $\Phi$. To write down this parameterisation, let us define for a pair of process indices $x,y \in V$ the associated \emph{lag  polynomial}
\begin{align*}
    \varphi_{x,y}(z) &\coloneqq \sum_{k : x \to_k y} \phi_{x,y}(k) z^k .
\end{align*}
Suppose $v \to w$ is a direct link on the process graph, then its associated \emph{link function} \cite{reiter2023formalising} evaluated at $z\in S^{1}$ is
\begin{align}\label{eq: transfer function}
    \fsep_{v,w}(z) &\coloneqq \F(\lambda_{v,w})(z) = \frac{\varphi_{v,w}(z)}{1 - \varphi_{w,w}(z)}. 
\end{align}
Also the entries in the spectral density of the internal dynamics $\sdint \coloneqq \F(\Sigma^\internal)$ evaluated at a $z \in S^1$ admit a direct parameterisation in terms of the SVAR coefficients. Specifically, the spectral density of the internal dynamics of $v \in V$ is parameterised by
\begin{equation}\label{eq: parameterisation internal spectrum}
    \sdint_v(z) = \frac{\omega_v}{(1- \varphi_{v,v}(z))(1- \varphi_{v,v}(z^\ast))},
\end{equation}
where $\omega_v$ is the variance of the white noise process $\eta_v$, see \cite{reiter2023formalising} .
The pairwise independence of the internal dynamics implies that $\sdint_{v,w}(z) = 0$ for every $z \in S^1$ if $v \neq w$.

\begin{example}
    Considering the example shown in Figure \ref{fig:tsg_example}, we illustrate the computation of the direct link function for the link $u \to v$ on the process graph. The coefficients $\phi_{u,v}(0)$ and $\phi_{u,v}(1)$ parameterise the contemporaneous and lag-one effect of $u$ on $v$. Similarly, the coefficient $\phi_{v,v}(1)$ quantifies the effect with which the process $v$ carries information from one point in time to the next. So, according to equation (\ref{eq: transfer function}), the transfer function associated with the edge $u\to v$ is computed as follows
    \begin{align*}
        \fsep_{u,v}(z) &= \frac{\phi_{u,v}(0) + \phi_{u,v}(1)z}{1 - \phi_{v,v}(1)z}
    \end{align*}
    Regarding the internal spectrum of $v$, we use equation (\ref{eq: parameterisation internal spectrum}) to compute  
    \begin{align*}
        \sdint_u(z) &= \frac{\omega_u}{1 - 2\phi_{u,u}(1)\mathrm{Re}(z) + \phi_{u,u}(1)^2},
    \end{align*}
    where $z = \mathrm{Re}(z) + i\mathrm{Im}(z) \in S^1$.  
\end{example}

\subsection{The spectral trek rule}
The algebraic relations between the entries in the spectrum are constrained by the process graph $G$. In the following, we recall how the process graph structures the spectrum using the \emph{spectral trek rule} \cite{reiter2023formalising}, a generalisation of the classical trek rule \cite{Wright1921CorrelationAndCausation, wright1934method}.
\begin{definition}
    A \emph{directed path} $\pi$ on $G$ is an ordered sequence of links $\pi = v_1 \to \cdots \to v_n$. The node $v_1$ is called the starting point of $\pi$, and the node $v_n$ is called the endpoint of $\pi$. A path $\pi'$ is said to be a \emph{sub-path} of $\pi$, denoted as $\pi' \leq \pi$, if there are $1 \leq k \leq m \leq n$ such that $\pi'= v_k \to \cdots \to v_m$. The empty path at $v\in V$ is the directed path given by the sequence $(v)$. If $v,w \in V$ are process indices, then $\mathbf{P}(v,w)$ refers to the set of directed paths that start at $v$ and end at $w$. 
    
    A \emph{trek} from $v$ to $w$ is a triple $\tau=(\Left(\tau), \ttop(\tau),  \Right(\tau))$, where $\ttop(\tau)\in V$ is a process index, and $\Left(\tau)\in \mathbf{P
    }(\ttop(\tau),v)$ and $\Right(\tau)\in \mathbf{P}(\ttop(\tau), w)$ are directed paths. The set of all treks from $v$ to $w$ is denoted as $\treks(v,w)$.  
\end{definition}
Suppose $\pi = (v_i)_{1 \leq i \leq n}$ is a directed path on $G$, then its associated \emph{path function} \cite{reiter2023formalising} is the point-wise product of the associated link functions 
\begin{align*}
    \fsep^{(\pi)} & \coloneqq \prod_{i=1}^{n-1}\fsep_{v_i, v_{i+1}}.
\end{align*}
Note that the path function of the empty path is the constant function.
The \emph{trek function} \cite{reiter2023formalising} of a trek $\tau$ is the point-wise product of functions
\begin{align*}
    \sd^{(\tau)} &\coloneqq \fsep^{(\Left(\tau)))} \sdint_{\ttop(\tau)} (\fsep^{(\Right(\tau)})^\ast.
\end{align*}
In the following, we recall how the entries in the spectral density can be computed in terms of trek functions. 
\begin{proposition}[Spectral trek rule \cite{reiter2023formalising}] \label{prop: trek rule}
    Suppose $v, w\in V$  are process indices on the process graph $G$, then the entry $\sd_{v,w}$ is equal to the sum of the trek functions that are associated with the treks from $v$ to $w$, i.e
    \begin{align*}
        \sd_{v,w} &= \sum_{\tau \in \treks(v,w)} \sd^{(\tau)}.
    \end{align*}
\end{proposition}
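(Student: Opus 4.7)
\medskip

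\textbf{Proof plan.} My plan is to prove the spectral trek rule by expanding the structured-spectrum identity~\eqref{eq: structured spectrum} into a Neumann series and then matching the resulting matrix products with walks on the process graph. First I would use the stability assumption on $\Phi$ (which, on the Fourier side, translates into the spectral radius of $\Fsep(z)$ being strictly less than one for every $z\in S^{1}$) to write
\begin{equation*}
    (I - \Fsep)^{-1} \;=\; \sum_{k\geq 0} \Fsep^{k},
\end{equation*}
pointwise on $S^{1}$, and similarly for $(I-\Fsep^{\ast})^{-1}$. Substituting into~\eqref{eq: structured spectrum} and using that $\sdint$ is diagonal (because the internal dynamics are mutually independent), I obtain
\begin{equation*}
    \sd_{v,w} \;=\; \sum_{s \in V} \Bigl(\sum_{k\geq 0} \Fsep^{k}\Bigr)_{s,v} \; \sdint_{s} \; \Bigl(\sum_{\ell \geq 0} \Fsep^{\ell}\Bigr)^{\!\ast}_{s,w}.
\end{equation*}

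The key combinatorial step is to identify $(\Fsep^{k})_{s,v}$ with a sum over directed paths. Since the $(x,y)$-entry of $\Fsep$ equals $\fsep_{x,y}$ when $x\to y$ is a link of $G$ and vanishes otherwise, unfolding the matrix product gives
\begin{equation*}
    (\Fsep^{k})_{s,v} \;=\; \sum_{s = v_{0}\to v_{1}\to\cdots\to v_{k} = v} \;\prod_{i=0}^{k-1}\fsep_{v_{i},v_{i+1}} \;=\; \sum_{\pi \in \paths(s,v),\,|\pi|=k} \fsep^{(\pi)},
\end{equation*}
where the sum runs over directed paths of length $k$ on $G$ (and the $k=0$ term contributes the empty path at $v$ precisely when $s=v$, with value $1$). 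Summing over $k$ yields $\bigl((I-\Fsep)^{-1}\bigr)_{s,v} = \sum_{\pi \in \paths(s,v)} \fsep^{(\pi)}$, and analogously $\bigl((I-\Fsep^{\ast})^{-1}\bigr)_{s,w} = \sum_{\pi' \in \paths(s,w)}(\fsep^{(\pi')})^{\ast}$.

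Plugging these two path expansions back in and distributing the product, I arrive at
\begin{equation*}
    \sd_{v,w} \;=\; \sum_{s \in V}\;\sum_{\pi_{L}\in\paths(s,v)}\;\sum_{\pi_{R}\in\paths(s,w)} \fsep^{(\pi_{L})}\, \sdint_{s}\, (\fsep^{(\pi_{R})})^{\ast}.
\end{equation*}
By the definition of a trek, each triple $(s,\pi_{L},\pi_{R})$ with $\pi_{L}\in\paths(s,v)$ and $\pi_{R}\in\paths(s,w)$ is exactly an element $\tau\in\treks(v,w)$ with $\ttop(\tau)=s$, $\Left(\tau)=\pi_{L}$, $\Right(\tau)=\pi_{R}$, and the summand is by construction $\sd^{(\tau)}$. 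This yields the claimed identity.

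The main obstacle I anticipate is a bookkeeping one rather than a conceptual one: carefully matching the transposes in~\eqref{eq: structured spectrum} with the direction of the left path in the trek (so that $\Left(\tau)$ runs from $\ttop(\tau)$ to $v$, not from $v$ to $\ttop(\tau)$), and making sure the empty-path convention correctly accounts for the $k=0$ term in the Neumann series so that treks of the form $(\emptyset, v, \Right(\tau))$ appear exactly once. Justifying the interchange of the sums, which is what lets me reassemble the triple sum into a sum over $\treks(v,w)$, is immediate in the field of rational functions (finite sums for each fixed $z$ once we note that walks on the finite graph $G$ of any fixed length are finite in number, and the Neumann series converges under the stability hypothesis).
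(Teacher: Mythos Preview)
The paper does not itself prove this proposition; it quotes it from the companion paper \cite{reiter2023formalising} (Part~I of the series). Your argument is correct and is essentially the standard derivation one expects in that reference: expand $(I-\Fsep)^{-1}$ as a Neumann series using the stability assumption, read off $(\Fsep^{k})_{s,v}$ as a sum over length-$k$ directed walks, and reassemble the resulting triple sum as a sum over treks via the diagonality of $\sdint$. The bookkeeping of transposes you flag is handled exactly as you describe, since $[(I-\Fsep)^{-\top}]_{v,s}=[(I-\Fsep)^{-1}]_{s,v}$ gives paths from $\ttop(\tau)=s$ to $v$, matching the paper's convention that $\Left(\tau)\in\paths(\ttop(\tau),v)$.

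One small wording issue in your last paragraph: the interchange of sums is not justified by ``finite sums for each fixed $z$'' --- if $G$ has cycles there are infinitely many directed paths, hence infinitely many treks, and the reassembly into $\sum_{\tau\in\treks(v,w)}$ is genuinely an absolute-convergence statement guaranteed by the stability hypothesis (spectral radius of $\Fsep(z)$ below $1$ on $S^{1}$), not a finiteness statement. This is only a phrasing matter; the argument itself is sound.
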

\begin{example}
    On the process graph in Figure \ref{fig:tsg_example} there is the directed path $\pi = u \to v \to w$. The path function of $\pi$ evaluated at some $z  \in S^1$ is 
    \begin{align*}
        \fsep^{(\pi)}(z)&= \fsep_{u,v}(z) \fsep_{v,w}(z) \\
        &= \frac{\phi_{u,v}(0)\phi_{v,w}(0) + (\phi_{u,v}(0)\phi_{v,w}(1) + \phi_{u,v}(1)\phi_{v,w}(0))z + \phi_{u,v}(1)\phi_{v,w}(1)z^2}{1-(\phi_{v,v}(1)+ \phi_{w,w}(1))z - \phi_{v,v}(1)\phi_{w,w}(1)z^2}.
    \end{align*}
    Furthermore, let us compute the entries $\sd_{u,w}$ and $\sd_{v,w}$ using the spectral trek rule. For $\sd_{u,w}$, note that there is exactly one trek connecting $v$ and $w$, namely the path $\pi$. Regarding $\sd_{v,w}$, the set of treks is $\treks(v,w) = \{ v \leftarrow l \to w, v \to w \}$. Consequently, we get the following expressions for the corresponding entries in the spectrum:
    \begin{align*}
        \sd_{u,w} &= \fsep^{(\pi)}\sdint_u & \sd_{v,w}&= \fsep_{v,w}\sdint_v + \fsep_{l,v}\sdint_l \fsep_{l,w}^\ast
    \end{align*}
\end{example}

\section{An algebraic parameterisation of SVAR processes in the frequency domain}\label{sec: frequnency domain parameterisation}
The aim of this section is to provide a precise algebraic formulation of how the SVAR parameters map onto the direct transfer function, onto the spectral density of the internal dynamics, and finally onto the spectral density of the process. We will construct all of these objects as matrices over the field of rational functions.  
\subsection{Rational functions}
In this section, we will first recall the space of rational functions, and then construct a map on the space of rational functions that we need to parameterise the frequency representation of SVAR processes over the field of rational functions with real coefficients. 

The field of rational functions with real coefficients is constructed from the ring of polynomials, i.e. 
\begin{align*}
    \R[z] \coloneqq \bigcup_{d\geq 0}\{ \sum_{k=0}^d \alpha_k z^k | \alpha_k \in \R, \alpha_d \neq 0 \}.
\end{align*} 
The \emph{degree} of a polynomial $f= \sum_{k=0}^d \alpha_k z^k$ is $\deg(f) \coloneqq \max\{k: \alpha_k \neq 0 \}$. The field of fractions of the ring $\R[z]$ are the rational functions with real coefficients
\begin{align*}
    \R(z) \coloneqq \{ \frac{f}{g} | f, g \in \R[z] \}_{/ \sim },
\end{align*}
where $\frac{f}{g} \sim \frac{f'}{g'}$ if $fg' = f'g$. The degree of a rational function $r = \frac{f}{g} \in \R(z)$, where $f$ and $g$ are coprime, i.e. they do not share a common divisor in $\R[z]$, is the maximum of $\deg(f)$ and $\deg(g)$. Since every rational function admits a unique representation as a fraction of polynomials that are coprime, the degree of a rational function is well defined. 

The calculation of the spectral density involves complex conjugation. This seems to prevent the parameterisation of the spectral density as an element over the field $\R(z)$. However, we can get around this obstacle because the relevant domain for the spectral density is only the complex unit circle $S^1$, and for any point $z \in S^1$ it holds that $z^\ast = z^{-1}$. Based on this observation, we will construct an involution on the rational functions, i.e. an algebra isomorphism $()^\ast: \R(z) \to \R(z)$, which is its own inverse, such that in addition $r^\ast(z) = r(z^\ast)$ for every $z \in S^1$. 

As a first step in the construction of this map, we provide an auxiliary map on the space of polynomials $()^\ast: \R[z] \to \R[z]$. This map sends a polynomial of degree $n$, i.e. $f = \sum_{k=0}^n\alpha_k z^k \in \R[z]$, to $f^\ast \coloneqq \sum_{k=0}^n \alpha_{n-k}z^k$. The map $()^\ast$ is not additive but it commutes with multiplication on $\R[z]$ as we show in Lemma \ref{lemma: multiplicativity conjugate polynomials} in the Appendix. This property will turn out to be essential for the subsequent construction of the map on the rational functions.
Based on the auxiliary map $()^\ast$ we define the conjugation $()^\ast: \R(z) \to \R(z)$ on the space of rational functions as follows: 
\begin{align}\label{eq: rational conjugation}
    \frac{f}{g} \mapsto \left( \frac{f}{g} \right)^\ast \coloneqq \left(z \mapsto  \frac{f^\ast(z)}{g^\ast(z)} z^{\deg(g) - \deg(f)}\right)  
\end{align}
\begin{example}
    As an example of conjugation on $\R(z)$ let us consider the link function for $u \to v$ on the process graph in Figure \ref{fig:tsg_example}. The conjugated rational link function is equal to
    \begin{align*}
        \fsep^\ast_{u,v}(z) &= \frac{\phi_{u,v}(1) + \phi_{u,v}(0)z}{-\phi_{v,v}(1) + z}
    \end{align*}
\end{example}
The conjugation map is indeed an automorphism on the space of rational functions $\R(z)$, i.e. it is linear and respects the multiplication of rational functions. The proof of the following Proposition is deferred to the Appendix.
\begin{proposition}\label{prop: conjugation of rational functions}
    The conjugation $()^\ast: \R(z) \to \R(z)$ is an algebra involution, i.e. conjugation respects multiplication and addition of rational functions, and the conjugation map is its own inverse map, i.e. $(r^\ast)^\ast = r$ for all $r \in \R(z)$. Finally, it holds for every $z \in S^1$ and every $r \in \R(z)$ that $r^\ast(z) = r(z^\ast)$.  
\end{proposition}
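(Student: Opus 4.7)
The plan is to route everything through the key identity $r^\ast(z) = r(z^\ast)$ for $z \in S^1$, from which all remaining claims (including the hidden requirement that the map is well defined independently of the chosen numerator/denominator representation) follow from the fact that a rational function is uniquely determined by its values on any infinite subset of $\C$.

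First I would record the polynomial identity $f^\ast(z) = z^{\deg(f)} f(1/z)$, which is immediate from the definition of $()^\ast$ by re-indexing $k \mapsto \deg(f) - k$. Consequently, for a representation $r = f/g$ with $\deg(f) = m$, $\deg(g) = n$, and for any $z \in S^1$ (so that $z^\ast = 1/z$), a short calculation gives
\begin{align*}
r^\ast(z) &= \frac{f^\ast(z)}{g^\ast(z)}\, z^{n-m} = \frac{z^m f(1/z)}{z^n g(1/z)}\, z^{n-m} = \frac{f(1/z)}{g(1/z)} = r(z^\ast).
\end{align*}
This already pays for well-definedness: if $f/g = f'/g'$ as rational functions, then both representations produce conjugates whose values on $S^1$ coincide with $r(z^\ast)$, so they agree on an infinite set and therefore as elements of $\R(z)$.

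Given well-definedness, the remaining algebra properties reduce to identities of rational functions, which I would verify pointwise on $S^1$. Additivity follows from $(r_1 + r_2)^\ast(z) = (r_1 + r_2)(z^\ast) = r_1(z^\ast) + r_2(z^\ast) = r_1^\ast(z) + r_2^\ast(z)$ for all $z \in S^1$, hence as rational functions; multiplicativity can be handled in the same way, or alternatively read off directly from Lemma \ref{lemma: multiplicativity conjugate polynomials} combined with the additivity of degree under polynomial multiplication, which makes the correction factors in (\ref{eq: rational conjugation}) combine correctly. Involution follows by applying the $S^1$ identity twice: $(r^\ast)^\ast(z) = r^\ast(z^\ast) = r((z^\ast)^\ast) = r(z)$ on $S^1$, hence as rational functions.

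The one subtle point — and the reason the correction factor $z^{\deg(g)-\deg(f)}$ appears in (\ref{eq: rational conjugation}) — is that the underlying polynomial map is \emph{not} additive (e.g.\ for $f = z$ and $g = 1-z$ one has $(f+g)^\ast = 1 \neq z = f^\ast + g^\ast$), so additivity on $\R(z)$ cannot be lifted naively from the polynomial level. The correction factor exactly compensates for the degree shift in $f^\ast(z) = z^{\deg(f)} f(1/z)$, restoring the clean identity $r^\ast(z) = r(z^\ast)$ on $S^1$, which is what makes the entire argument go through uniformly.
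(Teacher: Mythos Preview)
Your proof is correct, but it takes a genuinely different route from the paper's own argument. The paper verifies each property by direct, purely algebraic manipulations at the level of coefficient sequences: well-definedness and multiplicativity are read off from Lemma~\ref{lemma: multiplicativity conjugate polynomials}; additivity is first established for the special case of polynomials (viewed as $f/1$), including a careful case split for the situation $\deg(f_1+f_2) < \max(\deg f_1,\deg f_2)$ where leading coefficients cancel, and then extended to general denominators using multiplicativity; involution is checked by an explicit computation with padded coefficient vectors. The $S^1$ identity is only recorded at the very end as a consequence.

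You invert this order: you first derive the pointwise identity $f^\ast(z) = z^{\deg f}f(1/z)$, use it to show $r^\ast(z) = r(1/z)$ on $S^1$, and then obtain well-definedness, additivity, multiplicativity and involution all at once by comparing rational functions on the infinite set $S^1$. This is considerably shorter and more conceptual; it also explains \emph{why} the correction factor $z^{\deg g - \deg f}$ is the right one, rather than just verifying that it works. The price is a mild appeal to the identity theorem for rational functions (two elements of $\R(z)$ agreeing on an infinite subset of $\C$ coincide), which the paper's coefficient-level proof avoids. Both arguments are complete; yours is the cleaner one.
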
 
Since $\R(z)$ is a field (with characteristic zero), we can consider vector spaces over $\R(z)$ with all known concepts from linear algebra, e.g. linear maps, linear independence, rank, bases, Gaussian elimination, determinants. An immediate implication of Proposition \ref{prop: conjugation of rational functions} is the following.
\begin{corollary}
    Let $A \in \R(z)^{n \times n}$ be a matrix over the field of rational functions with real coefficients, then it holds that 
    \begin{align*}
        \det(A^\ast) &= \det(A)^\ast \in \R(z),
    \end{align*}
    where $A^\ast$ is the entry-wise conjugation of $A$. In particular, $\det(A)= 0$ if and only if $\det(A^\ast) = 0$. 
\end{corollary}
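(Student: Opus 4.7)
The plan is to deduce the corollary directly from the fact that conjugation is an algebra involution on $\R(z)$, as established in the preceding Proposition. Since the determinant can be written as a polynomial expression in the matrix entries by the Leibniz formula, it is preserved by any ring homomorphism applied entrywise.

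More concretely, I would first invoke the Leibniz expansion
\begin{align*}
    \det(A) &= \sum_{\sigma \in S_n} \sgn(\sigma) \prod_{i=1}^n A_{i,\sigma(i)},
\end{align*}
which expresses $\det(A)$ using only additions and multiplications of entries of $A$ (together with the constants $\pm 1 \in \R \subset \R(z)$). Applying the conjugation map to both sides and using that $()^\ast: \R(z) \to \R(z)$ is additive and multiplicative, and that it fixes the constants $\pm 1$ (these are constant polynomials of degree zero, for which the definition in equation (\ref{eq: rational conjugation}) yields the identity), I obtain
\begin{align*}
    \det(A)^\ast &= \sum_{\sigma \in S_n} \sgn(\sigma) \prod_{i=1}^n (A_{i,\sigma(i)})^\ast = \det(A^\ast),
\end{align*}
which is the claimed identity.

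For the ``in particular'' statement, I would observe that any ring homomorphism between fields sends $0$ to $0$, so $\det(A) = 0$ implies $\det(A)^\ast = 0$, hence $\det(A^\ast) = 0$ by the identity just proved. The converse follows from involutivity: if $\det(A^\ast) = 0$, then applying $()^\ast$ a second time gives $\det((A^\ast)^\ast) = 0$, and since $(A^\ast)^\ast = A$ by Proposition~\ref{prop: conjugation of rational functions} applied entrywise, we conclude $\det(A) = 0$.

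I do not anticipate any real obstacle here; the proof is essentially bookkeeping once the Leibniz formula is invoked. The only mildly subtle point worth making explicit in the writeup is that the real constants appearing in the Leibniz formula are fixed by conjugation, so that the algebra homomorphism property of $()^\ast$ really does commute through the entire formula.
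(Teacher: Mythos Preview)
Your proof is correct and matches the paper's approach: the paper states the corollary as an immediate implication of Proposition~\ref{prop: conjugation of rational functions} without further argument, and your Leibniz-formula computation is exactly the standard way to cash this out.
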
 
Throughout this work we use the following matrix notations. Suppose $M\in \mathbb{K}^{V \times V}$ is a matrix over some field $\mathbb{K}$ (of characteristic zero) and let $X,Y \subset V$ be subsets, then we write $[M]_{X,Y}$ for the submatrix of $M$ that one obtains by selecting the entries corresponding to the rows indexed by $X$ and the columns indexed by $Y$, i.e. $[M]_{X,Y} \coloneqq (M_{x,y})_{x\in X, y \in Y} \in \mathbb{K}^{X \times Y}$.  
\begin{remark}\label{remark: rank}
    Suppose $M \in \mathbb{K}^{X\times Y}$ is a matrix. A submatrix $M_r = [M]_{X_r, Y_r} \in k^{X_r \times Y_r}$ of $M$ is called an $r$-minor if the subsets $X_r\subset X$ and $Y_r \subset Y$ have cardinality $r$, i.e. $|X_r| = |Y_r| = r$. The matrix $M$ has rank $r$ if $M$ has an ivertibel $r$-minor $M_r$, i.e. $\det(M_r)) \neq 0$, and if every $r+1$ minor $M_{r+1}$ of $M$ is not invertible, i.e. $\det(M_{r+1}) = 0$.
\end{remark}

\subsection{The rational frequency SVAR representation}
In this subsection, we use the terminology and constructions from the previous subsection for the frequency domain parameterisation of SVAR processes. Our overall goal is to build an algebraic identifiability theory on top of this parameterisation. Since identifying causal relationships and effects becomes particularly interesting and challenging when some of the variables are hidden, we extend our graphical terminology to also include latent processes.

Suppose $\G$ is a time series graph over a set of process indices $V$, some of which are labelled observable and some of which are labelled latent, i.e. $V = O \cup L$ with $O \cap L = \emptyset$.
We define the set $\D_{O} \subset \D$ to be the set of all edges that emerge from some observed $v(t)\in O \times \Z$, and we denote as $\D_{L}$ the set of edges originating from a latent $l(t)\in L \times \Z$. Similarly, we divide the set of process-level links $D = D_O \cup D_L$, where $D_O$ consists of the links that emerge from an observed process and $D_L$ contains the links emerging from a latent process. For a process $v\in V$, we distinguish between its observed and latent parents by writing $\pa_O(v)$ and $\pa_L(v)$ for $\pa(v)\cap O$ and $\pa(v)\cap L$ respectively. Throughout this work require any latent structure to satisfy the following.
\begin{assumption}[Exogenous latent structure]\label{assumption: latent structure}
    Suppose $\G=((O \cup L) \times \Z, \D_O \cup \D_L)$ is a time series graph carrying a latent structure. Then the set of edges is such that every latent process $l \in L$ has zero incoming edges on the process graph $G=(O \cup L, D_O \cup D_L)$. 
\end{assumption} 
\begin{example}
    Let us equip the process graph in Figure \ref{fig:tsg_example} with a latent structure. We define the observed processes to be $O= \{u,v,w\}$ and the latent processes are $L = \{ l \}$. Accordingly, the set of observed edges are $D_O =\{ u\to v, v \to w \}$ and the set of latent edges are $L = \{l\to v, l \to w \} $. This aligns with Assumption \ref{assumption: latent structure} because there no edges pointing to $l$.  
\end{example}

\paragraph{The space of possible transfer functions} 
Let $\R(z)_\stable^{D}$ be the set of matrices $A\in \R(z)^{V \times V}$ that satisfy the following three conditions:
\begin{enumerate}
    \item If $v,w \in V$ and $v \not \to w$, then $A_{v,w} =0$.
    \item The entries of $A$ do not have poles inside the complex unit disk. 
    \item The submatrix $[I-A]_{O,O}\in \R(z)^{O \times O}$ is invertible and its inverse is
    \begin{align*}
        [I-A]^{-1}_{O,O} &= \sum_{k \geq 0} [A]_{O,O}^k,
    \end{align*}
    and the entries of $([I-A]_{O,O})^{-1}$ do not have poles inside $D^1$.
\end{enumerate} 
In order to make sure that the transfer function $H$ associated with an SVAR parameter $\Phi$ is indeed an element in $\R^D_\stable(z)$, we need to impose suitable constraints on $\Phi$.

\paragraph{SVAR parameter space} In the following we denote with $\R^\D$  the vector space in which each dimension corresponds to a unique lagged or contemporaneous relation $v \to_k w$ on the time series graph. Since the order of $\G$ is finite the vector space $\R^\D$ is finite dimensional. 
We define  $\R^\D_\stable$ to be the set of all parameters $\Phi \in \R^\D$ such that for every $v\in V$ it holds that
\begin{align}\label{condition: auto links}
    \sum_{k \geq 0 } |\phi_{v,v}(k)| < 1,
\end{align}
and for the coefficients of the direct lagged effects between the observed processes it holds that
\begin{align}\label{condition: observed effects}
    \sum_{v,w \in O} \sum_{k: v\to_k w} |\phi_{v,w}(k)| < 1.
\end{align}
These two requirements render the space $\R^\D_\stable$ an open semi-algebraic subspace of $\R^\D$. If the SVAR parameter $\Phi$ lies in $\R^\D_\stable$, then $\Phi$ is stable and the associated transfer function $H(\Phi)$ is an element in $\R(z)^D_\stable$, see \cite{reiter2023formalising}.
The decomposition of the links $\D = \D_O \cup \D_L$ factors the set $\R^\D_\stable \cong \R^{\D_O}_\stable \times \R^{\D_L}_\stable$ so that a SVAR coefficient vector splits as $\Phi = (\Phi_O, \Phi_L)$. From now on we refer by $\R^\G_\stable$ to the product space $\R^\D_\stable \times \R^{V}_+$, where $\R^V_+$ is the space of all positive definite diagonal matrices.  
\begin{remark}
    If the process graph $G$ is acyclic, then it is sufficient to require the inequality (\ref{condition: auto links}) for every process to ensure that the associated transfer function $\Fsep(\Phi)$ is in $\R(z)^D_\stable$. Because of the Assumption \ref{assumption: latent structure}, we did not need to include the lagged effects arising from the latent processes in the inequality (\ref{condition: observed effects}).
\end{remark}

Also the internal spectrum can be considered as a (diagonal) matrix over the rational functions $\R(z)$, as specified in the following Lemma. 
\begin{lemma}\label{lemma: internal spectrum}
    There is a diagonal matrix $\Tilde{\sd}^\internal \in \R(z)^{O \times O}$ such that its restriction to the complex unit circle is precisely  the internal spectrum $\sdint$ from (\ref{eq: parameterisation internal spectrum}), i.e. $\Tilde{\sd}^\internal(z) = \sdint(z)$ for all $z\in S^1$. 
\end{lemma}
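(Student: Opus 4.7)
The plan is to exhibit $\Tilde{\sd}^\internal$ explicitly, entry by entry, and then use Proposition \ref{prop: conjugation of rational functions} to verify that it restricts to the internal spectrum on $S^1$. Because the internal dynamics are mutually independent, I only need to produce the diagonal entries; the off-diagonal entries of $\Tilde{\sd}^\internal$ are declared to be zero, which matches $\sdint_{v,w} \equiv 0$ for $v \neq w$.

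For each $v \in O$, I would set
\begin{align*}
    \Tilde{\sd}^\internal_{v,v} \;\coloneqq\; \frac{\omega_v}{(1-\varphi_{v,v})\,(1-\varphi_{v,v})^\ast}\ \in\ \R(z),
\end{align*}
where the first factor in the denominator is the polynomial $1-\varphi_{v,v} \in \R[z]$ viewed as a rational function, and the second factor is its image under the involution $()^\ast: \R(z) \to \R(z)$ of Equation (\ref{eq: rational conjugation}). This is a well-defined element of $\R(z)$: the numerator is a nonzero real constant (since $\omega_v > 0$), and the denominator is the product of two nonzero elements of $\R(z)$, hence itself a nonzero rational function. Collecting these into a diagonal matrix yields $\Tilde{\sd}^\internal \in \R(z)^{O \times O}$.

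To check that $\Tilde{\sd}^\internal(z) = \sdint(z)$ for every $z \in S^1$, it suffices to verify equality diagonal-entry-wise. Fix $z \in S^1$. By the last clause of Proposition \ref{prop: conjugation of rational functions}, $(1-\varphi_{v,v})^\ast(z) = (1-\varphi_{v,v})(z^\ast) = 1 - \varphi_{v,v}(z^\ast)$. Substituting this into the definition of $\Tilde{\sd}^\internal_{v,v}$ and comparing with Equation (\ref{eq: parameterisation internal spectrum}) gives
\begin{align*}
    \Tilde{\sd}^\internal_{v,v}(z) \;=\; \frac{\omega_v}{(1-\varphi_{v,v}(z))\,(1-\varphi_{v,v}(z^\ast))} \;=\; \sdint_v(z),
\end{align*}
as required. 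The off-diagonal identity $0 = \sdint_{v,w}(z)$ for $v \neq w$ is immediate from the pairwise independence of the internal dynamics recorded right after Equation (\ref{eq: parameterisation internal spectrum}).

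There is no real obstacle here; the only subtle point is to invoke the conjugation on $\R(z)$ rather than ordinary complex conjugation, so that the factor $1-\varphi_{v,v}(z^\ast)$ in (\ref{eq: parameterisation internal spectrum}) can be realised as a bona fide rational function in $z$. Once the involution from Equation (\ref{eq: rational conjugation}) and Proposition \ref{prop: conjugation of rational functions} are available, the construction is algebraically forced.
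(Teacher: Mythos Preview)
Your proof is correct and follows essentially the same approach as the paper: define the diagonal entries via the $\R(z)$-involution $()^\ast$ applied to (a rational function built from) $1-\varphi_{v,v}$, set the off-diagonal entries to zero, and invoke Proposition~\ref{prop: conjugation of rational functions} to match $\sdint$ on $S^1$. The only cosmetic difference is that the paper writes the diagonal entry as $\omega_v\,\frac{1}{1-\varphi_{v,v}}\left(\frac{1}{1-\varphi_{v,v}}\right)^\ast$, which equals your $\frac{\omega_v}{(1-\varphi_{v,v})(1-\varphi_{v,v})^\ast}$ by multiplicativity of the involution.
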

\begin{proof}
    Let $v \in V$ be a process index, then recall that for $z \in S^1$ we defined 
    \begin{align*}
        \sdint_v(z) &= \omega_{v} \frac{1}{(1-\varphi_{v,v}(z))(1- \varphi_{v,v}(z^\ast))},
    \end{align*}
    where $\varphi_{v,v} \in \R[z]$ is the lag polynomial of $v$. Using conjugation on rational functions $\R(z)$ (\ref{eq: rational conjugation}) we now define
    \begin{align*}
        \Tilde{\sd}^\internal_{v,w} &= \begin{cases}
            \omega_v \frac{1}{1- \varphi_{v,v}} \left( \frac{1}{1- \varphi_{v,v}}\right)^\ast & \text{ if } v = w \\
            0 & \text{ if } v \neq w
        \end{cases}
    \end{align*}
    so that $\Tilde{\sd}^\internal \in \R(z)^{O \times O}_{\diag}$, and by Proposition \ref{prop: conjugation of rational functions} it holds that $\sdint(z) = \Tilde{\sd}^\internal(z)$ for every $z \in S^1$. 
\end{proof}
From now on we use $\sdint$ to identify the diagonal matrix over $\R(z)$ as constructed in Lemma \ref{lemma: internal spectrum}. 

In the following, we refer by $\pd_O(\R(z), D_{L})$ to the set of all matrices $B+C^\top C^\ast \in \R(z)^{O \times O}$, where $B \in \R(z)^{O \times O}$ is a diagonal matrix such that $B(z) \in \C^{O \times O}$ is positive definite for every $z \in S^1$, and where $C \in \R(z)^{L \times D}_\stable$ such that $C_{l,v}=0$ if $l \not \to v$ and such that $C$ has no poles inside $D^1$.
Based on the above observations we conclude with the map
\begin{displaymath}\label{map: frequency parameterisation}
    \F_\G = ([\Fsep]_{O,O}, \sdlint): \R^\G_\stable \longrightarrow \R(z)^{D_O}_\stable \times \pd_{O}(\R(z), D_L),
\end{displaymath}
which sends a SVAR parameter $(\Phi, \Omega)$ to the associated direct transfer function (of the observed processes) and \emph{projected internal spectrum}
\begin{align*}
    \sdlint & \coloneqq \sdint_{O} +  [\Fsep]_{L,O}^\top \sdint_L [\Fsep]_{L,O}^\ast,
\end{align*}
where $\sdint_O$ resp. $\sdint_L$ denotes the submatrix $[\sdint]_{O,O}$ and $[\sdint]_{L,L}$ respectively.
\begin{example}
    Let us consider the projected internal spectrum of the process parameterised by the time series graph in Figure \ref{fig:tsg_example}. Starting with the diagonal entries, the projected internal spectrum of the process $u$ is equal to its internal spectrum, since $u$ has no latent parents. The processes $v$ and $w$, on the contrast, both have the process $l$ as a latent parent. So, the contribution of $l$ to $v$ resp. $w$ needs to be added to the internal spectra when computing the projected internal internal spectra, i.e. 
    \begin{align*}
        \sdlint_u &= \sdint_v \\ 
        &=\omega_u(1-\varphi_{u,u})^{-1}(1-\varphi_{u,u})^{-\ast} \\
        \sdlint_v &= \sdint_v + \fsep_{l,v}\fsep_{l,v}^\ast\sdint_l \\
        &= \omega_v(1-\varphi_{v,v})^{-1}(1-\varphi_{v,v})^{-\ast} + \omega_l\fsep_{l,v}\fsep_{l,v}^\ast (1-\varphi_{l,l})^{-1}(1-\varphi_{l,l})^{-\ast} \\
        \sdlint_w &= \sdint_w + \fsep_{l,w}\fsep_{l,w}^\ast \sdint_l \\
        &= \omega_w(1-\varphi_{w,w})^{-1}(1-\varphi_{w,w})^{-\ast} + \omega_l\fsep_{l,w}\fsep_{l,w}^\ast (1-\varphi_{l,l})^{-1}(1-\varphi_{l,l})^{-\ast} 
    \end{align*}
    For the off-diagonal entries, note that only the entry corresponding to the pair $(v,w)$ is non-zero. This is because $\pa_L(u)\cap \pa_L(v)=\emptyset$ and $\pa_L(u) \cap \pa_L(w) =\emptyset$, but $\pa_L(v) \cap \pa_L(w)= \{l\}$. Consequently, 
    \begin{align*}
        \sdlint_{u,v}&= \sdlint_{u,w} = 0 \\
        \sdlint_{v,w} &= \omega_h\fsep_{l,v}\fsep_{l,w}^\ast (1-\varphi_{l,l})^{-1}(1-\varphi_{l,l})^{-\ast} 
    \end{align*}
\end{example}
In the following, we verify that the projected internal spectrum indeed maps to an element in $\pd_O(\R(z), D_L)$.
\begin{lemma}
    For the projected internal spectrum $\sdlint$ it holds that: (1.) $\sdlint(z)$ is positive definite for every $z \in S^1$, (2.) for two distinct $v,w \in O$ it holds that $\sdlint_{v,w} =0$ if $\pa_L(v) \cap \pa_L(w) = \emptyset$.
\end{lemma}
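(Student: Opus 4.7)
The plan is to treat the two claims separately, exploiting the explicit decomposition $\sdlint = \sdint_O + [\Fsep]_{L,O}^\top \sdint_L [\Fsep]_{L,O}^\ast$, together with the fact established in Proposition \ref{prop: conjugation of rational functions} that rational conjugation restricts on the unit circle to ordinary complex conjugation. This is the one point that needs to be invoked carefully throughout, since it lets us translate the purely algebraic identity for $\sdlint$ into a statement about complex matrices with Hermitian positivity at each $z \in S^1$.

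For claim (1), I would fix $z \in S^1$ and argue that each of the two summands in $\sdlint(z)$ is positive semidefinite, with the first strictly so. For $\sdint_O(z)$, stability (condition (\ref{condition: auto links})) guarantees $1-\varphi_{v,v}(z)\neq 0$ on the closed unit disk, so each diagonal entry $\sdint_v(z) = \omega_v / |1-\varphi_{v,v}(z)|^2$ from (\ref{eq: parameterisation internal spectrum}) is strictly positive; hence $\sdint_O(z)$ is a positive diagonal matrix. For the second summand, set $M = [\Fsep]_{L,O}(z) \in \C^{L \times O}$ and $D = \sdint_L(z)$, which is diagonal with positive real entries. By Proposition \ref{prop: conjugation of rational functions}, evaluating the rational conjugate at $z \in S^1$ gives $[\Fsep]_{L,O}^\ast(z) = \overline{M}$, so the second summand becomes $M^\top D \overline{M}$. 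For any $x \in \C^O$, setting $y = \overline{M}x$ yields $y^* = x^* M^\top$ and hence $x^*(M^\top D \overline{M})x = y^* D y \geq 0$. Summing a positive definite with a positive semidefinite Hermitian matrix gives a positive definite matrix, which is claim (1).

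For claim (2), I would just expand the $(v,w)$-entry of the product directly. Since $\sdint_L$ is diagonal, we obtain
\begin{align*}
    \sdlint_{v,w} &= \sdint_{O,\,v,w} + \sum_{l \in L} \fsep_{l,v}\, \sdint_l\, \fsep_{l,w}^{\ast}.
\end{align*}
For $v\neq w$ the first term vanishes since $\sdint_O$ is diagonal. In the remaining sum, the factor $\fsep_{l,v}$ is identically zero whenever $l \notin \pa_L(v)$, because $[\Fsep]_{L,O}$ inherits the sparsity pattern of $D_L$ from the definition of $\R(z)^D_\stable$; by Proposition \ref{prop: conjugation of rational functions} the same is true of $\fsep_{l,w}^\ast$ unless $l \in \pa_L(w)$. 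Hence only indices $l \in \pa_L(v)\cap\pa_L(w)$ contribute, and if that intersection is empty then $\sdlint_{v,w}=0$.

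I do not anticipate a serious obstacle: both claims reduce to elementary linear algebra once the rational-function formalism is properly translated into ordinary Hermitian language on $S^1$. The only mild subtlety to flag explicitly is the transpose-versus-conjugate-transpose bookkeeping in the Gram-type computation for (1), where the $M^\top \sdint_L M^\ast$ pattern (rather than $M^\ast \sdint_L M$) forces one to pass through $y = \overline{M}x$ rather than $y = Mx$; this is purely notational but worth writing out to avoid a sign or conjugation slip.
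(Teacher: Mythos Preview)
Your proposal is correct and follows essentially the same approach as the paper: both expand the entrywise formula for $\sdlint_{v,w}$ to read off claim (2) from the sparsity of $[\Fsep]_{L,O}$, and both verify claim (1) by splitting $\sdlint(z)$ into the strictly positive diagonal $\sdint_O(z)$ plus a positive semidefinite Gram-type term, invoking Proposition~\ref{prop: conjugation of rational functions} to pass from rational conjugation to complex conjugation on $S^1$. Your handling of the transpose/conjugate bookkeeping via $y=\overline{M}x$ is in fact more explicit than the paper's entrywise quadratic-form computation.
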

\begin{proof}
    First, observe that for two observed processes $v,w \in O$ the associated entry in the projected internal spectrum is given by the formula 
    \begin{align*}
        \sdlint_{v,w} &= \begin{cases}
            \sdint_{v,v} + \sum_{l \in \pa_L(v)} \fsep_{l,v}\fsep_{l,v}^\ast \sdint_l & \text{ if } v = w \\
            \sum_{l \in \pa_L(v) \cap \pa_L(w)} \sdint_l\fsep_{l,v} \fsep_{l,w}^\ast & \text{ if } v \neq w
        \end{cases}
    \end{align*}
    From this it follows that $\sdlint$ is symmetric after entry wise conjugation and also that $\sdlint_{v,w}= 0$ whenever $\pa_L(v)\cap \pa_L(w) = \emptyset$. It remains to show that, on the unit circle, the projected internal spectrum evaluates to a positive definite matrix. So let $x \in \C^O$ be a non-zero vector and $z \in S^1$ a point on the complex unit circle. Using the formula above and Proposition \ref{prop: conjugation of rational functions} and that $\sdint_v(z) > 0$ for every $z \in S^1$ we get that 
\begin{align*}
    (x^\ast)^\top \sdlint(z) x = \sum_{v \in O } \omega_v|x_v|^2 \sdint_v(z) + \sum_{l \in \pa_L(v)} |x_v\fsep_{l,v}(z)|^2\sdint_l(z) > 0,
\end{align*} 
which shows that $\sdlint(z)$ is positive definite. 
\end{proof}
\begin{remark}
    Using the trek rule (Proposition \ref{prop: trek rule}) we see that the projected internal spectrum is given by the trek monomials that are associated with the trivial treks and the treks that involve edges in $D_L$ only. 
\end{remark}

An immediate question concerning the paramterising map of the transfer function of the observed processes $[H]_{O,O}$ is whether the SVAR parameters of $\Phi_O$ for the lagged and instantaneous effects between the observed nodes can be uniquely recovered from $[\Fsep(\Phi_O)]_{O,O}$ almost everywhere.
\begin{lemma}\label{lemma: injectivity parameterisation}
    The parameterisation of the direct transfer function $[\Fsep]_{O,O}$ is generically injective. That means, if $A \subset \R^{\D_O}_\stable$ denotes the subset of parameter vectors $\Phi_O$ for which there is another $\Phi_O' \in \R^{\D_O}_\stable \setminus \{\Phi_O\}$ such that $[H(\Phi_O)]_{O,O} = [H(\Phi_O')]_{O,O}$. Then the set $A$ has Lebesgue measure zero. 
\end{lemma}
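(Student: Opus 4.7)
The plan is to reconstruct $\Phi_O$ from $[\Fsep(\Phi_O)]_{O,O}$ column by column; in fact this will establish injectivity on all of $\R^{\D_O}_\stable$, so that the exceptional set $A$ is empty and a fortiori of Lebesgue measure zero.

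I would first decouple the problem by target. Fixing $w \in O$, the $w$-th column of $[\Fsep]_{O,O}$ consists, by (\ref{eq: transfer function}), of entries $\fsep_{v,w} = \varphi_{v,w}/(1-\varphi_{w,w})$ for $v \in \pa_O(w)$ (and for $v = w$ when $w \to w$ is a process-graph edge), with zeros elsewhere. All non-zero entries in that column share the common denominator $1 - \varphi_{w,w}$, so the column depends only on the subvector of $\Phi_O$ indexed by the observed incoming links at $w$; the columns may therefore be analysed independently.

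The core step is recovering $\varphi_{w,w}$. If $w$ has no self-loop on $G$ then $\varphi_{w,w} \equiv 0$ by definition and $\fsep_{v,w} = \varphi_{v,w}$ directly exhibits the numerator. Otherwise the diagonal entry $\fsep_{w,w} = \varphi_{w,w}/(1-\varphi_{w,w})$ is visible. I would then invert the Möbius-type map $\varphi \mapsto \varphi/(1-\varphi)$ on $\R(z)$ via its explicit inverse $r \mapsto r/(1+r)$; this is well-defined because $1 + \fsep_{w,w} = 1/(1-\varphi_{w,w})$ is non-zero, the polynomial $1 - \varphi_{w,w}$ being non-vanishing by the stability condition (\ref{condition: auto links}). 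Hence $\varphi_{w,w} = \fsep_{w,w}/(1+\fsep_{w,w})$ is uniquely determined by $\fsep_{w,w}$ alone. With $\varphi_{w,w}$ in hand, each remaining numerator is read off as $\varphi_{v,w} = \fsep_{v,w}(1-\varphi_{w,w})$ for $v \in \pa_O(w) \setminus \{w\}$, and the individual parameters $\phi_{v,w}(k)$, $\phi_{w,w}(k)$ are the coefficients at the lags $\{k : v \to_k w\}$ prescribed by $\G$.

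Iterating the recovery over all $w \in O$ yields a single-valued left inverse of $\Phi_O \mapsto [\Fsep(\Phi_O)]_{O,O}$, so the map is globally injective on $\R^{\D_O}_\stable$, which implies the claim. The main conceptual obstacle is to notice that $\varphi_{w,w}$ and $1-\varphi_{w,w}$ are automatically coprime in $\R[z]$ (any common divisor divides their sum, which is the unit $1$), so that the Möbius inverse works everywhere rather than merely generically; this is what allows the diagonal entry $\fsep_{w,w}$ to suffice on its own and removes any genericity assumption the argument might otherwise require.
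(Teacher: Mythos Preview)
There is a genuine gap. Your argument hinges on the diagonal entry $\fsep_{w,w}=\varphi_{w,w}/(1-\varphi_{w,w})$ being part of $[\Fsep]_{O,O}$, but in the SEP representation used in this paper the autoregressive self-links are absorbed into the internal dynamics $\X^\internal$, not into $\Lambda$. This is visible from the formula (\ref{eq: parameterisation internal spectrum}): $\sdint_v=\omega_v\big((1-\varphi_{v,v})(1-\varphi_{v,v})^\ast\big)^{-1}$ already carries the factor $1-\varphi_{v,v}$, which is only consistent with $\sd=(I-\Fsep)^{-\top}\sdint(I-\Fsep^\ast)^{-1}$ if $\Fsep_{v,v}=0$. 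So the diagonal of $[\Fsep]_{O,O}$ is identically zero and gives you no handle on $\varphi_{w,w}$; the M\"obius inversion step has nothing to act on.

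With the diagonal unavailable, $\varphi_{w,w}$ can only be read off the denominator of an off-diagonal entry $\fsep_{v,w}=\varphi_{v,w}/(1-\varphi_{w,w})$, and this works precisely when numerator and denominator are coprime. That condition is not automatic: for instance, with lags $1,2$ on $v\to w$ and lag $1$ on $w\to w$, both parameter choices $(\phi_{v,w}(1),\phi_{v,w}(2),\phi_{w,w}(1))=(\epsilon,-\epsilon/2,1/2)$ and $(\epsilon,-\epsilon/3,1/3)$ lie in $\R^{\D_O}_\stable$ for small $\epsilon$ and produce the same $\fsep_{v,w}=\epsilon z$, so the map is genuinely not globally injective. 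The paper therefore argues genericity: the non-coprimality locus is contained in the zero set of the resultant polynomial $\res_{D_O}(\Phi_O)=\prod_{v\to w}\res(\varphi_{v,w},1-\varphi_{w,w})$, which is a proper algebraic subset and hence Lebesgue-null. Your observation that $\varphi_{w,w}$ and $1-\varphi_{w,w}$ are always coprime is correct, but it is the coprimality of $\varphi_{v,w}$ with $1-\varphi_{w,w}$ for $v\neq w$ that matters here.
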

\begin{proof}
    The recovery of $\Phi_O$ from $[\Fsep(\Phi_O)]_{O,O}$ could only fail if there is a link $v \to w$ between observed processes $v,w\in O$ such that the denominator and the numerator of the associated transfer function are not coprime, i.e. if the polynomial $\varphi_{v,w}$ and the polynomial $1 - \varphi_{w,w}$ share a common divisor in $\R[z]$. Recall that two polynomials $f,g \in \R[z]$ have a common divisor in $\R[z]$ if and only if the resultant $\res(f,g)$ evaluates to zero. Considering $\varphi_{v,w}$ and $\varphi_{w,w}$ as $\Phi_O$-parameterised polynomials exhibits $\res(\varphi_{v,w}, 1- \varphi_{w,w})$ as a multi-variate polynomial in $\Phi_O$. So the a parameter $\Phi_O$ can only then not be uniquely recovered from $H(\Phi_O)$ if $\res_{D_O}(\Phi_O) = 0$, where 
    \begin{align*}
        \res_{D_O} \coloneqq\prod_{v \to w \in D_O}\res(\varphi_{v,w}, 1- \varphi_{w,w}) \in \R[\Phi_O]. 
    \end{align*}
    The polynomial $\res_{D_O}$ is non-zero because each of its factors $\res(\varphi_{v,w}, 1- \varphi_{w,w})$, with $v \to w \in D_O$, is a non-zero polynomial. It follows that the zero set $ \{ \Phi_O \in \R^\D_\stable \mid \res_{D_O}(\Phi_O) = 0\}$ is a closed lower-dimensional variety (manifold) lying inside the parameter space $\R^{\D_O}$ and is therefore a zero set with respect to the Lebesgue measure. Since the set $A$ is contained in this zero set, the claim follows. 
\end{proof}

Finally, using identity (\ref{eq: structured spectrum}) and the parameterisation $\F_\G$ we conclude this section with the following parameterisation of the spectrum 
\begin{align}\label{map: spectrum}
    \sd_\G: \R^\G_\stable \overset{\F_\G}{\longrightarrow} \R(z)^{D_O}_\stable \times \pd_O(\R(z), D_L) \overset{\sd_G}{\longrightarrow} \pd_{O}(\R(z)), 
\end{align}
where $\sd_G(\Fsep, \sdlint) = (I - \Fsep^\top)^{-1} \sdlint (I - \Fsep^\ast)^{-1}$. The set $\pd_{O}(\R(z))\subset \R(z)^{O \times O}$ consists of all matrices $C^\top B C^\ast $, where $C \in \R(z)^{O \times O}$ and $B \in \pd_O(\R(z), D_L)$. If there is no need to emphasise the time series graph $\G$ we will rather write $\sd$ instead of $\sd_\G$ to denote the spectral parametrisation. 
\begin{remark}\label{rem: rational spectum}
    For the sections to come it will be useful to keep in mind that the parameterisation map $\sd_\G$ can also be seen as a rational function in the indeterminates $\Phi, \Omega$ and $z$, i.e as a matrix in $\R(\Phi, \Omega, z)^{O \times O}$, where $\R(\Phi, \Omega, z)$ is the field of rational functions in $\Phi, \Omega$ and $z$ with real coefficients. 
\end{remark}

\section{Spectral process graph identification}\label{sec: structure identifiability}
In this section, we use the parametrization of the spectrum as a matrix over the rational functions $\R(z)$ to relate the graphical structure of the process graph to algebraic statements about the spectrum. Specifically, we will see how graphical separation statements about $G$ correspond to rank conditions (which are conditions that can expressed in terms of determinants, see Remark \ref{remark: rank}) on certain submatrices of the spectrum $\sd$. These characterisations are analogous to the results for linear Gaussian SCMs. 
 
\begin{definition}[Generic rank of a submatrix in the spectrum]
    Let $G=(V,D)$ be a process graph and $\G \in \T(G)$ be a time series graph, and let $X,Y \subset V$ be subsets, then $[\sd_\G]_{X,Y}$ is said to be of rank $r$ generically if the set of SVAR parameters $A\subset \R^\G_\stable$ for which this is not the case has Lebesgue measure zero. 
    The process-level subspectrum $[\sd_G]_{X,Y}$ is said to be of rank $r$ generically if $[\sd_\G]_{X,Y}$ is of rank $r$ generically for each $\G \in \T(G)$.  
\end{definition}
\begin{remark}
    The subspectrum $[\sd]_{X,Y}$ generically has rank $r$ if there exist subsets $X_r \subset X$ and $Y_r \subset Y$ each of size $r$ such that $\det([\sd]_{X_r, Y_r}) \neq 0$ in $\R(\Phi, \Omega, z)$, and $\det([\sd]_{X_{r+1}, Y_{r+1}}) =0$ for all subsets $X_{r+1}\subset X$ and $Y_{r+1}\subset Y$, both of size $r+1$. 
\end{remark}
Suppose $X,Y$ are subsets of equal size such that $[\sd]_{X,Y}$ is generically invertible. So we can find $(\Phi, \Omega) \in \R^\G_\stable$ such that the parameterised subspectrum  $[\sd(\Phi, \Omega)]_{X,Y} \in \R(z)^{X \times Y}$ is invertible. This however, does not mean that $[\sd(\Phi, \Omega; z)]_{X,Y} \in \C^{X\times Y}$ is invertible for every $z \in S^1$. So, one could ask the following question. 
\begin{question}
    For $X,Y \subset V$ such that $\rank([\sd]_{X,Y}) =r$ generically we consider the set 
    \begin{align*}
        A_{X,Y} \coloneqq \{ (\Phi, \Omega) \in \R^\G_\stable \mid \rank([\sd_\G(\Phi, \Omega, z)]_{X,Y}) \neq r \text{ for all $z \in S^1$} \}.
    \end{align*}
    Does $A_{X,Y}$ have measure zero? 
\end{question}
Below we illustrate how the requirement that the non-zero determinant of a subspectrum vanishes at some point on the unit circle introduces polynomial constraints on the SVAR parameters.  
\begin{example}
    We consider the process graph $G=(V,D)$, where $V= [3]$ and $D = \{ 1 \to 2, 1 \to 3 \}$. The underlying time series graph $\G$ has order one and is fully connected under the restriction of $G$ and that the maximum time lag is one. For the computation of $\sd_{2,3}$ we need the following parameterised functions  
    \begin{align*}
        \fsep_{1,2}(z) &= \frac{\phi_{1,2}(0) + \phi_{1,2}(1)z}{1- \phi_{2,2}(1)z} & \fsep_{1,3}(z) &= \frac{\phi_{1,3}(0)+ \phi_{1,3}(1)z}{1- \phi_{3,3}(1)z} \\
        \intsep_{1}(z) &= \frac{1}{1 - \phi_{1,1}(z)}
    \end{align*}
    It is clear that $[\sd_\G]_{2,3}$ has generically rank one. In this example we will furthermore see that $\sd_{2,3}$ is generically non-zero on the entire unit circle. To see that we determine all parameter constellations $\Phi$ for which $\sd_{2,3}$ has a zero on the unit circle $S^1$. The parameterised expression for $\sd_{2,3}$ is 
    \begin{align*}
        \frac{(\phi_{1,2}(0) + \phi_{1,2}(1)z)z^2 (\phi_{1,3}(1)+ \phi_{1,3}(0)z)}{(1 - \varphi_{2,2}(z)) (1 - \varphi_{1,1}(1)z) (z - \varphi_{1,1}(1)) (z - \varphi_{3,3}(1))}
    \end{align*}
    This expression is zero if an only if the numerator is zero. The numerator is zero for some $z \in S^1$ if and only if the following polynomial evaluates to zero:
    \begin{align*}
        n_{2,3} = p_{2,3}^{(0)} + p_{2,3}^{(1)}z + p_{2,3}^{(2)}z^2,
    \end{align*}
    where each $p_{2,3}^{(i)}$ is a polynomial in the coefficients $\Phi$. They are as follows 
    \begin{align*}
        p_{2,3}^{(0)} &= \phi_{1,2}(0) \phi_{1,3}(1) \\
        p_{2,3}^{(1)} &= \phi_{1,2}(1)\phi_{1,3}(1) + \phi_{1,2}(0)\phi_{1,3}(0)\\
        p_{2,3}^{(2)} &= \phi_{1,2}(1)\phi_{1,3}(0)
    \end{align*}
    If $n_{2,3}$ vanishes at $a \in S^{1}$, then either $a=\pm 1$ or $a = x + iy$ with $y \neq 0$. First, consider the case where $n_{2,3}(1)=0$, this means 
    \begin{align*}
        n_{2,3}(z) &= (1-z)(\hat{p}_{2,3}^{(0)} + \hat{p}_{2,3}^{(1)}) 
    \end{align*}
    From this equation we get the following conditions
    \begin{align*}
        p_{2,3}^{(0)} &= \hat{p}_{2,3}^{0} \\
        p_{2,3}^{(1)} &= \hat{p}_{2,3}^{1} - \hat{p}_{2,3}^{0} \\
        p_{2,3}^{(2)} & = -\hat{p}_{2,3}^{1}
    \end{align*}
    So the numerator $n_{2,3}$ vanishes at $1$ if and only if the the coefficients satisfy the following relation
    \begin{align*}
        p_{2,3}^{(0)} + p_{2,3}^{(1)} + p_{2,3}^{(2)} = 0.
    \end{align*}
    Using a similar approach we see that the numerator vanishes at $a=-1$ if and only if the coefficients satisfy the condition
    \begin{align*}
        p_{2,3}^{(1)} - p_{2,3}^{(1)} - p_{2,3}^{(2)} = 0.
    \end{align*}
    Finally, $n_{2,3}$ vanishes at some $a= x+iy$ with non-zero imaginary part $y$, then it vanishes also at the complex conjugate $a^\ast$. This implies that $n_{2,3}$ vanishes at some $a \in S^1$ with non-zero imaginary part if and only if 
    \begin{align*}
        p_{2,3}^{(0)} &= 1 & p_{2,3}^{(1)} &= -2 x & p_{2,3}^{(2)} &= 1  
    \end{align*}
    This shows that for generic choices of $\Phi$, the entry in the spectral density $\sd_{2,3}$ is non-zero on the entire unit circle. 
\end{example}

\begin{definition}[Trek separation \cite{sullivant2010trek}]
    Let $G=(V,D)$ be a process graph and $X,Y,Z_X, Z_Y \subset V$ not necessarily disjoint subsets. Then $(Z_X, Z_Y)$ is said to \emph{t-separate} $X$ from $Y$ if for every trek $\tau $ from a process $x\in X$ to a process $y \in Y$ either $\Left(\tau)$ intersects $Z_X$ or $\Right(\tau)$ intersects $Z_Y$.
\end{definition}

\begin{theorem}[Spectral trek separation]\label{thm: trek separation}
    Let $G$ is an acyclic process graph, and $X,Y \subset V$ not necessarily disjoint subsets, and $(\Phi, \Omega)\in \R^\G_\stable$ an SVAR-parameter pair, then $[\sd(\Phi, \Omega)]_{X,Y}$ has rank less than or equal to $r$ if and only if there exist subsets $Z_X, Z_Y \subset V$ for which $|Z_X| + |Z_Y|\leq r$, and such that $(Z_X, Z_Y)$ t-separates $X$ from $Y$. Generically $[\sd_G]_{X,Y}$ is of rank  
    \begin{align*}
         \min \{ |Z_X| + |Z_Y| \mid (Z_X, Z_Y) \text{ t-separates $X$ from $Y$} \}.
    \end{align*} 
\end{theorem}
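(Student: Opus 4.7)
The plan is to adapt the trek separation theorem of Sullivant, Talaska and Draisma~\cite{sullivant2010trek} to the frequency domain, exploiting that by Remark~\ref{rem: rational spectum} we may regard $\sd_\G$ as a matrix in $\R(\Phi, \Omega, z)^{O \times O}$. The standard outline has two parts: prove the upper bound on the rank by an explicit low-rank factorisation of $[\sd]_{X,Y}$, and prove the lower bound by showing that an appropriate $(r+1)$-minor does not vanish in $\R(\Phi, \Omega, z)$ via a Lindström-Gessel-Viennot cancellation.

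For the ``if'' direction, suppose $(Z_X, Z_Y)$ t-separates $X$ from $Y$ with $|Z_X|+|Z_Y| \leq r$. By the spectral trek rule (Proposition~\ref{prop: trek rule}), every trek $\tau \in \treks(x,y)$ with $x\in X$, $y \in Y$ must meet $Z_X$ on $\Left(\tau)$ or $Z_Y$ on $\Right(\tau)$. Since $G$ is acyclic I can fix a topological order on $V$ and assign to each such trek a canonical crossing point $c(\tau)$: the node of $\Left(\tau)\cap Z_X$ closest to $\ttop(\tau)$, or, if $\Left(\tau) \cap Z_X = \emptyset$, the node of $\Right(\tau)\cap Z_Y$ closest to $\ttop(\tau)$. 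Grouping treks by $c(\tau)$ factors the sum in the trek rule as
\begin{equation*}
    [\sd]_{X,Y} \;=\; A\,B,
\end{equation*}
with $A \in \R(z)^{X \times (Z_X \sqcup Z_Y)}$ and $B \in \R(z)^{(Z_X \sqcup Z_Y) \times Y}$ formed out of sub-trek functions on the two sides of the crossing point (the internal spectrum factor is absorbed into one of them). The inner dimension is $|Z_X|+|Z_Y|\leq r$, so the rank is at most $r$ for every parameter pair.

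For the ``only if'' direction, I argue by contraposition: assume that every t-separating pair has size at least $r+1$. A Menger-type max-flow/min-cut argument (as in~\cite{sullivant2010trek}) then produces subsets $X' \subseteq X$, $Y' \subseteq Y$ of size $r+1$ together with a system of treks $(\tau_i)_{i=1}^{r+1}$ that pairs up $X'$ and $Y'$ via a bijection $\sigma$, and whose left sides are pairwise vertex-disjoint and whose right sides are pairwise vertex-disjoint. Expanding the determinant of the minor $[\sd]_{X',Y'}$ as a signed sum over permutations and then, through Proposition~\ref{prop: trek rule}, as a signed sum over all trek systems matching $X'$ to $Y'$, I would implement the LGV involution: whenever two trek systems differ only by a swap of the left-side portions past a shared intersection vertex (or of the right-side portions), they have opposite signs and cancel. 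What survives is the sum over ``non-side-intersecting'' trek systems, which is non-empty by the max-flow argument.

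The main obstacle is showing that this surviving sum is nonzero as an element of $\R(\Phi, \Omega, z)$. Two wrinkles beyond the classical STD proof need attention. First, each link function $\fsep_{v,w}$ is a rational function of $\Phi$ rather than a single indeterminate (cf.~\eqref{eq: transfer function}), so distinct trek systems do not automatically correspond to distinct monomials; one circumvents this by inspecting the top-degree coefficients in the contemporaneous parameters $\phi_{v,w}(0)$, which play the role of the SCM edge weights in~\cite{sullivant2010trek}. Second, trek functions involve a conjugated factor on the right, so the expansion lives in a ring of rational functions in both $z$ and $z^{-1}$; however, since by Proposition~\ref{prop: conjugation of rational functions} conjugation is an algebra involution and since the $\Phi$-indeterminates appearing on the left and right halves of a trek are formally independent of $z$, no accidental cancellation between the two halves can occur, and a uniquely identifiable Laurent monomial can be extracted. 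Combining both directions, the bound $|Z_X|+|Z_Y|$ holds pointwise and is generically achieved, which is precisely the rank formula claimed by the theorem; the generic statement follows because the non-vanishing of the chosen minor defines a Zariski open, hence full Lebesgue measure, subset of $\R^\G_\stable$.
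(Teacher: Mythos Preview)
Your overall strategy matches the paper's: both follow the Sullivant--Talaska--Draisma template, with the upper bound coming from a low-rank factorisation through $Z_X \sqcup Z_Y$ and the lower bound from Menger plus a Lindstr\"om--Gessel--Viennot cancellation. The paper packages the cancellation a little differently, however: rather than running LGV directly on trek systems, it first proves a spectral Gessel--Viennot lemma for $\det([(I-\Fsep)^{-1}]_{X,Y})$ alone (Proposition~\ref{prop: spectral gessel viennot lemma}), and then combines left and right sides via Cauchy--Binet (Proposition~\ref{prop: invertible subspectrum}). This separation makes your second wrinkle evaporate: conjugation never interacts with the cancellation argument, since Gessel--Viennot is applied to $(I-\Fsep)^{-1}$ and $(I-\Fsep^\ast)^{-1}$ separately, and by Proposition~\ref{prop: conjugation of rational functions} the latter is simply the image of the former under an automorphism of $\R(z)$. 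There is no need to reason about Laurent monomials.

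Your first wrinkle is where the proposal has an actual gap. You plan to isolate a non-cancelling term by looking at the contemporaneous parameters $\phi_{v,w}(0)$, but the generic-rank claim must hold for \emph{every} time series graph $\G \in \T(G)$, and nothing guarantees that $\G$ has any contemporaneous edges: if each link $v\to w$ on $G$ is realised in $\G$ only at positive lags, then $\phi_{v,w}(0)$ is not a free coordinate and your specialisation collapses. The paper's fix (in the proof of Proposition~\ref{prop: spectral gessel viennot lemma}) is to pick, for each edge $v\to w$ in the chosen non-intersecting system $\Pi_1$, the minimum-lag coefficient $\phi_{v,w}(m_{v,w})$ with $m_{v,w}=\min\{k : v\to_k w\}$, and to track the monomial $\prod_{v\to w\in\Pi_1}\phi_{v,w}(m_{v,w})\,z^{m_{v,w}}$ in $\R[\Phi,z]$; since distinct non-intersecting path systems use distinct edge sets, this monomial has coefficient one in exactly one summand. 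With that correction your argument goes through and is essentially equivalent to the paper's.
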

\begin{proof}
    The proof of this theorem follows by exactly the same arguments as in the case of SEM's \cite{sullivant2010trek}. The main result on which the proof is based is a spectral version of the Gessel-Viennot Lemma, which we will show in the next subsection.
\end{proof}  
\subsection{Spectral characterisation of process-level d-separation}
It is well known that for linear Gaussian SCMs conditional independence between variables are generically equivalent to $d$-separation of the corresponding nodes on the causal graph. We will here see that this observation generalises to SVAR processes. For the following, let $G=(V,D)$ be an acyclic process graph with underlying time series graph $\G\in\T(G)$. We begin by recalling the graphical notion of $d$-separation between nodes on directed graphs.
\begin{definition}[$d$-separation \cite{pearl2009causality, peters2017elements}]
    Let $G= (V,D)$ be an acyclic process graph and suppose $X,Y, Z \subset V$ are subsets of processes. The set $Z$ $d$-separates $X$ and $Y$ if every (not necessarily directed) path between a node $x\in X$ and $y\in Y$ is going through a node $k$ that is either: (1.) a non-collider node that belongs to $Z$, or (2.) a collider node that is not ancestral to any process in $Z$, i.e $k \in V \setminus \mathrm{anc}(C)$. 
    The node $k$ is called a collider if the path $\pi$ contains a link $v \to k$ and another link $w \to k $. Otherwise $k$ is a non-collider node on $\pi$. 
\end{definition}
We intend to characterise $d$-separation statements about the process graph by algebraic relations, namely conditional independence relations between processes. To define the conditional independence of SVAR processes, i.e. Gaussian processes in our case, we use the conditional spectrum as discussed in e.g. Chapter 8 in \cite{brillinger2001timeseries}. 
\begin{definition}[Spectral conditional independence]
    Let $\X$ be a SVAR process specified by the parameter configuration $(\Phi, \Omega)\in \R^\G_\stable$, and let $X,Y,Z \subset V$ be disjoint sets of process indices. Then the subprocesses $\X_X$ and $\X_Y$ are said to be \emph{conditionally independent} given $\X_Z$ if 
    \begin{align*}
        [\sd(\Phi, \Omega)]_{X,Y \mid Z} &\coloneqq[\sd(\Phi, \Omega)]_{X,Y} -  [\sd(\Phi,\Omega)]_{X,Z} [\sd(\Phi, \Omega)]_{Z,Z}^{-1} [\sd(\Phi, \Omega)]_{Z,Y}
    \end{align*}
    is the zero element in $\R(z)^{X \times Y}$. 
\end{definition}
It turns out that conditional independence statement between sub-processes are equivalently characterised by a rank condition on the spectrum.
\begin{proposition}
    The processes $\X_X$ and $\X_Y$ are conditionally independent given $\X_Z$ if and only if the rank of $[\sd(\Phi, \Omega)]_{X \cup Z, Y \cup Z}$ is equal to $|Z|$.
\end{proposition}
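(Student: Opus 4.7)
The plan is to reduce the claim to the standard Schur-complement rank identity applied over the field $\R(z)$. Arranging the submatrix in block form
\[
[\sd]_{X\cup Z,\, Y\cup Z} \;=\; \begin{pmatrix} [\sd]_{X,Y} & [\sd]_{X,Z} \\ [\sd]_{Z,Y} & [\sd]_{Z,Z} \end{pmatrix},
\]
I observe that the conditional subspectrum $[\sd]_{X,Y\mid Z}$ as defined above is exactly the Schur complement of the bottom-right block $[\sd]_{Z,Z}$ in this block matrix. Hence the natural strategy is to show that this Schur complement is well defined over $\R(z)$ and then invoke the block-rank formula $\rank(M) = \rank(D) + \rank(M/D)$.

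The main preliminary step I would carry out is verifying that $[\sd]_{Z,Z}$ is invertible as an element of $\R(z)^{Z\times Z}$, so that the Schur complement makes sense in the rational function field. This follows from the parameterisation (\ref{map: spectrum}): for every $z\in S^1$ one has $\sd(\Phi,\Omega;z) = (I-\Fsep(z)^\top)^{-1}\sdlint(z)(I-\Fsep(z)^\ast)^{-1}$, and since $\sdlint(z)$ was shown to be positive definite at every $z\in S^1$ and $(I-\Fsep)$ is invertible by construction, the full matrix $\sd(\Phi,\Omega;z)$ is positive definite at every point of the unit circle. In particular the principal submatrix $[\sd(\Phi,\Omega;z)]_{Z,Z}$ is positive definite pointwise, so $\det([\sd]_{Z,Z}) \in \R(z)$ is a nonzero rational function and $[\sd]_{Z,Z}$ is invertible in $\R(z)^{Z\times Z}$.

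With invertibility available, the standard Schur-complement rank identity (which holds over any field) yields
\[
\rank\bigl([\sd]_{X\cup Z,\, Y\cup Z}\bigr) \;=\; \rank\bigl([\sd]_{Z,Z}\bigr) \,+\, \rank\bigl([\sd]_{X,Y\mid Z}\bigr) \;=\; |Z| + \rank\bigl([\sd]_{X,Y\mid Z}\bigr).
\]
The equivalence then follows immediately in both directions: spectral conditional independence of $\X_X$ and $\X_Y$ given $\X_Z$ is by definition the vanishing of $[\sd]_{X,Y\mid Z}$, which is equivalent to $\rank([\sd]_{X,Y\mid Z}) = 0$, which is equivalent to $\rank([\sd]_{X\cup Z,\, Y\cup Z}) = |Z|$. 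The only step requiring genuine argument is the invertibility of $[\sd]_{Z,Z}$ over $\R(z)$; once this is in place the result is a direct application of a classical linear algebra identity lifted to the rational function field.
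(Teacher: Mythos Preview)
Your proposal is correct and follows essentially the same approach as the paper, which simply defers to Proposition~2.7 of \cite{SULLIVANT2008algebraic} carried out over the field $\R(z)$ instead of $\R$; that proposition is precisely the Schur-complement rank identity you invoke. Your explicit verification that $[\sd]_{Z,Z}$ is invertible in $\R(z)^{Z\times Z}$ via pointwise positive definiteness on $S^1$ is the only additional ingredient needed to transport the argument to the rational-function setting, and it is handled correctly.
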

\begin{proof}
    The proof goes along the same lines as the proof of Proposition 2.7 of \cite{SULLIVANT2008algebraic}. The only difference being that one reasons over the field $\R(z)$ instead of $\R$. 
\end{proof}
Note, if the time series graph $\G$ has order zero, then this notion of conditional independence boils down to the definition of conditional independence for normally distributed random variables \cite{SULLIVANT2008algebraic}.
We are going to use the following graphical result to characterise conditional independence of SVAR processes via $d$-separation on the process graph. 
\begin{lemma}[\cite{di2009t}]\label{lemma: d-t-separation}
    Let $G= (V, D)$ be a DAG, and $X, Y, Z\subset V$ disjoint sets of vertices. Then it holds that $Z$ $d$-separates $X$ and $Y$ if and only if there is a partition $Z = Z_X \cup Z_Y$ such that $(Z_X, Z_Y)$ $t$-separates $X \cup Z$ and $Y \cup Z$. 
\end{lemma}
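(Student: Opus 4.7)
The lemma is a biconditional, so I would prove the two directions separately, with the reverse direction handled cleanly by contraposition and the forward direction requiring an explicit construction of a witnessing partition.

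\textbf{Reverse direction ($\Leftarrow$).} By contrapositive, suppose $Z$ does not $d$-separate $X$ and $Y$; then there exists an active path $\pi$ from some $x \in X$ to $y \in Y$ given $Z$. I construct, for every candidate partition $Z = Z_X \sqcup Z_Y$, a trek from $X \cup Z$ to $Y \cup Z$ violating $t$-separation. If $\pi$ has no colliders, then $\pi$ itself is a trek from $x$ to $y$, and activity forces all non-colliders of $\pi$ to lie outside $Z$, so both sides miss $Z$ entirely. If $\pi$ has colliders $c_1, \ldots, c_k$, each $c_j \in \mathrm{anc}(Z)$, and I pick $\zeta_j \in Z$ as the first $Z$-vertex along a directed descendant path $\rho_j$ from $c_j$. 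The segments of $\pi$ between consecutive colliders are trek-shaped (no interior colliders); splicing these segments with the $\rho_j$'s yields a sequence of treks $\tau_0, \tau_1, \ldots, \tau_k$ with successive endpoints $x, \zeta_1, \ldots, \zeta_k, y$. A direct computation of each trek's $Z$-intersections shows that simultaneous $t$-separation would require $\zeta_1 \in Z_Y$ (from $\tau_0$), $\zeta_k \in Z_X$ (from $\tau_k$), and $\zeta_j \in Z_X$ or $\zeta_{j+1} \in Z_Y$ for each intermediate $j$; a telescoping pigeonhole over these constraints shows they are unsatisfiable under any partition.

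\textbf{Forward direction ($\Rightarrow$).} Assume $Z$ $d$-separates $X$ and $Y$. I would construct a partition by setting $Z_X := \{z \in Z : \text{there is a directed path from } z \text{ to } X \text{ whose interior avoids } Z\}$ and $Z_Y := Z \setminus Z_X$, and verify $t$-separation of $X \cup Z$ from $Y \cup Z$ by a case analysis. Fix a trek $\tau$ with top $t$ from $a \in X \cup Z$ to $b \in Y \cup Z$; the concatenated fork $a \leftarrow \cdots \leftarrow t \to \cdots \to b$ is collider-free. If $a \in X$ and $\Left(\tau)$ contains some $Z$-vertex, take the one closest to $a$, call it $z^*$: the sub-path $z^* \to \cdots \to a$ avoids $Z$ in its interior and ends in $X$, so $z^* \in Z_X$ and $\Left(\tau) \cap Z_X \neq \emptyset$; if $\Left(\tau)$ contains no $Z$-vertex (in particular, the top $t \notin Z$) and $b \in Y$, then blockage of the fork by $Z$ places some $Z$-vertex on $\Right(\tau)$, and the symmetric rightmost-argument puts it in $Z_Y$; otherwise the path is $Z$-free and active, contradicting $d$-separation. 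The cases where $a$ or $b$ itself lies in $Z$ are handled by the fact that such an endpoint contributes directly to the left or right path, combined with the same nearest-$Z$-vertex analysis applied to directed sub-paths of the trek.

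\textbf{Main obstacle.} The principal difficulty is in the forward direction, specifically the treatment of ``ambiguous'' $Z$-vertices that are ancestors of both $X$ and $Y$: a single choice of $Z_X$ versus $Z_Y$ for such a $z$ may satisfy some treks' constraints while violating others. Ensuring that the proposed rule yields a globally consistent partition hinges on exploiting $d$-separation to rule out the conflicting configurations; formalising this may require either a refinement of the partition rule (e.g.\ swapping ambiguous vertices guided by the witnesses blocking explicit $X$--$Y$ paths) or a direct combinatorial feasibility argument showing that the induced ``conflict graph'' of $Z$-vertices admits a valid 2-colouring.
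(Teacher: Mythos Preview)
The paper does not prove this lemma; it is cited from \cite{di2009t} and used as a black box in the proof of Theorem~\ref{thm: generically Markov}. So there is no ``paper's own proof'' to compare against, and your attempt must stand on its own merits.

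Your reverse direction is essentially correct: the telescoping argument over the chain of treks $\tau_0,\dots,\tau_k$ built from an active path does force an inconsistency for every partition.

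The forward direction, however, has a genuine gap --- and it is precisely the one you flag as the ``main obstacle'' without actually resolving. Your proposed rule $Z_X \coloneqq \{z \in Z : \text{there is a directed path } z \to \cdots \to x \in X \text{ with interior avoiding } Z\}$ does \emph{not} always yield a $t$-separating partition. Concretely, take $V=\{w,x,y,z_1\}$, $X=\{x\}$, $Y=\{y\}$, $Z=\{z_1\}$, with edges $w\to x$, $w\to z_1$, $z_1\to x$, $z_1\to y$. Then $Z$ $d$-separates $X$ and $Y$, and your rule places $z_1 \in Z_X$ (via the edge $z_1\to x$), so $Z_Y=\emptyset$. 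But the trek $x \leftarrow w \to z_1 \to y$ has $\Left(\tau)=\{w,x\}$ disjoint from $Z$ and $\Right(\tau)\cap Z_Y=\emptyset$, so $(Z_X,Z_Y)$ fails to $t$-separate. (The partition $(\emptyset,\{z_1\})$ \emph{does} work here, so the lemma is not at fault.) The step where you invoke ``the symmetric rightmost-argument'' to conclude $z^{**}\in Z_Y$ is exactly where this breaks: $Z_Y$ is defined as a complement, not by the mirror-image reachability condition, so exhibiting a $Z$-avoiding path from $z^{**}$ into $Y$ tells you nothing about membership in $Z_Y$.

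To make the forward direction go through you need a different construction. One standard route is via moralisation: $d$-separation by $Z$ is equivalent to ordinary graph separation by $Z$ in the moral graph of the ancestral closure of $X\cup Y\cup Z$; one then takes $Z_X$ to be the vertices of $Z$ lying in the $X$-component (or reachable from $X$) in that moral graph after deleting $Z$, and $Z_Y$ the rest. Alternatively, one can argue via a min-cut/max-flow characterisation on the auxiliary ``trek graph'' used in \cite{sullivant2010trek}. Either way, the partition has to be built from a global separation witness rather than from local directed reachability into $X$ alone.
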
  
\begin{theorem}\label{thm: generically Markov}
    Let $G= (V,D)$ be an acyclic process graph and $X,Y,Z\subset V$. Then it holds for generic parameter choices $(\Phi, \Omega)$ that the sub-processes $\X_X$ and $\X_Y$ are conditionally independent given $\X_Z$ if and only if $Z$ $d$-separates $X$ from $Y$.  
\end{theorem}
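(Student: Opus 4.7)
The plan is to translate both sides of the equivalence into a common condition on the generic rank of the submatrix $M \coloneqq [\sd]_{X \cup Z, Y \cup Z}$, and then to apply the spectral trek separation theorem (Theorem \ref{thm: trek separation}) and Lemma \ref{lemma: d-t-separation}. First I would invoke the preceding Proposition to rewrite the conditional-independence statement $\X_X \perp \X_Y \mid \X_Z$ as the equality $\rank(M) = |Z|$. Theorem \ref{thm: trek separation} then identifies the generic rank of $M$ with the number
\begin{equation*}
  r^* \;=\; \min\bigl\{\, |Z_X| + |Z_Y| \;:\; (Z_X, Z_Y) \text{ t-separates } X\cup Z \text{ from } Y \cup Z \,\bigr\},
\end{equation*}
so the problem reduces to showing that $r^* = |Z|$ if and only if $Z$ $d$-separates $X$ from $Y$.

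The bridge between the two is a rigidity observation for optimal t-separators of $(X \cup Z, Y \cup Z)$: every such $(Z_X, Z_Y)$ must contain $Z$ in its union, and whenever $|Z_X| + |Z_Y| = |Z|$ the pair $(Z_X, Z_Y)$ is in fact a partition of $Z$. To see this, note that for each $z \in Z$ the trivial trek with left path, top, and right path all equal to $(z)$ is a trek from $z \in X \cup Z$ to $z \in Y \cup Z$; blocking it forces $z \in Z_X \cup Z_Y$. Hence $Z \subseteq Z_X \cup Z_Y$, which together with the inclusion-exclusion identity $|Z_X| + |Z_Y| = |Z_X \cup Z_Y| + |Z_X \cap Z_Y|$ yields the lower bound $r^* \geq |Z|$ and, in the equality case, that $Z_X \cap Z_Y = \emptyset$ and $Z_X \cup Z_Y = Z$.

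With this in hand, both directions of the theorem follow from Lemma \ref{lemma: d-t-separation}. If $Z$ $d$-separates $X$ from $Y$, the lemma furnishes a partition $Z = Z_X \cup Z_Y$ that t-separates $X \cup Z$ from $Y \cup Z$, giving $r^* \leq |Z|$ and hence $r^* = |Z|$. Conversely, if $r^* = |Z|$ then by the rigidity observation any minimiser is a partition of $Z$ that t-separates $X \cup Z$ from $Y \cup Z$, and the lemma yields $d$-separation. The main obstacle I expect is the bookkeeping around genericity: the equality $\rank(M) = r^*$ holds off a measure-zero set, and invoking the conditional-independence proposition additionally requires $[\sd]_{Z,Z}$ to be generically invertible; both exceptional sets are zero sets of nonzero polynomials in $(\Phi, \Omega)$, so their union remains Lebesgue-null and the equivalence truly holds for generic $(\Phi, \Omega)$.
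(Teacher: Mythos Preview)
Your proposal is correct and follows essentially the same route as the paper: convert conditional independence to the rank condition $\rank([\sd]_{X\cup Z, Y\cup Z})=|Z|$ via the preceding Proposition, invoke Theorem~\ref{thm: trek separation} to equate the generic rank with the minimal $t$-separator size, and then use Lemma~\ref{lemma: d-t-separation} together with the observation that any $t$-separator of $(X\cup Z, Y\cup Z)$ must contain $Z$. Your version is in fact slightly more careful than the paper's in that you explicitly justify $Z\subseteq Z_X\cup Z_Y$ via trivial treks and use inclusion--exclusion to deduce that the minimiser is a genuine partition of $Z$ (disjointness), which is what Lemma~\ref{lemma: d-t-separation} requires.
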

\begin{proof}
     Suppose that the processes $\X_X$ and $\X_Y$ are independent given $\X_Z$ for generic parameter configurations. Hence, the generic rank of $[\sd_\G]_{X\cup Z,Y\cup Z}$ is equal to $|Z|$. Theorem \ref{thm: trek separation} now implies that there is a tuple of sets $(Z_X, Z_Y)$ that $t$-separates $X\cup Z$ and $Y\cup Z$, and that satisfies $|Z_X| + |Z_Y| = |Z|$. The requirement $Z \subset Z_X \cup Z_Y$ implies that $Z = Z_X \cup Z_Y$. It now follows from Lemma \ref{lemma: d-t-separation} that $Z$ $d$-separates $X$ and $Y$. 
     On the other hand, assume $X$ and $Y$ are $d$-separated by $Z$, then by Lemma \ref{lemma: d-t-separation} there is $(Z_X, Z_Y)$ which $t$-separates $X\cup Z$ and $Y\cup Z$ such that $Z = Z_X \cup Z_Y$. Consequently, it holds generically that $\rank([\sd_G]_{X \cup Z, Y \cup Z}) \leq |Z|$. On the other hand it must hold that $Z \subset Z_X' \cup Z_Y'$ for any $(Z_X', Z_Y')$ that $t$-separates $Z \cup X$ and $Z \cup Y$ so that $\rank([\sd_\G]_{X \cup Z, Y \cup  Z}) \geq |Z|$. This finishes the proof.   
\end{proof}
For a time series graph $\G\in \T(G)$, a parameter pair $(\Phi, \Omega) \in \R^\G_\stable$ is said to define a stationary distribution that is faithful with respect to the process graph $G$ if for any three disjoint sets $X,Y, Z \subset V$ it holds that $X,Y$ are $d$-separated by $Z$ if and only if $\X_X$ and $\X_Y$ are conditionally independent given $\X_Z$. Otherwise $(\Phi, \Omega)$ is said to define an unfaithful distribution with respect to $G$. Theorem \ref{thm: generically Markov} implies that the set of parameter pairs defining an unfaithful stationary distribution is a zero set with respect to the Lebesgue measure. In particular, it is generically possible to derive the Markov equivalence  class \cite{pearl2009causality, spirtes2000causation, peters2017elements} of the process graph from the spectrum of the process.

\subsection{Spectral Gessel Viennot Lemma}
The proof of the trek separation theorem in the original paper \cite{sullivant2010trek} is based on the classical Gessel-Viennot lemma \cite{lindström1973matroids, GESSEL1985binomial}. In the following we will see that this lemma generalises to SVAR processes represented in the frequency domain, i.e. we establish a graphical characterisation of $\det([(I-H)^{-1}]_{X,Y}) \in \R(z)$, where $X,Y \subset V$ are equal-sized subsets. This description is essential for the proof of Theorem \ref{thm: trek separation} and also for proving graphical identifiability criteria for the link functions of SVAR processes, as we will see in the next section. For the rest of this subsection we assume that the process graph $G$ is acyclic. 

Suppose $X = \{x_1, \dots, x_r\}$ and $Y= \{y_1, \dots, y_r\} \subset V$ are equal-sized sets of process indices. A system of directed paths from $X$ to $Y$ is a collection of directed paths $\Pi= (\pi_1, \dots, \pi_r)$ such that for any two distinct paths $\pi_i, \pi_j \in \Pi$ it holds that they do not have the same starting or end point. In particular, the system $\Pi$ gives rise to a bijection from $X$ to $Y$, which maps $x \in X$ to the endpoint $y \in Y$ of the unique path $\pi \in \Pi$ with $x$ as its starting point. Given the ordering of $X$ and $Y$, $\Pi$ induces a permutation $\alpha$ on the set $\{1, \dots, r\}$ where $\alpha(i)=j$ if the unique path $\pi \in \Pi$ starting at $x_i$ ends at $y_j$. In the following, we write $\sgn(\Pi)$ for the sign of the associated permutation $\alpha$.

A system of directed paths $\Pi$ is said to be \emph{non-intersecting} if any two paths in $\Pi$ do not share a single vertex, i.e. for any distinct $\pi, \pi' \in \Pi$ it holds that $V(\pi) \cap V(\pi') = \emptyset$, where $V(\pi)$ is the set of vertices visited by $\pi$. In the following we denote as $\mathbf{P}(X,Y)$ the set of all systems of non-intersecting directed paths from $X$ to $Y$, and for $\Pi \in \paths(X,Y)$ we define 
\begin{align*}
    \Fsep^{(\Pi)} &\coloneqq \prod_{\pi \in \Pi} \fsep^{(\pi)}.
\end{align*}
The set $\mathbf{P}(X,Y)$ is finite if $G$ is acyclic but can be infinite if $G$ contains cycles. If $X= \{x\}$ and $Y =\{y\}$, then we abbreviate $\mathbf{P}(\{x\},\{y\})$ by $\mathbf{P}(x,y)$, which is simply the set of all directed paths starting at $x$ and ending at $y$. 
\begin{proposition}[Spectral Gessel-Viennot Lemma]\label{prop: spectral gessel viennot lemma}
     Let $G$ be a directed acyclic process graph and $\G \in \T(G)$, and $X, Y \subset V$ two subsets of equal size. Then it holds that 
     \begin{align*}
         \det([I - \Fsep]^{-1}_{X,Y}) &= \sum_{\Pi \in \mathbf{P}(X,Y)} \sgn(\Pi)\Fsep^{(\Pi)}.
     \end{align*}
     In particular, $\det([I- \Fsep]_{X,Y}^{-1})$ is zero (considered as an element in the field of rational functions $\R(\Phi, z)$) if and only if every system of directed paths from $X$ to $Y$ has an intersection.
\end{proposition}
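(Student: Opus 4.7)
My plan is to adapt the classical Lindström--Gessel--Viennot argument to the rational-function setting over $\R(z)$. Since $G$ is acyclic, the matrix $\Fsep \in \R(z)^{V \times V}$ satisfies $\Fsep^k = 0$ for $k$ strictly larger than the length of the longest directed path in $G$. Consequently, the Neumann series $\sum_{k \geq 0} \Fsep^k$ is a finite sum and provides an explicit inverse of $I - \Fsep$ in $\R(z)^{V \times V}$. Reading off entries, the $(v,w)$-entry of $\Fsep^k$ equals $\sum_{\pi} \fsep^{(\pi)}$ summed over directed paths from $v$ to $w$ of length exactly $k$, so the first step would be to record the single-pair identity
\begin{equation*}
\bigl[(I - \Fsep)^{-1}\bigr]_{v,w} = \sum_{\pi \in \paths(v,w)} \fsep^{(\pi)}.
\end{equation*}

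Next I would apply the Leibniz formula to the submatrix determinant and substitute the path-sum expression for each entry. Distributing the product over the sums and reversing the order of summation yields
\begin{equation*}
\det\bigl([I - \Fsep]^{-1}_{X,Y}\bigr) = \sum_{\Pi} \sgn(\Pi)\,\Fsep^{(\Pi)},
\end{equation*}
where $\Pi = (\pi_1, \dots, \pi_r)$ now ranges over \emph{all} systems of directed paths from $X$ to $Y$ (possibly sharing vertices) and $\sgn(\Pi)$ denotes the sign of the induced bijection between the ordered sets $X$ and $Y$.

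The crucial step is the standard sign-reversing involution on intersecting path systems. Fixing total orderings on $V$ and on $\{1,\dots,r\}$, for any system $\Pi$ that is not in $\paths(X,Y)$ I would select the lexicographically smallest pair $(i,j)$ such that $\pi_i$ and $\pi_j$ share a vertex, let $v$ be the first such common vertex encountered along $\pi_i$, and swap the sub-paths of $\pi_i$ and $\pi_j$ from $v$ onward to obtain a new system $\Pi'$. The swap transposes the images of $x_i$ and $x_j$ in the induced permutation, hence $\sgn(\Pi') = -\sgn(\Pi)$, while the multiset of edges traversed is preserved, so $\Fsep^{(\Pi')} = \Fsep^{(\Pi)}$. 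The main technical check, and the step I expect to carry most of the work, is verifying that this pairing is genuinely an involution, i.e.\ that the minimal pair and first meeting vertex extracted from $\Pi'$ reproduce $(i,j)$ and $v$; this follows from the minimality choices in the construction by the same bookkeeping as in the SEM case treated in \cite{sullivant2010trek}. After cancellation only systems in $\paths(X,Y)$ survive, giving the claimed formula.

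For the ``in particular'' statement, if $\paths(X,Y) = \emptyset$ the right-hand side is a sum over an empty index set and hence vanishes. Conversely, if $\paths(X,Y) \neq \emptyset$, I would establish non-vanishing as an element of $\R(\Phi, z)$ by a generic specialization argument: setting every auto-coefficient $\phi_{v,v}(k)$ to zero collapses each link function $\fsep_{v,w}$ to the polynomial $\varphi_{v,w}$, whose indeterminates $\{\phi_{v,w}(k)\}_k$ are disjoint across distinct edges. Distinct non-intersecting systems then correspond to products with distinct edge supports, so their contributions cannot cancel and the sum is non-zero.
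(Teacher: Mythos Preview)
Your proposal is correct and follows essentially the same Gessel--Viennot paradigm as the paper, but the two are organised differently. The paper proceeds by induction on $|X|$, expanding $\det([I-\Fsep]^{-1}_{X,Y})$ along the first row so that, by the inductive hypothesis, each cofactor is already a signed sum over non-intersecting systems in $\paths(X_1,Y_j)$; the tail-swapping involution is then only applied between the single new path $\pi$ from $x_1$ and a non-intersecting background system $\Pi$. Your direct route expands the full Leibniz sum at once and applies the global sign-reversing involution on all intersecting systems; this is the textbook LGV argument and is arguably cleaner, though the involution bookkeeping is slightly heavier (as you rightly flag). For the non-vanishing claim, the paper clears the common denominators $\prod_v(1-\varphi_{v,v})$ and exhibits an explicit monomial $\prod_{v\to w\in\Pi_1}\phi_{v,w}(m_{v,w})z^{m_{v,w}}$ that survives in exactly one summand of the numerator, whereas you achieve the same end by specialising all auto-coefficients $\phi_{v,v}(k)$ to zero so that the denominators collapse; both arguments ultimately rest on the fact that distinct non-intersecting systems in an acyclic graph have distinct edge sets, which you should state explicitly. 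One caution: your particular description of the involution (lexicographically smallest intersecting pair $(i,j)$, then first vertex along $\pi_i$ common to $\pi_j$) is not literally involutive in general---after a swap the minimal intersecting pair can change---so when you fill in the details make sure to use one of the standard canonical choices (e.g.\ smallest $i$ whose path meets another, then first vertex on $\pi_i$ shared with \emph{any} other path, then smallest such $j$).
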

The classical Gessel-Viennot Lemma is a consequence of the spectral Gessel-Viennot Lemma. In fact, this follows from the restriction to SVAR processes that have order zero. 
An important implication of the Gessel-Viennot Lemma is a graphical criterion for deciding whether a given submatrix of the covariance of a linear Gaussian model is invertible or not. Analogously, one can apply the same criterion to the process graph to check whether a certain submatrix of the spectrum is generically invertible (as a matrix over the field of rational functions). The following proposition gives the exact criterion. However, in order to state it, we first need to introduce the necessary notations.  

Suppose $T = ( \tau_1, \dots, \tau_n )$ is a system of treks. The right-hand side of $T$ is the system of directed paths $\Right(T) = (\Right(\tau_i))_{i \in [n]}$, and the left-hand side is the system $\Left(T)=(\Left(\tau_i))_{i \in [n]}$. Finally, the top of the system $T$ is given by the vertices $\ttop(T)= \{\ttop(v_i)\}_{i \in [n]}$. A system of treks is said to have a sided intersection if its right or left hand side is a system of intersecting directed paths. For subsets $X, Y\subset V$ we denote by $\mathbf{T}(X,Y)$ the set of all such trek systems $T$ that satisfy $\Right(T) \in \mathbf{P}(\ttop(T), Y)$ and $\Left(T) \in \mathbf{P}(\ttop(\T), X)$. That means, the right and left sides are systems of non-intersecting directed paths, i.e. $\treks(X,Y)$ is the set of trek systems without sided intersection.
  
\begin{proposition}[Spectral version of Proposition 3.4 in \cite{sullivant2010trek}]\label{prop: invertible subspectrum}
    Let $X, Y$ be subsets of $V$ with equal cardinality $r$. It then holds that $\det([\sd]_{X,Y}) = 0 \in \R(\Phi, \Omega, z)$ if and only if $\mathbf{T}_0(X,Y) = \emptyset$, i.e. there is no trek from $X$ to $Y$ without sided intersection. 
\end{proposition}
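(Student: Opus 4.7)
The plan is to prove the equivalence by a Cauchy-Binet expansion of $\det([\sd]_{X,Y})$, then apply the Spectral Gessel-Viennot Lemma to each resulting factor, and finally invoke algebraic independence of the noise variances to rule out inter-term cancellations.

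First, I would use the representation $\sd=(I-\Fsep)^{-\top}\sdint(I-\Fsep^\ast)^{-1}$ from (\ref{eq: structured spectrum}) to write $[\sd]_{X,Y}$ as a product of three $|V|\times|V|$ matrices restricted to rows $X$ and columns $Y$. Since $\sdint$ is diagonal, Cauchy-Binet collapses the two summations over intermediate subsets into a single sum and yields
\begin{equation*}
    \det([\sd]_{X,Y}) \;=\; \sum_{U\subset V,\; |U|=r} \det\!\bigl([(I-\Fsep)^{-1}]_{U,X}\bigr) \cdot \Bigl(\prod_{u\in U}\sdint_u\Bigr) \cdot \det\!\bigl([(I-\Fsep^\ast)^{-1}]_{U,Y}\bigr).
\end{equation*}

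Second, I would apply the Spectral Gessel-Viennot Lemma (Proposition \ref{prop: spectral gessel viennot lemma}) separately to the two outer determinants. The left factor equals a signed sum over non-intersecting directed path systems in $\paths(U,X)$, and analogously the right factor equals a signed sum over non-intersecting path systems in $\paths(U,Y)$ (with $\Fsep$ replaced by $\Fsep^\ast$). Combining a left system $L$, a top tuple $U$, and a right system $R$ produces exactly a trek system in $\mathbf{T}_0(X,Y)$ with $\ttop(T)=U$, and its contribution is $\sgn(L)\sgn(R)\sd^{(T)}=\sgn(T)\sd^{(T)}$. This gives the geometric formula
\begin{equation*}
    \det([\sd]_{X,Y}) \;=\; \sum_{T\in \mathbf{T}_0(X,Y)} \sgn(T)\,\sd^{(T)},
\end{equation*}
which immediately proves the ``if'' direction: if $\mathbf{T}_0(X,Y)=\emptyset$ the right-hand side is empty.

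Third, I would rule out cancellations between distinct summands to obtain the ``only if'' direction. Recall $\sdint_u=\omega_u\cdot r_u(\Phi,z)$ with $r_u$ depending only on $\varphi_{u,u}$, so the $U$-term has the form $\bigl(\prod_{u\in U}\omega_u\bigr)\cdot Q_U(\Phi,z)$. Because the $\omega_v$ are algebraically independent indeterminates, distinct subsets $U$ contribute distinct squarefree $\omega$-monomials, which cannot cancel one another in $\R(\Phi,\Omega,z)$. Hence $\det([\sd]_{X,Y})=0$ if and only if $Q_U(\Phi,z)=0$ for every $U$ of size $r$, and by Proposition \ref{prop: spectral gessel viennot lemma} this is equivalent to $\paths(U,X)=\emptyset$ or $\paths(U,Y)=\emptyset$ for each such $U$, i.e.\ the non-existence of any trek system from $X$ to $Y$ without sided intersection.

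The main obstacle I foresee is the sign-bookkeeping in the second step: one has to verify that the independent Gessel-Viennot signs on the left and right sides combine into the sign $\sgn(T)$ determined by the induced permutation $X\to Y$, taking into account an arbitrary ordering of $U$ that cancels when one multiplies the two determinants. This is elementary but must be done carefully, and it follows exactly the pattern of the corresponding proof in Sullivant, Talaska, and Draisma.
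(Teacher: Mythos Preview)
Your proposal is correct and follows essentially the same approach as the paper: both expand $\det([\sd]_{X,Y})$ via Cauchy--Binet, reduce each summand to a product of two Gessel--Viennot determinants and a diagonal $\sdint$ factor, and then argue that the whole sum vanishes iff every summand vanishes (the paper states this without elaboration, while you make the reason explicit via the distinct squarefree $\omega$-monomials). The trek-sum formula you derive en route is exactly equation~(\ref{eq: spectral sum of trek functions}), which the paper records as a corollary immediately after its proof.
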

\begin{proof}
    For the proof of this Proposition it will be useful to expand the determinant $\det([\sd]_{X,Y})$ using the Cauchy-Binet formula, i.e. 
    \begin{align}\label{eq: cauchy-binet expansion spectrum}
        \det([\sd]_{X,Y}) &= \sum_{S \subset V : |S|=r} \det([(I- H)^{-1}]_{X,S}) \det([\sdint]_S) \det([(I - H^\ast)^{-1}]_{S,Y}).
    \end{align}
    This expansion implies that $\det([\sd]_{X,Y})$ considered as a rational function in $\R(\Phi, \Omega, z)$ is zero if and only if every summand is zero, and each summand is zero if and only if either $\det([(I-H)^{-1}]_{X, S})$ or $\det([(I-\Fsep)^{-1}]_{S,Y})$ is zero.
    
    First, we assume that $\det([\sd]_{X,Y}) = 0$, and we wish to show that any system of treks $T$ from $X$ to $Y$ has a sided intersection.  
    If $|\ttop(T)| < |X|$, then it follows that both $\Left(T)$ and $\Right(T)$ must be systems of intersecting directed paths and, therefore, $T$ has a sided intersection. If, on the other hand, $|\ttop(T)| = |X|$, then it follows by the above expansion that $\det([(I-H)^{-1}]_{X, \ttop(T)})$ or $\det([(I-H^\ast)^{-1}]_{\ttop(T), Y})$ must be zero. So Proposition \ref{prop: spectral gessel viennot lemma} implies that $\Left(T)$ or $\Right(T)$ must be a system of intersecting directed paths, and we conclude $\mathbf{T}_0(X,Y) = \emptyset$. 

    Now suppose that every system of paths from $X$ to $Y$ has a sided intersection. We show by contradiction that $\det([\sd]_{X,Y})$ is zero. Suppose there is a set $S$ such that both $\det([(I-H)^{-1}]_{X,S})$ and $\det([(I-\Fsep)^{-1}]_{S,Y})$ are non-zero, then according to Proposition \ref{prop: spectral gessel viennot lemma} there must be at least one trek $T \in \mathbf{T}_0(X,Y)$ with $\ttop(T) = S$. Since this contradicts the assumption that every system of treks from $X$ to $Y$ has a sided intersection, we conclude that $\det([\sd]_{X,Y})$ is the zero element in $\R(\Phi, \Omega, z)$.
\end{proof}

For subsets $X, Y \subset V$ of equal size $r$, the combination of equation (\ref{eq: cauchy-binet expansion spectrum}) and Proposition \ref{prop: spectral gessel viennot lemma} yields  
\begin{align}\label{eq: spectral sum of trek functions}
    \det([\sd]_{X,Y}) &= \sum_{T \in \treks(X,Y)} \sgn(T) \sd^{(T)}, 
\end{align}
where for $T= (\tau_1, \dots, \tau_r) \in \treks(X,Y)$ a system of treks without sided intersection and $\sd^{(T)} = \prod_{i=1}^r\sd^{(\tau_i)}$ the associated product of trek functions and $\sgn(T)$ the sign of the permutation associated with $T$.  

\section{Rational identifiability of link functions}\label{sec: rational identifiability}
In this section, we introduce a notion with which to reason about whether the direct transfer function, i.e. causal effects in the frequency domain, can be identified as rational expressions of the entries in the spectrum. With this notion it is then possible to generalise graphical identifiability criteria for linear Gaussian causal models to the frequency domain representation of SVAR processes at the level of the process graph. As an illustration, we show that the latent factor half-trek criterion \cite{10.1214/22-AOS2221}, one of the most recent graphical identifiability criteria, can also be applied to the process graph to decide whether certain link functions are generically identifiable. 

In this section, we consider time series graphs equipped with a latent structure that satisfy Assumption \ref{assumption: latent structure}. In particular, the set of process indices $V$ is partitioned into a set of observed processes $O$ and a set of latent processes $L$, i.e. $V = O \cup L$ and $L \cap O = \emptyset$. In principal, we would like to call a link between two observed process identifiable if there is rational function that maps the spectrum of the process to the link function of that edge, and the form of this rational expression should be independent of the underlying time series graph. Such a function can always be found if there are no latent processes. In this case the link functions are identified through regression on the parent processes, i.e. for every $w\in V$ it holds that
\begin{align*}
   [\Fsep]_{\pa(w),w} &= [\sd]_{\pa(w), \pa(w)}^{-1} [\sd]_{\pa(w),w}.
\end{align*}
In the scenario where latent processes confound the relations between observed processes it depends on the shape of the process graph whether the link functions can be recovered from the spectrum for almost all parameter configurations (that define a stable SVAR process). 
\begin{example}\label{ex: instrumental process}
    In the process graph of Figure \ref{fig:tsg_example}, it holds that the link function for the link $\fsep_{v,w}$ can be identified for almost all parameter choices through a rational expression on the spectrum. This follows from the identity
    \begin{align*}
        \fsep_{v,w} &= \frac{\sd_{w,u}}{\sd_{v,u}} = \frac{\fsep_{v,w}\fsep_{u,v}\sdlint_u}{\fsep_{u,v}\sdlint_u}, 
    \end{align*}
    which holds for every SVAR parameter configuration that yields a non-zero function $\sd_{u,v}$. Due to Theorem \ref{thm: trek separation} the term $\sd_{u,v}$ is generically non-zero, i.e. $h_{v,w}$ is identifiable for almost all SVAR parameter configurations (that define a stable process). In particular, we found a rational expression for the link function $\fsep_{v,w}$ in the entries of the spectrum $\sd$ that holds irrespective of the underlying time seeries graph. 
\end{example}
The following definition is based on the definition of \emph{rational identifiability} as used in e.g. \cite{10.1214/22-AOS2221, foygel2012half, drton2018algebraic}.
\begin{definition}[Rational identifiability]
    A link $v \to w$ between two observed processes $v,w \in O$ is called \emph{rationally identifiable} if there is a rational function $\psi_{v,w}: \pd_{O, \stable}(\R(z)) \to \R(z)$, and for every time series graph $\G \in \T(G)$ a proper algebraic set $A_\G \subset \overline{\R^\G_\stable}$ such that $\psi \circ \sd_\G(\Phi, \Omega) = \fsep_{v,w}(\Phi)$
    for all $(\Phi, \Omega) \in \R^\G_\stable \setminus A_\G$. In particular, if the link $v \to w$ is rationally identifiable, then it is identifiable for almost all SVAR parameter configurations.  
\end{definition}
 A link is rationally identifiable if its link function can be expressed through addition, multiplication and division of the entries (rational functions) in the spectrum, as we saw in Example \ref{ex: instrumental process}. The requirement that $\psi_{v,w}$ is a rational function defined on $\pd_{O, \stable}(\R(z))$ makes sure that the expression with which to identify the link function $\fsep_{v,w}$ is independent of the underlying time series graph. In particular, we do not need to know the specific time series graph in order to identify the link function. But if we happen to know the time series graph $\G \in \T(G)$ as well, then Lemma \ref{lemma: injectivity parameterisation} implies that in the generic case the parameters $\phi_{v,w} =(\phi_{v,w}(k))_{k: v \to_k w}$ and $\phi_{w,w}=(\phi_{w,w}(j))_{j:w \to_j w}$ can also be uniquely recovered.

Before we proceed with the formal latent factor half-trek criterion we revisit an example from \cite{10.1214/22-AOS2221} to motivate the idea behind it.
\begin{figure}
    \centering
    \includegraphics[width=0.65\textwidth]{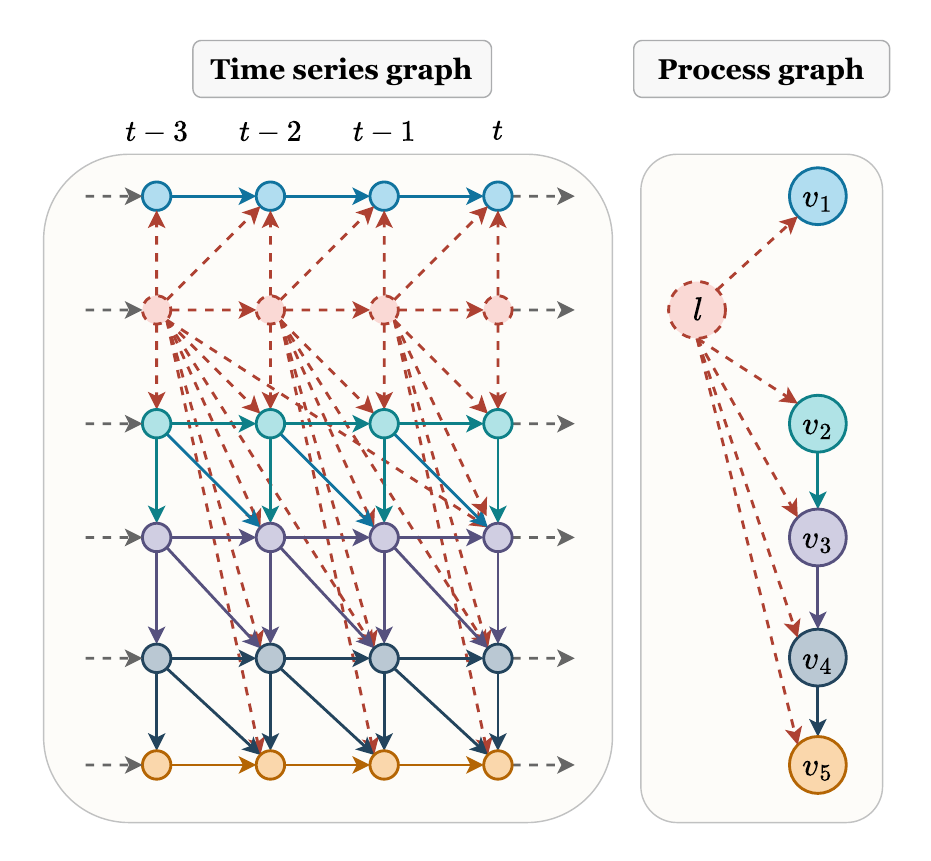}
    \caption{A time series graph (left) together with its process graph (right), see Figure 3.1 a.) in \cite{10.1214/22-AOS2221}. All link functions between the observed processes (represented by the blue nodes) are rationally identifiable. Furthermore, the graph satisfies the latent factor half-trek criterion}
    \label{fig: lfhtc_example}
\end{figure}

\begin{example}[Time series version of Example 3.1 in \cite{10.1214/22-AOS2221}]\label{ex: rational identifiability}
    We consider an SVAR process $\X$ consisting of six processes. The causal structure between these processes is specified by the process graph and its underlying time series graph, as shown in Figure \ref{fig: lfhtc_example}. Furthermore, the process has a latent structure, i.e. there are five observed processes indexed by $O = \{v_1, \dots, v_5\}$ and one latent process $h$. Since the latent process affects each observed process, none of the links between observed processes can be computed by ordinary regression because
    \begin{align*}
        \sd_{v_{i+1}, v_{i}} &= \fsep_{v_{i}, v_{i+1}} \sd_{v_i, v_i} +  \sdlint_{v_{i+1}, v_i}. 
    \end{align*}
    Nonetheless, every link between observed processes can be identified through rational operations on the spectrum. We begin with the link function $\fsep_{v_3, v_4}$. The basic idea is now to use the fact that all the observed processes are confounded by a common latent process. This induces relations that can be exploited. To see this, we employ the trek rule and compute  
    \begin{align*}
        \sd_{v_4, v_2} &= \fsep_{v_4, v_3}\sd_{v_3,v_2} + \sdlint_{v_4,v_2} \\
        \sd_{v_4, v_3} &= \fsep_{v_4, v_3}\sd_{v_3, v_3} + \sdlint_{v_4, v_3} + \sdlint_{v_4, v_2} \fsep_{v_2, v_3}^\ast
    \end{align*}
    In addition, we use the trek rule to observe the relations
    \begin{align*}
        \frac{\fsep_{l,v_4}}{\fsep_{l,v_1}} \sd_{v_1, v_2} &= \sdlint_{v_4, v_2} & \frac{\fsep_{l,v_4}}{\fsep_{l,v_1}} \sd_{1,3} &= \sdlint_{v_4, v_3} + \sdlint_{v_4, v_2}\fsep_{v_2, v_3}^\ast.
    \end{align*}
    Finally, these relations can be summarised by the linear equation system over the rational functions  
    \begin{align*}
        \begin{bmatrix}
            \sd_{v_4, v_2} \\ \sd_{v_4, v_3}
        \end{bmatrix} &= \begin{bmatrix}
            \sd_{v_3,v_2} & \sd_{v_1, v_2} \\ \sd_{v_3,v_3} & \sd_{v_1,v_3} 
        \end{bmatrix} \begin{bmatrix}
            \fsep_{v_3, v_4} \\ \alpha_{v_3,v_4}
        \end{bmatrix} & \alpha_{v_3,v_4} &= \frac{\fsep_{l,v_4}}{\fsep_{l,v_1}}.
    \end{align*}
    The matrix on the right-hand side of the equation is generically invertible as can be seen from Proposition \ref{prop: invertible subspectrum} and the fact that $\{v_2 \leftarrow h \to v_1 ,  v_3 \} $ is a system of treks from $\{v_2, v_3\}$ to $\{ v_3, v_1 \}$. This means that for generic parameter choices it is possible to obtain the vector on the right by matrix inversion, i.e. rational operations on the spectrum. In particular, the link function $\fsep_{v_3,v_4}$ is rationally identifiable. 
    Analogously, it is possible to see that the link function $\fsep_{v_4, v_5}$ is rationally identifiable. 

    We now consider the link function $\fsep_{v_2, v_3}$. Once more we use the trek rule to identify the equations
    \begin{align*}
        \sd_{v_3, v_1} &= \fsep_{v_2, v_3} \sd_{v_2,v_1} + \sdlint_{v_3, v_1} &
        \sd_{v_3, v_2} &= \fsep_{v_2, v_3} \sd_{v_2, v_2} + \sdlint_{v_3, v_2}.
     \end{align*}
    In order to apply a similar argument as before we would like to express the second terms in the sum as a linear factor of something that is rationally identifiable. To do this we observe that 
    \begin{align*}
        \sd_{v_4, v_1} - \fsep_{v_3, v_4} \sd_{v_3, v_1} &= \sdlint_{v_4, v_1} &
        \sd_{v_4, v_2} - \fsep_{v_3, v_4} \sd_{v_3, v_2}  &= \sdlint_{v_4, v_2}.
    \end{align*}
    From what we have seen above, the expressions on the left hand side, i.e. $\sdlint_{v_4, v_1}$ and $\sdlint_{v_4, v_2}$ respectively, are rationally identifiable, and these two terms are linearly related to $\sdlint_{v_3, v_1}$ and $\sdlint_{v_3, v_2}$ respectively. So we end up with the system of linear equations 
    \begin{align*}
        \begin{bmatrix}
            \sd_{v_3, v_1} \\ \sd_{v_3, v_2}
        \end{bmatrix} &= \begin{bmatrix}
            \sd_{v_2,v_1} & \sdlint_{v_4, v_1} \\ \sd_{v_2, v_2}  & \sdlint_{v_4, v_2}
        \end{bmatrix} \begin{bmatrix}
                \fsep_{v_2, v_3} \\ \alpha_{v_2, v_3}
            \end{bmatrix} & \alpha_{v_2, v_3} &= \frac{\fsep_{l,v_3}}{\fsep_{l,v_4}}
    \end{align*}
    Using similar arguments as above, we conclude that the matrix is generically invertible, so that the link function $\fsep_{v_2, v_3}$ is rationally identifiable. This shows that all links between observed processes are rationally identifiable. 
\end{example}

The idea behind the above procedure, i.e. exploiting linear relations between the entries in the projected internal spectrum, can be generalised by using both graphical and algebraic arguments, leading us to the graphical latent-factor half-trek criterion mentioned earlier. In order to formulate it and generalise it to the time series setting, we need to recall the necessary graphical notions. A \emph{latent factor half-trek} \cite{10.1214/22-AOS2221} on $G$ is a trek between two observed processes $v,w \in O$ that has one of the following shapes 
\begin{align*}
    v \leftarrow l \to x_1 \to \cdots \to x_n \to w & & v \to x_1 \to  \cdots \to  x_n \to w
\end{align*}
with $x_1, \dots, x_n \in O$ and $l \in L$. 

\begin{definition}[Latent half-trek reachability \cite{10.1214/22-AOS2221}]
    Let $v,w\in O$ be two observed processes and $L' \subset L$ a set of latent processes. If there exists a latent factor half-trek from $v$ to $w$ that does not pass through any node in $L'$, then $w$ is called \emph{half-trek reachable} from $v$ while avoiding $L'$. The set of processes that are half-trek reachable from $v$ while avoiding $L'$ is denoted $\htr_{L'}(v)$. If $X\subset O$ is a set of observed processes, then $\htr_{L'}(X) = \bigcup_{x \in X}\htr_{L'}(x)$ is the set of processes that are half-trek reachable while avoiding $L'$ from at least one process $x \in X$. 
\end{definition}

\begin{definition}[LF-HTC \cite{10.1214/22-AOS2221}]\label{def: LF-HTC}
    Let $v\in O$ be an observed process, and $Y,W \subset O \setminus \{v\}$ two sets of observed processes and $ L' \subset L$ a set of latent processes. The triple $(Y,W, L')$ is said to satisfy the \emph{latent factor half trek criterion (LF-HTC)} with respect to $v$ if it satisfies the following three conditions  
    \begin{enumerate}
        \item $|Y| = |\pa_O(v)| + |L'|$ and $|W| = |L'|$  with $W \cap \pa_O(v) = \emptyset$. 
        \item $Y \cap W =  \emptyset$ and $\pa_L(Y) \cap \pa_L(W \cup \{v \}) \subset L'$. 
        \item There exists a system of latent-factor half-treks from $Y$ to $\pa_O(v) \cup W$ with no sided intersection, such that any half-trek pointing to $w \in W$ has the form $y \leftarrow l \to w$ for some $y \in Y$ and $l \in L'$.  
    \end{enumerate}
\end{definition}
Let $T \in \treks(X,Y)$ be a system of treks from $X$ to $Y$. Then let $\G(T) = (V \times \Z, \D(T))$ be the time series subgraph of $\G$, where $v\to_k w$ is a causal relation on $\G(T)$ if and only if $v \to_k w$ on $\G$ and $v \to w$ is a link that appears in the system of treks $T$. Accordingly, $G(T)$ denotes the process graph of the time series graph $\G(T)$.
Since $\D(T)$ is a subset of $\D$, there is a canonical embedding $\iota_{T}:\R^{\D(T)}_\stable \to \R^\D_\stable$.
If $T\in \treks(X,Y)$ is such that $G(T)$ acyclic, then for $\Phi \in \R^{\D(T)}_\stable$ one can apply equation (\ref{eq: spectral sum of trek functions}) to get that 
\begin{align*}
        \det([\sd(\iota_T(\Phi), \Omega,z)]_{X,Y}) &= \sum_{T'\in \treks_{G(T)}(X,Y)} \sgn(T)\sd^{(T)}(\iota_T(\Phi), \Omega),
\end{align*}
where $\treks_{G(T)}(X,Y)$ is the finite non-empty set of non-intersecting trek systems from $X$ to $Y$ on the subgraph $G(T)$.
So it follows from Proposition \ref{prop: invertible subspectrum} that $\det([\sd]_{X,Y})\in \R(\Phi, \Omega, z)$ is non-zero. 

The following generalises the main Theorem in \cite{10.1214/22-AOS2221} to SVAR processes represented in the frequency domain. 
\begin{theorem}[spectral LF-HTC identifiability]\label{thm: LF-HTC}
    Suppose the triple $(Y, W, L')$ of subsets satisfies the LF-HTC with respect to an observed process $v \in O$. If all directed link functions associated with the links $u \to y \in D_O$ such that $y \in W \cup (Y \cap\htr_{L'}(W \cup \{v\} ))$ are rationally identifiable, then all direct link functions $\fsep_{x, v}$, with $x\in \pa_O(v)$, are rationally identifiable. 
\end{theorem}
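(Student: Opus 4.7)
The plan is to adapt the proof of the LF-HTC in \cite{10.1214/22-AOS2221} to the frequency-domain SVAR setting by assembling a system of $|Y|$ linear equations over the field $\R(z)$. The unknowns will be the $|\pa_O(v)|$ link functions $\fsep_{x, v}$ together with $|L'|$ auxiliary rational functions $\alpha_w$, one per $w \in W$, where each $\alpha_w$ plays the role of a ratio $\fsep_{l, v}/\fsep_{l, w}$ for the latent $l \in L'$ that $w$ stands in for. Condition~1 of Definition \ref{def: LF-HTC} makes the system square, Condition~3 will yield generic invertibility of the coefficient matrix via the spectral Gessel-Viennot Lemma, and Condition~2 together with the hypothesised identifiability of link functions into $W \cup (Y \cap \htr_{L'}(W \cup \{v\}))$ will ensure the right-hand side is rationally identifiable. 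Cramer's rule then expresses each $\fsep_{x, v}$ as a rational function of entries in the spectrum, which is exactly rational identifiability.

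For each $y \in Y$, I would derive the key equation by applying the spectral trek rule (Proposition \ref{prop: trek rule}) to $\sd_{v, y}$. Assumption \ref{assumption: latent structure} forces every trek from $v$ to $y$ whose top is a latent $l \in \pa_L(v)$ to have right side equal to the single edge $l \to v$, so the trek decomposition gives
\begin{align*}
    \sd_{v, y} \;=\; \sum_{x \in \pa_O(v)} \fsep_{x, v}\, \sd_{x, y} \;+\; \sum_{l \in \pa_L(v)} \fsep_{l, v}\, \sdint_l\, [(I - \Fsep)^{-1}]_{l, y}^\ast.
\end{align*}
The second sum encodes all confounding; the task is to re-express it as a linear combination of $|L'|$ rationally identifiable quantities. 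For each $w \in W$, the analogous decomposition of $\sd_{w, y}$ plus the assumed identifiability of the link functions into $w$ rationally recovers the ``latent-only'' contribution $\sdlint_{w, y}$ from the spectrum. Using Condition~2 and the assumed identifiability of the link functions into $Y \cap \htr_{L'}(W \cup \{v\})$, I would peel off any directed-path pieces (inside $O$) appearing in the latent contributions until what remains is, as desired, a $|L'|$-dimensional linear combination of the $\sdlint_{w, y}$. After bookkeeping, each equation takes the form
\begin{align*}
    \mathbf{b}_y \;=\; \sum_{x \in \pa_O(v)} \sd_{x, y}\, \fsep_{x, v} \;+\; \sum_{w \in W} \sdlint_{w, y}\, \alpha_w,
\end{align*}
with $\mathbf{b}_y \in \R(z)$ a rational expression in the spectrum and the inductively identifiable link functions.

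It remains to show that the coefficient matrix $M$, with entries $\sd_{x, y}$ in the $\pa_O(v)$ columns and $\sdlint_{w, y}$ in the $W$ columns, is generically invertible over $\R(z)$. Along the lines of Proposition \ref{prop: invertible subspectrum}, I would expand $\det(M)$ via Cauchy-Binet and the spectral Gessel-Viennot Lemma (Proposition \ref{prop: spectral gessel viennot lemma}); each non-vanishing summand corresponds to a system of non-intersecting treks from $Y$ to $\pa_O(v) \cup W$ with the additional constraint that the portion ending in $W$ has the shape $y \leftarrow l \to w$. The LF-HTC system of half-treks guaranteed by Condition~3 produces one such summand, and the constraint $\pa_L(Y) \cap \pa_L(W \cup \{v\}) \subset L'$ of Condition~2 prevents any other trek system from producing the same monomial in the latent-edge parameters; hence $\det(M)$ is non-zero in $\R(\Phi, \Omega, z)$. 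Cramer's rule then solves for the $\fsep_{x, v}$ rationally, with the exceptional parameter locus contained in the proper algebraic subvariety $\{\det(M) = 0\}$, which has Lebesgue measure zero. The main obstacle is the careful bookkeeping in the first step: one must verify that the peeling-off procedure terminates along the latent-factor half-treks emanating from $W \cup \{v\}$ and that it uses only the hypothesised identifiable link functions. The reachability set $Y \cap \htr_{L'}(W \cup \{v\})$ is precisely tuned to this purpose, but an inductive argument along a topological order on the acyclic process graph $G$ is needed to make this rigorous; a secondary subtlety is that the involution $()^\ast$ must be carried through every algebraic manipulation, using Proposition \ref{prop: conjugation of rational functions}, to ensure that the final identification map remains a well-defined rational function.
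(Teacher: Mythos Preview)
Your overall strategy—set up a square linear system over $\R(z)$ with unknowns $\fsep_{x,v}$ and $|L'|$ auxiliary functions, then show generic invertibility of the coefficient matrix—matches the paper's. But two steps in your execution do not go through as written.

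\textbf{The linear system.} You claim that ``the analogous decomposition of $\sd_{w,y}$ plus the assumed identifiability of the link functions into $w$ rationally recovers $\sdlint_{w,y}$.'' This is false: $\sdlint_{w,y}=[(I-\Fsep)^\top\sd(I-\Fsep^\ast)]_{w,y}$ requires the link functions into \emph{both} $w$ and $y$, and the latter are only assumed identifiable when $y\in\htr_{L'}(W\cup\{v\})$. Moreover, even if $\sdlint_{w,y}$ were available, your displayed equation $\mathbf{b}_y=\sum_x\sd_{x,y}\fsep_{x,v}+\sum_w\sdlint_{w,y}\alpha_w$ does not hold: the latent part of $\sd_{v,y}$ is $\sum_{l\in\pa_L(v)}\fsep_{l,v}\sdint_l[(I-\Fsep)^{-1}]_{l,y}^\ast$, which involves arbitrary paths from $l$ to $y$, whereas $\sdlint_{w,y}$ only involves the single edge $l\to y$. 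No $y$-independent choice of $\alpha_w$ can bridge this. The paper resolves this by a case distinction: for $y\in\htr_{L'}(W\cup\{v\})$ it uses the cleaned entries $[(I-\Fsep)^\top\sd]_{y,u}$ and $\sdlint_{y,w}$ (both identifiable since links into $y$ are available); for $y\notin\htr_{L'}(W\cup\{v\})$ it uses $\sd_{y,u}$ and $[\sd(I-\Fsep^\ast)]_{y,w}$, which require only links into $w$. This row-dependent construction is the point where the set $Y\cap\htr_{L'}(W\cup\{v\})$ actually enters, and your uniform ``peeling'' description does not capture it. (You also drop the term $\sdint_v[(I-\Fsep)^{-1}]_{v,y}^\ast$ from the trek decomposition of $\sd_{v,y}$; it need not vanish.)

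\textbf{Invertibility.} The argument that ``Condition~2 prevents any other trek system from producing the same monomial'' does not transfer directly: trek functions here are products of rational functions in $z$, not monomials in edge weights, and cancellations between distinct trek systems are not ruled out by inspecting edge-parameter monomials alone. The paper instead uses Lemma~\ref{lemma: minimal lf half-trek} to produce a minimal latent-factor half-trek system $T$ with $G(T)$ acyclic and $\treks_F\cap\treks_{G(T)}(Y,\pa_O(v)\cup W)=\{T\}$; restricting the determinant to the sub-parameter-space $\R^{\G(T)}_\stable$ then collapses it to the single nonzero term $\sgn(T)\sd^{(T)}$, which shows nonvanishing in $\R(\Phi,\Omega,z)$.
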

\begin{example}
    Theorem \ref{thm: LF-HTC} allows us to prove, by purely graphical arguments, that every link in the process graph shown in Figure \ref{fig: lfhtc_example} is rationally identifiable, as we have already seen in Example \ref{ex: rational identifiability}. First, for $v_1$ and $v_2$ the triple $(\emptyset, \emptyset, \emptyset)$ satisfies the LF-HTC because $\pa_O(v_1)= \pa_O(v_2)=\emptyset$. Then we continue with the process $v_4$, for which we set $Y_{v_4} =\{v_2, v_3\}$ and $W_{v_4} = \{v_1\}$ and $L'_{v_4}= \{l\}$. These sets clearly satisfy conditions 1 and 2 of the definition \ref{def: LF-HTC}. To see that the third condition is also satisfied, note that $\{v_2 \leftarrow l \to v_1, v_3, \}$ is a system of treks from $\{v_2, v_3\}$ to $\{v_1, v_3\}$ with no sided intersection. Finally, since $Y_{v_4} \cap \htr_{L'_{v_4}}(\{v_1, v_4)\} = \emptyset$, it follows that the link $v_3 \to v_4$ is rationally identifiable. For the process $v_5$ one can choose exactly the same triple of sets to infer that $v_4 \to v_5$ is rationally identifiable. Finally, for the process $v_3$ we choose $Y_{v_3} = \{v_2, v_1\} $ and $W_{v_3} = \{ v_4 \}$ and $L'_{v_3} = \{l\}$. Again, conditions 1 and 2 of Definition \ref{def: LF-HTC} are easy to verify. The system of latent-factor half-treks required by condition 3 is $\{v_1 \leftarrow l \to v_4, v_2 \to v_3\}$. Since the link $v_3 \to v_4$ is rationally identifiable, it follows by Theorem \ref{thm: LF-HTC} that $v_2 \to v_3$ is rationally identifiable. 
\end{example}
\begin{proof}[Proof of Theorem \ref{thm: LF-HTC}]
    Let $\G \in \T(G)$ be some time series graph and let $(\Phi, \Omega)\in \R^\G_\stable$. We seek to construct the map $\Psi_{\pa_O(v), v}: \pd_{O}(\R(z)) \to \R(z)^{\pa_O(v)}$ which maps the spectrum $\sd_\G(\Phi, \Omega)$ to the vector of link functions $\Fsep(\Phi, \Omega)_{\pa_O(v), v} \coloneqq [\fsep_{u,v}]_{u \in \pa_O(v)} \in \R(z)^{\pa_O(v) \times \{v\}}$. In the following we will write $\sd$ to refer to $\sd_\G(\Phi, \Omega)$ and $\Fsep$ to refer to $\Fsep(\Phi)$. 
    
    The  construction of this map will go along the same lines as in the proof of the LF-HTC identifiability result in \cite{10.1214/22-AOS2221}. The first step is to establish over the field $\R(z)$ the linear relationship
    \begin{align}\label{eq: LF-HTC equation}
        \begin{bmatrix}
            \mathcal{A} & \mathcal{B}
        \end{bmatrix} \cdot \begin{bmatrix}
            \Fsep_{\pa_O(v), v} \\
            f 
         \end{bmatrix} &= g,
    \end{align}
    where $\mathcal{A} \in \R( z)^{Y \times \pa_O(v)}$, and $\mathcal{B} \in \R(z)^{Y\times W}$, and $f \in \R(z)^W$, and $g \in \R(z)^Y$. We begin with the definition of the matrices $\mathcal{A}$ and $\mathcal{B}$. For $y \in Y$ and $u \in \pa_O(v)$, we define the entry  
    \begin{align*}
        \mathcal{A}_{y, u} &\coloneqq \begin{cases}
            [(I  - \Fsep)^\top \sd]_{y, u} & \text{if } y \in \htr_{L'}(W \cup \{v\}) \\
            \sd_{y,u} & \text{if } y \not\in \htr_{L'}(W \cup \{v\})
        \end{cases}, 
    \end{align*}
     and for $y \in Y$ and $w \in W$ the corresponding entry in the matrix $\mathcal{B}$ is
    \begin{align*}
        \mathcal{B}_{y, w} &\coloneqq \begin{cases}
            [(I - \Fsep)^\top \sd (I - \Fsep^\ast)]_{y, w} & \text{if } y \in \htr_{L'}(W \cup \{v\}) \\
            [\sd (I - \Fsep^\ast)]_{y, w} & \text{if } y \not\in \htr_{L'}(W \cup \{v\})
        \end{cases}
    \end{align*}
    Finally, the component of the vector of rational functions indexed by $y \in Y$ is given as 
    \begin{align*}
        g_y = \begin{cases}
            [(I- \Fsep)^\top \sd]_{y,v} & \text{if } y \in \htr_{L'}(W \cup \{v\}) \\
            \sd_{y,v} & \text{if } y \not\in \htr_{L'}(W \cup \{v\})
        \end{cases}
    \end{align*}
    The matrices $\mathcal{A}, \mathcal{B}$ and the vector $g$ are rationally identifiable by assumption, i.e. they can be constructed for almost all $(\Phi, \Omega)\in \R^\G_\stable$. To construct $f$, note that due to the assumption that $(Y,W,L')$ satisfies the LF-HTC with respect to $v$, it holds that 
    \begin{align*}
    [\sdlint]_{Y, W \cup \{v\}} = [\Fsep]_{L',O}^\top[\sdint]_{L',L'}[\Fsep]_{L',O}^\ast 
    \end{align*}
     The matrix $[\sdlint]_{Y, W \cup\{v\}}$ has at least rank $r$, for generic parameter configurations, but cannot have full column rank (as a matrix over $\R( z)$), since $|L'| = |W| = r$ and condition 2 of the LF-HTC combined with Proposition \ref{prop: invertible subspectrum} applied to the sub-graph $G''= (O \cup L, D_L)$. So, for generic $(\Phi, \Omega)\in \R^\G_\stable$, there is a $f \in \R( z)^W$ such that 
     \begin{align*}
         [\sdlint]_{Y, W}f = [\sdlint]_{Y,v}.
     \end{align*} 
     Equation (\ref{eq: LF-HTC equation}) follows by the same calculation as in \cite{10.1214/22-AOS2221}. Since we have performed the construction over the field of rational functions $\R(z)$, all these relations hold for generic parameter choices, independent of the underlying time series graph. 
     
     It remains to show that for generic parameter choices $(\Phi, \Omega)$, the matrix $\begin{bmatrix}
        \mathcal{A}& \mathcal{B}
    \end{bmatrix}$ is invertible. Let us therefore write $F\coloneqq \det(\begin{bmatrix}
        \mathcal{A}& \mathcal{B}
    \end{bmatrix} )$ which we view as an element in $\R(\Phi, \Omega, z)$. We need to show that $F$ is non-zero. Let $\treks_F$ be the set of trek systems $T'$ such that the following conditions are satisfied:
    \begin{enumerate}
        \item If $\tau \in T'$ is a trek from $y \in Y \cap \htr_{L'}(W \cup \{v\}) $ to $u \in \pa_O(v)$, then $\Left(\tau)$ does not contain any link $x \to y \in D_O$.
        \item If $\tau \in T'$ is a trek from $y \in Y$ to $w \in W$, then $\Right(\tau)$ does not contain any link $v\to w\in D_O$. If in addition $y \in Y \cap \htr_{L'}(W \cup \{v\})$, then it is also required that $\Left(\tau)$ does not contain any link $x \to y\in D_O$.
    \end{enumerate} 
    If $T\in \treks(X,Y)$ such that $G(T)$ is acyclic, then for $(\Phi, \Omega) \in \R^{\G(T)}_\stable$ it holds that
    \begin{align}\label{eq: restriction of F}
        F(\iota_T(\Phi), \Omega,z) &= \sum_{T \in \treks_F \cap \treks_{G(T)}(X,Y)} \sgn(T)\sd^{(T)}(\iota_T(\Phi), \Omega,z),
    \end{align}
    To see that $F$ is not the zero element in $\R(\Phi, \Omega)$ it suffices to identify some $(A, \Omega, z)$ such that $F$ evaluated at that triple is non-zero.
    
    In order to find such a triple, we consider a system of latent factor half-treks $T \in \treks(Y, W \cup \pa_O(v))$ of the form specified in Definition \ref{def: LF-HTC} and that additionally satisfies the properties spelled out in Lemma \ref{lemma: minimal lf half-trek}. Note that the system of latent factor half-treks $T$ is an element in $\treks_F$, which follows from Definition \ref{def: LF-HTC}. Using Lemma \ref{lemma: minimal lf half-trek} we conclude that $\treks_F \cap \treks_{G(T)}(X,Y) = \{T\}$. For $(A, \Omega) \in \R^{\G(T)}_\stable$ equation (\ref{eq: restriction of F}) reduces to
    \begin{align*}
        F(\iota_T(A), \Omega, z) = \sgn(T)\sd^{(T)}(\iota_{T}(A), \Omega, z) = \det([\sd_{\G(T)}(A, \Omega, z)])_{Y, \pa_O(v)\cup W}
    \end{align*}
    and this suffices to conclude that $F$ is non-zero as an element in $\R(\Phi, \Omega, z)$, and therefore that the matrix $\begin{bmatrix}
        \mathcal{A}& \mathcal{B}
    \end{bmatrix}$ is generically invertible.    
\end{proof}
\section{Discussion}
A structural vector autoregressive (SVAR) process is a linear causal model for processes that evolve over a discrete set of time points. Graphically, the time series graph provides information about the detailed time lags with which the processes drive each other. The time series graph is infinite and, depending on the specific lag structure, highly complex. The complexity of the finite process graph, on the other hand, is independent of the lag structure. So, analysing the causal structure in SVAR processes at the level of the process graph is combinatorially a potentially much simpler problem than reasoning about it at the level of the time series graph. The recently established frequency domain representation serves as a causal model for the process graph. The observational information of an SVAR process is represented by the spectral density, i.e. the frequency domain version of the autocovariance. In causal inference, one is interested in recovering the causal model from the observations.
In the context of SVAR processes, this problem reduces to the question of which aspects of the process graph and which link functions can be uniquely identified from the spectral density. In this paper we have given a rigorous algebraic formulation of this question. Using this formulation, we were able to show that $d$- and $t$-separation statements about the process graph are characterised by algebraic conditions on the spectral density. Furthermore, we showed that the latent factor half trek criterion can be applied to the process graph to determine whether certain link functions are generically identifiable in terms of rational operations on the spectral density. 

In relation to future work, it would be interesting to use the spectral characterisation of $d$- and $t$-separation to estimate the process graph from observational data. The aim would be to identify the Markov equivalence class of the process graph directly, rather than identifying the time series graph first. This could be advantageous from a computational point of view, as the number of conditional independence statements to be checked on the process graph is potentially much smaller than on the underlying time series graph. To derive the Markov equivalence class of the process graph, one could either estimate the spectrum as an element in the space of matrices over the rational functions, or one could estimate the evaluations of the spectrum at a finite number of frequencies \cite{liu2010asymptotics, brockwell2009time, rosenblatt1984asymptotic} (using, for example, the discrete-time Fourier transform). Finally, we would need to test whether a rational function is equal to zero or not. If we had an estimator for the entire rational function, it would suffice to test whether all the coefficients defining the numerator of the rational function are zero. On the other hand, we may only have estimates for a certain number of evaluations of a rational function. If the number of points is greater than or equal to the degree of the numerator of the rational function, then we need to test whether the evaluations are equal to zero at the selected frequencies. However, the estimators for these quantities could behave badly from a statistical point of view, so it would be interesting to compare the reduced computational complexity with the potential sacrifice in statistical performance.

It may also be worthwhile to consider the scenario where the noise terms of the SVAR model are non-Gaussian. A random variable that does not follow a Gaussian distribution has non-zero moments not only in degree one (expectation) and two (covariance), but also in the higher order terms. SEM's with non-Gaussian noise terms (i.e. the non-time series case) have been found to exhibit algebraic relationships not only in the covariance, as explained by the trek rule, but also in the higher order terms \cite{robeva2021multi, amendola2023thirdorder}. It is possible to exploit these relationships to extract even more information about the underlying causal model from the observed distribution \cite{tramontano22alearning}. Future work could investigate whether SVAR models with non-Gaussian noise terms also exhibit relationships in the higher order spectra \cite{petropulu1994hos, Nikias1993HigherorderSA, Nikias1993SignalPW} that can be used to recover the frequency domain causal model.
\section*{Acknowledgments}
J.W. and J.R. received funding from the European Research Council (ERC) Starting Grant CausalEarth under the European Union’s Horizon 2020 research and innovation program (Grant Agreement No. 948112).
N.R., A.G. and J.R.  received funding from the European Union’s Horizon 2020 research and innovation programme under Marie Skłodowska-Curie grant agreement No 860100 (IMIRACLI).

N.R. thanks Nils Sturma for encouraging and helpful discussions. 
\appendix
\section{An involution on the rational functions with real coefficients}
\begin{lemma}\label{lemma: multiplicativity conjugate polynomials}
    The map $()^\ast: \R[z] \to \R[z]$ is compatible with  multiplication, i.e. for any two $f, g \in \R[z]$ it holds that 
    \begin{align*}
        (fg)^\ast  &= f^\ast g^\ast  
    \end{align*}
\end{lemma}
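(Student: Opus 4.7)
The plan is to produce a single closed-form expression for the operation $()^\ast$ on polynomials that makes multiplicativity transparent. Concretely, for a nonzero polynomial $f = \sum_{k=0}^n \alpha_k z^k$ of degree $n$, I would first verify the identity
\begin{align*}
    f^\ast(z) &= z^n\, f(1/z),
\end{align*}
which follows immediately from the definition by reindexing: $z^n f(1/z) = z^n \sum_{k=0}^n \alpha_k z^{-k} = \sum_{k=0}^n \alpha_k z^{n-k} = \sum_{j=0}^n \alpha_{n-j} z^j = f^\ast(z)$.

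With this reformulation in hand, the multiplicativity reduces to a one-line computation. Writing $n = \deg(f)$ and $m = \deg(g)$, and using the fact that $\deg(fg) = n + m$ (which holds in $\R[z]$ because $\R$ is an integral domain, so the leading coefficients cannot cancel), I would compute
\begin{align*}
    (fg)^\ast(z) &= z^{n+m}\,(fg)(1/z) \;=\; \bigl(z^n f(1/z)\bigr)\bigl(z^m g(1/z)\bigr) \;=\; f^\ast(z)\, g^\ast(z).
\end{align*}

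The only subtle point is the degree bookkeeping: if the definition of $()^\ast$ is only stated for nonzero polynomials (as is implicit in the statement via $\deg f$), then the proof is complete as above; otherwise one handles the zero polynomial separately by declaring $0^\ast = 0$ and observing that the identity is trivial in that case. I expect this degree-tracking to be the only place one has to be slightly careful; everything else is a direct substitution. No appeal to Lemma~\ref{lemma: multiplicativity conjugate polynomials} or Proposition~\ref{prop: conjugation of rational functions} is needed, since this lemma is precisely the building block used in constructing those later results.
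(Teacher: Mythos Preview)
Your proof is correct. The key identity $f^\ast(z) = z^{\deg f}\, f(1/z)$ is valid (interpreted in $\R[z,z^{-1}]$, where the factor $z^{\deg f}$ clears all negative powers so that the result lies in $\R[z]$), and once it is established the multiplicativity follows from $\deg(fg)=\deg f + \deg g$ exactly as you wrote.

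The paper proceeds differently: it expands $fg = \sum_{d=0}^{n+m}\gamma_d z^d$ with $\gamma_d = \sum_{l=0}^d \alpha_{d-l}\beta_l$, writes out $(fg)^\ast$ and $f^\ast g^\ast$ coefficient by coefficient, and then verifies $\gamma_{n+m-d} = \delta_d$ via an explicit index shift. Your route avoids this bookkeeping entirely by packaging the coefficient reversal into a single algebraic identity; the trade-off is that one must be comfortable passing through Laurent polynomials (or rational functions) as an intermediate step, whereas the paper's argument stays entirely inside $\R[z]$. Both arguments rely on the same essential fact---that degrees add over an integral domain---but yours makes the structural reason for multiplicativity much more transparent.
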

\begin{proof}
    Let $f = \sum_{k = 0}^n \alpha_k z^k$ and $g= \sum_{j=0} \beta_j z^j$ be polynomials such that $\alpha_n \neq 0$ and $\beta_m \neq 0 $. Accordingly, the product of $f$ and $g$ is a polynomial of degree $n+m$ and has the following form 
    \begin{align*}
        fg = &\sum_{d= 0}^{n+m} \gamma_d z^d & \gamma_d &= \sum_{l = 0}^d \alpha_{d-l} \beta_l. 
    \end{align*}
    The conjugate of the product, i.e. $(fg)^\ast$ is therefore $\sum_{d = 0}^{n+m} \gamma_{n+m-d} z^d$. On the other hand, we have the polynomial representation of the product of conjugates 
    \begin{align*}
        f^\ast g^\ast &= \sum_{d = 0}^{n+m} \delta_d z^d, & \delta_d &= \sum_{l= 0}^d \alpha_{n-d+l}\beta_{m-l} 
    \end{align*}
    Hence, we need to show that $\gamma_{n+m-d} = \delta_d$ for all $d=0, \dots, n+m$. To see that, note that 
    \begin{align*}
        \gamma_{n+m-d} &= \sum_{l = m-d}^m \alpha_{n + m - d - l} \beta_{l} = \sum_{l' = 0}^d \alpha_{n -d + l'} \beta_{m - l'} = \delta_d , 
    \end{align*}
    where in the first equation we used that $\alpha_k = 0$ for $k > n$ and $\beta_j = 0 > m$, and for the second equation we introduced the index $l' = m - l$. This finshes the proof. 
\end{proof}

\begin{proof}[Proof of Proposition \ref{prop: conjugation of rational functions}]
    First, we need to show that the map $()^\ast$ is well defined, i.e. let $f,g, h \in \R[z]$ be polynomials.
    It follows from Proposition \ref{lemma: multiplicativity conjugate polynomials} that 
    \begin{align*}
        \left(\frac{fh}{gh}\right)^\ast = \frac{(fh)^\ast}{(gh)^\ast} z^{\deg(g) + \deg(h)-\deg(f) - \deg(h)} = \frac{f^\ast h^\ast}{g^\ast h^\ast} z^{\deg(g)- \deg(f)} = \left(\frac{f}{g}\right)^\ast
    \end{align*}
    This shows that the conjugate of a rational function does not depend on the choice of representative.
    
    Next, we need to show that conjugation commutes with multiplication in $\R(z)$. Let us therefore fix four polynomials $f_{i}, g_{i} \in \R[z]$ where $i=1,2$. It then holds by Proposition \ref{lemma: multiplicativity conjugate polynomials} that 
    \begin{align*}
        \left( \frac{f_1f_2}{g_1g_2} \right) ^\ast &= \frac{(f_1f_2)^\ast}{(g_1g_2)^\ast} z^{\deg(g_1) + \deg(g_2) - \deg(f_1)- \deg(f_2)} = \left( \frac{f_1}{g_1} \right)^\ast \left( \frac{f_2}{g_2} \right)^\ast 
    \end{align*}

    To show that conjugation of rational functions is linear, i.e.
    \begin{align*}
        \left(\frac{f_1}{g_1} + \frac{f_2}{g_2}\right)^\ast &=  \left( \frac{f_1}{g_1} \right)^\ast + \left( \frac{f_2}{g_2} \right)^\ast
    \end{align*}
    we begin with the case where $g_1=g_2=1 \in \R[z]$, i.e. the constant monomial. Furthermore, we set $d_1 = \deg (f_1)$ and $d_2= \deg(f_2)$ and w.l.o.g. we assume that $d_1 \geq d_2$. Let
    \begin{align*}
        f_1 &= \sum_{k=0}^{d_1} \alpha_k z^k & f_2 &= \sum_{j=0}^{d_2} \beta_j z^j,
    \end{align*} and we assume that $d_1 > d_2$ so that $\deg(f_1 + f_2) = d_1 $. In this case it holds that 
    \begin{align*}
        \left( \frac{f_1 + f_2}{1}\right)^\ast &= \frac{\sum_{k=0}^{d_1} \alpha_{d_1 - k}z^k + \sum_{j = 0}^{d_1} \beta_{d_1 - j} z^{j}}{z^{d_1}} \\
        &= \left( \frac{f_1}{1} \right)^\ast  + \frac{ z^{d_1 -d_2}\sum_{j' = 0}^{d_2} \beta_{d_2 - j'} z^{j'} }{z^{d_1}} \\
        &= \left( \frac{f_1}{1} \right)^\ast + \left( \frac{f_2}{1} \right)^\ast, 
    \end{align*}
    where we set $\beta_j =0 $ for $j > d_2$, and in the third equation we used the index translation $j = d_1 - d_2 + j' $. In the second case ($d=d_1=d_2$), one has to consider the situation in which the degree of the sum decreases, i.e. $\deg(f_1 + f_2) < d_1$. This happens when some of the coefficients of $f_1$ and $f_2$ offset each other. So, let $\gamma_l = \alpha_l + \beta_l$ and suppose that $\gamma_l =0$ for all $d' < l \leq d$, i.e. $\deg(f_1+f_2) =d' < d$. Then we compute as follows 
    \begin{align*}
        \left( \frac{f_1}{1}\right)^\ast + \left( \frac{f_2}{1} \right)^\ast  &
        = \frac{\sum_{k=0}^d \alpha_{d-k}z^k}{1} + \frac{\sum_{j=0}^d \beta_{d-j}z^j}{1} \\ &
        = \frac{\sum_{k=0}^d \gamma_{d-l}z^l}{z^d} \\ &
        = \frac{z^{d-d'} \sum_{m= 0}^{d'} \gamma_{d' - m }z^m}{z^d} \\ &
        = \left( \frac{f_1 + f_2}{1} \right)^\ast ,
    \end{align*}
    where we used the index translation $l = d-d' +m$. 

    We are eventually prepared to conclude with the linearity of the conjugation of rational functions. Precisely,
    \begin{align*}
        \left( \frac{f_1}{g_1} + \frac{f_2}{g_2} \right)^\ast &=  \left( \frac{f_1 g_2 + f_2g_1}{g_1g_2} \right)^\ast \\
        &= \left( \frac{f_1 g_2 + f_2g_1}{1} \frac{1}{g_1g_2} \right)^\ast \\
        &= \left(\left(\frac{f_1g_2}{1}\right)^\ast + \left(\frac{f_2g_1}{1}\right)^\ast \right)  \left(\frac{1}{g_1g_2} \right)^\ast \\
        &=
        \left( \frac{f_1}{g_1}\right)^\ast + \left( \frac{f_2}{g_2} \right)^\ast.
    \end{align*}
    For the third equation we used that conjugation and multiplication of rational functions commute and that conjugation respects addition on the subring $\R[z] \subset \R(z)$. For the last equation we used again that conjugation and multiplication of rational functions commute. 

    Lastly, we check that conjugation is the inverse to itself. Let $f = \sum_{k=0}^n\alpha_k z^k$ and $g = \sum_{j=0}^m \beta_j z^j$ be two polynomials in $\R[z]$. Then we compute 
    \begin{align*}
        \left( \left( \frac{f}{g} \right)^\ast \right)^\ast &= \left( \frac{z^m \sum_{k=0}^n \alpha_{n-k}z^k}{z^n \sum_{j=0}^m \beta_{m-j}z^j} \right)^\ast \\
        &= \left( \frac{\sum_{k=0}^{n+m} \hat{\alpha}_{n+m -k} z^k}{\sum_{j=0}^{n+m} \hat{\beta}_{n+m -j} z^j} \right)^\ast \\
        &= \frac{\sum_{k=0}^{n+m} \hat{\alpha}_{k} z^k}{\sum_{j=0}^{n+m} \hat{\beta}_{j} z^j} \\
        &= \frac{f}{g}
    \end{align*}
    In the lines above we used the augmented coefficient vectors $\hat{\alpha}, \hat{\beta}\in \R^{n+m}$, which are defined as follows
    \begin{align*}
        \hat{\alpha}_k &= \begin{cases}
            \alpha_k & \text{ if } 0 \leq k \leq n \\
            0 & \text{ if } n < k \leq n+m 
        \end{cases} & 
        \hat{\beta}_j &= \begin{cases}
            \beta_j & \text{ if } 0\leq j \leq m \\
            0 & \text{ if } m < j \leq n+m
        \end{cases}
    \end{align*}
    The last claim follows as $z^\ast = z^{-1}$ for $z \in S^1$ and direct inspection.
\end{proof} 

\section{Proof of the spectral Gessel-Viennot Lemma}
\begin{proof}[Proof of Proposition \ref{prop: spectral gessel viennot lemma}]
    We show this by induction on the cardinality of $X$ resp. $Y$. Let $l=1$. Then the statement follows from the spectral path rule \cite{reiter2023formalising}. 
    Assume the statement holds for $|X| = l-1$. We now wish to show that this implies that the statement also holds for cardinality equal to $l$, 
    i.e., $X = \{x_1, \dots, x_l \}$ and $B = \{y_1, \dots, y_l \}$. In order to avoid overly complicated notation we write $\Fsep^\infty$ to abbreviate $(I - \Fsep)^{-1}$. We employ the Leibnitz expansion of the determinant 
    \begin{align*}
        \det([\Fsep^\infty]_{X,Y}) &= \sum_{j = 1}^l \fsep^{\infty}_{x_1, y_j} (-1)^{1+j} \mathfrak{m}_{1,j},
    \end{align*}
    where $\mathfrak{m}_{i,j} = \det ([\Fsep^{\infty}]_{X_i,Y_j})$ and $X_i = X \setminus \{x_i\}$ resp $Y_j = Y \setminus \{y_j \}$.  
    Applying the induction hypothesis we get that 
    \begin{align*}
        \mathfrak{m}_{1, j} &= \sum_{\Pi \in \mathbf{N}_0(X_1, Y_j)} \sgn(\Pi) \prod_{\pi \in \Pi} \fsep^{(\pi)}
    \end{align*}
    On the other hand we know from the spectral path rule that 
    \begin{align*}
        \mathfrak{h}^\infty_{x_1,y_j} &= \sum_{\pi \in \mathbf{P}_0(x_1, y_j)} \fsep^{(\pi)}.
    \end{align*}
    Let us fix a path $\pi \in \mathbf{P}_0(x_1, y_j)$ and a system of paths $\Pi \in \mathbf{P}_0(X_1, Y_j)$. Hence, $\Pi \cup \{ \pi \}$ constitutes a system of directed paths from $X$ to $Y$. 

    The system $\Pi \cup \pi$ induces a permutation on the set of $l$ elements. We now show that $\sgn(\Pi \cup \pi) = (-1)^{1+j} \sgn(\Pi)$. This can be seen using the description of the sign of permutation as the determinant of its corresponding permutation matrix. We can therefore conclude with 
    \begin{align}
        \det([\Fsep^\infty]_{X,Y}) &= \sum_{j = 1}^l \sum_{\pi \in \mathbf{P}_0(x_1, y_j)} \sum_{\Pi \in \mathbf{N}_0(X_1, Y_j)} \sgn(\Pi \cup  \pi) \fsep^{(\pi)} \Fsep^{(\Pi)} 
    \end{align}
    We now assume that $\pi$ has an intersection with at least one of the paths in $\Pi$. We can then find a path $\pi' \in \Pi$ and nodye $v \in V$ such that $\pi$ and $\pi'$ intersect in $v$, and such that $v$ is the first node visited by $\pi$ that is also visited by some path in $\Pi$. Since $\Pi$ is non intersecting it follows that the node $v$ and the path $\pi'$ are unique. The directed path $\pi'$ connects some $x_i \in X_1$ to some $y_k\in Y_j$. Then we can write the paths $\pi$ and $\pi'$ as the concatenations
    \begin{align*}
        \pi &= \pi_{x_1, v} \oplus \pi_{v, y_j} & \pi' &= \pi_{x_i, v}' \oplus \pi_{v, y_k}',
    \end{align*}
    We use the symbol $\oplus$ to denote concatenation of paths. Using this concatenation we proceed by constructing two new paths 
    \begin{align*}
        \hat{\pi} &= \pi_{x_1,v} \oplus \pi'_{v, y_k} & \tilde{\pi} &= \pi_{x_i, v}' \oplus \pi_{v, y_j}.
    \end{align*} 
    Then $\Pi' = \Pi \setminus \{\pi' \} \cup \{\tilde{\pi} \}\in \mathbf{P}(X_1, Y_k)$ is a system of non intersecting directed paths from $X_{1}$ to $Y_{k}$. That the system is non intersecting follows from the choice of $v$ and $\pi'$. Furthermore, $\hat{\pi}$ is a directed path from $x_1$ to $y_j$. It follows that 
    \begin{align*}
        \fsep^{(\pi)} \Fsep^{(\Pi)} &= \fsep^{(\hat{\pi})} \Fsep^{(\Pi')}
    \end{align*}
    Let us denote by $\gamma$ the permutation that is associated with $\Pi \cup \pi$, and by $\hat{\gamma}$ the permutation that is associated with $\Pi' \cup \hat{\pi}$. It holds that $\sgn(\gamma) = - \sgn(\hat{\gamma})$. This is because $\hat{\gamma}$ can be obtained from $\gamma$ by post-composing with a transposition. Specifically, we define $\tau$ to be the following transposition 
    \begin{align*}
        \tau(y_j) &= y_k & \tau(y_k) &= y_j.
    \end{align*}
    Using this transposition we see that $\hat{\gamma} = \tau \circ \gamma $, which combined with the multiplicativity of the sign yields $\sgn(\hat{\gamma}) = - \sgn(\gamma)$. 

    It remains to show that $\det([I - \Fsep]_{X,Y}^{-1})$ is non-zero if and only if there there is at least one system of non-intersecting paths from $X$ to $Y$. First, if there is no such system then it follows by the identity we just showed that the determinant must be zero. So suppose $ \mathbf{P}(X,Y) = \{\Pi_{1}, \dots, \Pi_l\}$. We now wish to show that the rational function 
    \begin{align}\label{eq: rational function path systems}
        \Fsep^{(\Pi_1)} + \cdots + \Fsep^{(\Pi_l)} \in \R(\Phi, z) \setminus \{0 \},
    \end{align}
    i.e. that this is a non-zero rational function in the indeterminates $\Phi$ and $z$. In order to show this claim we introduce a few polynomials. Let $\pi$ be a directed path, then 
    \begin{align*}
        p^{(\pi)} &= \prod_{v \to w \in \pi} \varphi_{v,w} & q^{(\pi)} &= \prod_{v \to w \in \pi} 1 - \varphi_{w,w},
    \end{align*}
    of which the first is the numerator and the second is denominator of the path function $\fsep^{(\pi)}$. 
    These are polynomials in $\R[\Phi, z]$ and note that the coefficient of the degree zero monomial of $q^{(\pi)}$ is equal to one. Now suppose that $\pi \in \mathbf{P}_0(x,y)$ is a directed path that visits every vertex at most once and $\Pi \in \mathbf{P}(X,Y)$ a system of directed paths, then we introduce the following polynomials 
    \begin{align*}
          p^{(\Pi)} &= \prod_{\pi \in \Pi} p^{(\pi)} & q^{(\Pi)} &= \prod_{\pi \in \Pi} q^{(\pi)}.
    \end{align*}
    Again, we read all of these polynomials as elements in the polynomial ring $\R[\Phi, z]$. The rational function (\ref{eq: rational function path systems}) is non-zero if and only if the polynomial 
    \begin{align*}
        p^{(\Pi_1)}Q_1 + \dots +p^{(\Pi_l)}Q_l\in \R[\Phi, z], & & Q_i &= \prod_{j \neq i}^l q^{(\Pi_j)} \in \R[\Phi, z] 
    \end{align*}
    is non-zero. We show this by identifying a monomial $x$ that has non-zero coefficient in $p^{\mathrm{P}(\Pi_1)}Q_1$, but the coefficient of the monomial $x$ in the polynomial $p^{\mathrm{P}(\Pi_j )}Q_j$ is zero for all $j \geq 2$. Let us define the monomial $x$ as follows 
    \begin{align*}
        x &= \prod_{\pi \in \Pi_1} \prod_{v\to w \in \pi} \phi_{v,w}(m_{v,w})z^{m_{v,w}}, & m_{v,w} &= \min\{k \mid v \to_k w \}.
    \end{align*}
    Note that the coefficient of $x$ in $p^{(\Pi_1)}Q_1$ is equal to one, which can be seen from the fact that the coefficient of the monomial with degree zero is equal to one in every $Q_i$. It remains to show that the coefficient of $x$ in $p^{(\Pi_j)}Q_j$ is zero for all other $j$. Let us therefore consider the sets of links $D_i = \{ v\to w \mid v \to w \in \pi, \pi \in \Pi_i \} $. Note that for any two distinct $i,j$ the assumption $\Pi_i \neq \Pi_j$ are systems of non-intersecting paths implies that also the corresponding sets of edges are different, i.e. $D_i \neq D_j$. This implies that the monomial $x$ has a non-zero coefficient only in the polynomial $p^{(\Pi_1)}Q_1$. This allows us to conclude that the rational function (\ref{eq: rational function path systems}) is indeed non-zero. 
\end{proof}

\section{Auxiliary Lemma for proving the LF-HTC identifiability result}

Suppose $\G$ is a time series graph with exogenous latent structure. In the main part of the paper we introduced $\R^\D_\stable$ as the space of SVAR parameters that are compatible with $\G$ and give rise to process which can be represented as an SEP in the frequency domain. The following Lemma is used in the main part of the paper to prove the spectral version of the latent factor half-trek identifiability result. The Lemma is closely related to Lemma 1 in the supplementrary material of \cite{foygel2012half}.

\begin{lemma}\label{lemma: minimal lf half-trek}
    Suppose $X,Y \subset O $ are disjoint subsets and let $T' =(\tau_1, \dots, \tau_n ) \in \treks(X,Y)$ be a system of latent factor half-treks without sided intersection. Then there exists a sub-system $T=(\tau_1, \dots, \tau_n) \in \treks(X,Y)$ of latent factor half-treks together with an ordering of the elements in $X$ and $Y$ and $T$ such that: 
    \begin{enumerate}
        \item $ \tau_i \in \treks(x_i, y_i)$ a latent factor half-trek such that $\tau_i$ goes through $x_j$ only if $ i \geq j$ and $x_i$ appears on $\tau_i$ exactly once 
        \item the associated subgraph $G(T)$ is acyclic.
    \end{enumerate}
    For such $T$ and associated parameter $\Phi \in \R^{\D(T)}_\stable$ it holds that 
    \begin{align}\label{eq: spectrum minimal trek}
        \det([\sd_{\G(T)}(\Phi, \Omega)]_{X,Y}) = \sd^{(T)}(\Phi, \Omega)
    \end{align}   
\end{lemma}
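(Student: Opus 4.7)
My plan proceeds in three stages: (i) construct the sub-system $T$ together with the orderings on $X$ and $Y$; (ii) verify that $G(T)$ is acyclic; (iii) derive the determinant identity (\ref{eq: spectrum minimal trek}) via the spectral Gessel-Viennot formula (\ref{eq: spectral sum of trek functions}) applied on $\G(T)$.

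For (i), I would adapt the argument of Lemma~1 in the supplementary material of \cite{foygel2012half} and induct on $n = |X|$. The base case $n=1$ is immediate. For the inductive step, the goal is to identify $x_1 \in X$, $y_1 \in Y$ and a latent factor half-trek $\tau_1 \in \treks(x_1, y_1)$ obtained as a sub-half-trek of some trek in $T'$ such that $\tau_1$ does not pass through any other element of $X$. Consider the auxiliary digraph on $X$ with an edge $x \to x'$ whenever the right side of the current trek starting at $x$ passes through $x'$; a sink in this digraph is exactly the $x_1$ we seek. The key observation is that if the right side of $\tau'_x$ passes through another $x' \in X$, then $\tau'_{x'}$ must be of latent-factor form $x' \leftarrow l' \to \cdots \to y_{x'}$, because otherwise $x'$ would lie on two right-hand sides of $T'$, contradicting the no-sided-intersection hypothesis. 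In that case, the sub-right of $\tau'_x$ from $x'$ to $y_x$ is itself a (Case A) latent factor half-trek from $x'$ to $y_x$ that avoids $x$. One can then re-pair the system so that this sub-right replaces $\tau'_{x'}$ (now re-paired to $y_x$), while a truncated portion of $\tau'_{x'}$ provides the trek for the $X$-node that used to be paired with $y_x$. A monotone potential such as the total number of $X$-nodes appearing on the right-hand sides (excluding starting points) strictly decreases under such re-pairings, so the procedure terminates; at termination a sink $x_1$ exists. Declaring $x_1, y_1, \tau_1$ accordingly and truncating the remaining treks yields a sub-system in $\treks(X\setminus\{x_1\}, Y\setminus\{y_1\})$ without sided intersection, and the induction hypothesis delivers the orderings on the remainder.

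For (ii), suppose for contradiction that $G(T)$ contains a directed cycle $\zeta$. By Assumption~\ref{assumption: latent structure}, every edge of $\zeta$ lies in $D_O$. Since the left side of each trek in $T$ consists of at most one edge $l \to x_i$ with $l \in L$, every edge of $\zeta$ must appear on some right side of $T$. Those right sides are pairwise non-intersecting directed paths, so $\zeta$ would have to sit inside a single right side, which is impossible for a directed path. With $G(T)$ acyclic, (\ref{eq: spectral sum of trek functions}) applied to $\G(T)$ yields
\[
    \det([\sd_{\G(T)}(\Phi, \Omega)]_{X,Y}) = \sum_{T'' \in \treks_{G(T)}(X,Y)} \sgn(T'') \, \sd^{(T'')}(\Phi, \Omega).
\]
For (iii) it then remains to show $\treks_{G(T)}(X,Y) = \{T\}$, which I would argue by induction on $i$: by condition~1, $x_1$ appears in $T$ only on $\tau_1$, so the only edges of $G(T)$ incident to $x_1$ are those of $\tau_1$, forcing any non-intersecting trek system on $G(T)$ sending $x_1$ to $Y$ to use $\tau_1$ and match $x_1$ with $y_1$. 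Stripping $\tau_1$ reduces to the analogous problem on $X \setminus \{x_1\}, Y \setminus \{y_1\}$ within the corresponding sub-graph, and the induction closes. Since the pairing $x_i \mapsto y_i$ is the identity permutation, $\sgn(T) = +1$ and the right-hand side collapses to $\sd^{(T)}(\Phi, \Omega)$.

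The main obstacle is step (i): the re-pairing argument has to be made rigorous by verifying that after each swap the resulting system remains in $\treks(X, Y)$ (both the left and the right sides remain pairwise non-intersecting) and that the chosen potential strictly decreases so the procedure is guaranteed to terminate. Acyclicity of the process graph $G$ is used implicitly throughout to ensure that sub-paths are well defined and that no latent is downstream of an observed process. Step (ii) is a short contradiction using exogeneity of the latents, and step (iii) is a straightforward combinatorial consequence of the rigid structure imposed on $T$ in step (i).
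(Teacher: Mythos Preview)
Your steps (i) and (ii) match the paper's approach: the paper likewise defers the construction of $T$ and the ordering to Lemma~1 in the supplement of \cite{foygel2012half}, and acyclicity follows for the same structural reasons you give. The discrepancy is in step (iii), and it is a genuine gap.

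Condition~1 says ``$\tau_i$ goes through $x_j$ only if $i \geq j$''. This tells you that $\tau_1$ visits no element of $X$ other than $x_1$; it does \emph{not} say that $x_1$ appears only on $\tau_1$. In fact $x_1$ may lie on every $\tau_i$, whereas it is $x_n$ that is guaranteed to appear on $\tau_n$ alone (since $\tau_i$ can visit $x_n$ only when $i\geq n$). Your assertion ``by condition~1, $x_1$ appears in $T$ only on $\tau_1$'' is therefore false, and the induction has to run \emph{downward} from $i=n$, not upward from $i=1$. This is exactly what the paper does: it first shows that any competing system must have $\tau'_n=\tau_n$, using that $x_n$ is incident to a unique edge of $G(T)$, and then, assuming $\tau'_k=\tau_k$ for all $k>m$, establishes $\tau'_m=\tau_m$.

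Even with the indexing corrected, the ``strip and recurse'' sketch understates what is needed. After fixing $\tau'_n=\tau_n$, the remaining treks $\tau'_m$ still live on all of $G(T)$, not on $G(T\setminus\{\tau_n\})$; the node $x_m$ may occur as an interior vertex $z^l_j$ on some already-fixed $\tau_l=\tau'_l$ with $l>m$, so several edges of $G(T)$ are incident to $x_m$, and one must argue (the paper's Case~2) why the first link of $\tau'_m$ is forced to be the first link of $\tau_m$ rather than the edge $z^l_{j-1}\to x_m$ borrowed from $\tau_l$. That case analysis, exploiting the no-sided-intersection constraint against the already-determined $\tau'_k$ for $k>m$, is where the real content of the lemma lies---not step~(i), which both you and the paper hand off to \cite{foygel2012half}.
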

\begin{proof}
    Using the same procedure as in the proof of Lemma 1 in the supplementary material of \cite{foygel2012half}, one can reduce the system $T'$ to a system of latent factor half-treks $T\in \treks(X,Y)$ with property 2.) and furthermore reorder the elements in $X$, $Y$ and $T$ so that 1.) holds. 

    To show the equation (\ref{eq: spectrum minimal trek}) we need to show that $\treks_{G(T)}(X,Y) = \{ T\}$. Let $T' = (\tau_1', \dots, \tau_n')$ be a trek system without sided intersection from $X$ to $Y$ such that $\tau'_i \in \treks(x_i, y_{\alpha(i)})$, where $\alpha$ is a permutation on $\{1, \dots, n \}$. We need to show that $T' = T$. 

    Each $\tau_k$ is a latent factor half-trek, so it has one of the following two forms 
    \begin{align*}
        x_k \leftarrow l_k \to z_1^{k} \to \cdots z^k_{I_k} =y_k && x_k \to z_1^{k} \to \cdots z^k_{I_k} =y_k,
    \end{align*}
    where $l_k \in L$ and $z_j^k \in O$. Each $x\in X$ has at most one outgoing link on $G(T)$, this follows from the fact that $T$ has no side intersection and $G(T)$ is acyclic.
    
    We start by showing that $\tau_n' = \tau_n$. 
    To construct a trek $\tau'_n$ starting at $x_n$ we need an edge in $D(T)$ that either points to $x_n$ or emerges from $x_n$. By assumption $x_n$ appears only on $\tau_n$ and on $\tau_n$ the vertex $x_n$ appears exactly once. So there is exactly one edge involving $x_n$, which means that the first link on $\tau_n'$ must be the first link on $\tau_n$. 
    
    If the first link is $x_n \leftarrow l_n$, then the second link on $\tau_n'$ cannot be $l_n \to x_n$ again, because $X$ and $Y$ are disjoint and there is no path from $x_n $ to any $y\in Y$ on $G(T)$. Since $l_n$ has exactly two emerging links on $G(T)$, the second link on $\tau_n'$ must be $l_n \to z_1$. In particular, if $z_1 \in Y$, then $z_1 = y_n$. If $z_1 \neq y_n$, then there can only be one edge coming out of $z_1$ on $G(T)$, since $T$ has no sided intersection. So the next edge on $\tau_n'$ is $z_1 \to z_2$. This reasoning can be repeated until $y_n$ is reached. The case where $\tau_n = x_n \to z_1 \to \cdots$ works analogously. So, we conclude that $\tau_n' = \tau_n$.  
    
    Now suppose we have shown that $\tau_k = \tau_k'$ for all $m < k \leq n$. We now want to show that $\tau_m' = \tau_m$. By the assumption on $T$, the node $x_m$ can only appear on $\tau_j$ with $j \geq m$. To construct a trek starting at $x_m$ we need either a link pointing to $x_m$ or a link coming from $x_m$. 

    \textsc{Case 1:} If $x_m$ only appears on $\tau_m$, then we can repeat the same argument as above to show that $\tau_m' = \tau_m$.

    \textsc{Case 2:} The node $x_m$ also appears on some $\tau_l = \tau_l'$ with $l > m$, i.e. $x_m = z^l_j$ for a $j \in \{2, \dots, I_l -1 \}$, where $l$ and $j$ are unique, since $T$ is without sided intersection and $G(T)$ is acyclic. In this case there can be only one edge on $G(T)$ starting from $x_m$, i.e. $x_m = z^l_j \to z^l_{j+1}$. By assumption, this edge already occurs at $\tau_l'= \tau_l$. So the first edge on the path $\tau_l'$ must point to $x_m$ (otherwise $\tau_m'$ would have a sided intersection with $\tau_l'$). There are now exactly two links we can choose to start $\tau'_m$, i.e. $x_m \leftarrow l_m$ or $z \to z_j^l$, where $z$ is either $z_{j-1}^l$ or $l_l$ or $x_l$. We now need to show that the first link on $\tau'_m$ is indeed $x_m \leftarrow l_m$. Once we have shown that, we can proceed as above to prove that $\tau_m' = \tau_m$, since $\tau_m'$ cannot be of the form $x_m \leftarrow l_m \to x_m$ (then $\tau_m'$ would intersect with $\tau_l'$).

    Suppose $j > 1$ and also that there are some $1 \leq i < j$ such that $z_i^l = x_q \in X$. Otherwise, a trek $\tau'_m$ where the first link was $x_m \leftarrow z$ could not reach a point in $Y$, contradicting $T' \in \treks(X,Y)$. 

    So let $i^\ast = \min\{ i \mid i < j, z^l_i \in X\}$ and $z_{i^\ast}^l = x_q$, so the first link on $\tau_q$ is $x_q \leftarrow l_q$. In this case $\tau_m' = (x_m \leftarrow \cdots \leftarrow x_q \leftarrow \cdots)$. Given that $\tau_m'$ ends in $Y$, there are two options for $\tau'_m$, namely
   \begin{align}
        x_m \leftarrow \cdots x_q \leftarrow l_q \to x_q \label{option 1} \\
        x_m \leftarrow \cdots x_q \leftarrow l_q \to z_1^q \label{option 2} 
   \end{align}
   The trek $\tau_m'$ cannot be of the form (\ref{option 2}), since this would result in an intersection with $\tau_l$. Nor can $\tau_m'$ be of the form (\ref{option 1}), because that would mean that either $\tau_m'$ or $\tau_l'$ would have a sided intersection with the trek $\tau_q'$ (i.e. the trek in $T'$ starting at $x_q$). Since this argument can be repeated for every $i<j$ such that $z_i^l \in X$, we conclude that the first link on $\tau_m'$ is indeed $x_m \leftarrow l_m$. As mentioned above, this implies that $\tau_m' = \tau_m$. 

    This concludes the proof for $\treks(X,Y)= \{T \}$ implying equation (\ref{eq: spectrum minimal trek}). 
\end{proof}

\bibliographystyle{plain}
\bibliography{paper-ref}
\end{document}